\tikzset{
  pics/carc/.style args={#1:#2:#3}{
    code={
      \draw[pic actions] (#1:#3) arc(#1:#2:#3);
    }
  }
}
\theoremstyle{theorem}
\newtheorem{defn}{Definition}[section]
\newtheorem*{theorem*}{Theorem}
\theoremstyle{definition}
\newtheorem{remark}[defn]{Remark}
\newtheorem{example}[defn]{Example}
\newtheoremstyle{parag}
  {\topsep}   
  {\topsep}   
  {}  
  {}       
  {\bfseries} 
  {}         
  { } 
  {}          
\theoremstyle{parag}
\newtheorem{pp}[defn]{}
\newtheoremstyle{rem}
  {\topsep}   
  {\topsep}   
  {\it}  
  {}       
  {\bfseries} 
  {}         
  { } 
  {}          
\theoremstyle{rem}
\theoremstyle{plain}
\newtheorem{prop}[defn]{Proposition}
\newtheorem{lemma}[defn]{Lemma}
\newtheorem{theorem}[defn]{Theorem}
\newtheorem{corollary}[defn]{Corollary}
\newcommand{\nc}{\newcommand}
\nc{\sU}{\mathcal{U}}
\nc{\sC}{\mathcal{C}}
\nc{\D}{\mathcal{D}}
\nc{\sF}{\mathsf F}
\nc{\sV}{\mathsf V}
\nc{\sS}{\mathcal{S}}
\nc{\sv}{\mathsf{v}}
\nc{\sw}{\mathsf{w}}
\nc{\CC}{\mathbb C}
\nc{\RR}{\mathbb R}
\nc{\NN}{\mathbb N} 
\nc{\ZZ}{\mathbb Z} 
\nc{\HH}{\mathbb H}
\nc{\PP}{\mathbb P} 
\nc{\OO}{\mathcal{O}} 
\nc{\KK}{\mathcal{K}}
\nc{\II}{\mathcal{I}}
\nc{\del}{\partial}
\nc{\delbar}{\overline{\partial}}
\nc{\isom}{\cong}
\nc{\E}{\mathbb{E}}
\nc{\e}{\mathbf{e}}
\nc{\Gg}{\mathfrak{g}}
\nc{\Tt}{\mathfrak{t}}
\nc{\bb}[1]{\mathbb{#1}}
\nc{\tensor}{\otimes}
\nc{\ul}{\underline}
\nc{\ol}{\overline}
\newcommand{\mc}{\DeclareMathOperator}
\mc{\End}{\mathrm{End}}
\mc{\Hom}{\mathrm{Hom}}
\mc{\Ann}{\mathrm{Ann}}
\mc{\Aut}{\mathrm{Aut}}
\mc{\Pic}{\mathrm{Pic}}
\mc{\Sym}{\mathrm{Sym}}
\mc{\rk}{\mathrm{rk\,}}
\mc{\so}{\mathfrak{so}}
\mc{\Dol}{\mathrm{Dol}}
\mc{\hol}{\mathrm{hol}}
\mc{\tot}{\mathrm{tot}}
\mc{\MM}{\mathcal{M}}
\mc{\X}{\mathcal{X}}
\mc{\Y}{\mathcal{Y}}
\mc{\Ext}{\mathrm{Ext}}
\mc{\At}{\mathit{At}}
\mc{\Res}{\mathrm{Res}}
\nc{\IP}[1]{\!\left<{#1}\right>}
\nc{\sL}{\mathcal{L}}
\nc{\Tr}{\mathrm{Tr}}
\nc{\compressed}{compressed}
\nc{\id}{\matheuler{I}}
\newcommand{\blowup}[2]{\mathsf{Bl}_{#2}(#1)}
\newlength{\leftside}\setlength{\leftside}{2.5cm}
\newlength{\rightside}\setlength{\rightside}{2.5cm}
\definecolor{lightgr}{rgb}{.8,.8,.8}
\definecolor{urlcolor}{rgb}{.1,.1,.4}
\definecolor{blackblue}{rgb}{.1,.1,.3}
\definecolor{tocolor}{rgb}{.1,.1,.5}
\definecolor{urlcolor}{rgb}{.2,.2,.6}
\definecolor{linkcolor}{rgb}{.1,.1,.6}
\definecolor{citecolor}{rgb}{.6,.2,.1}
\definecolor{remcolor}{rgb}{.6,.2,.2}
\definecolor{blue}{rgb}{0,0,.6}
\date{}
\author{
\noindent
  Marco Gualtieri 
  \quad
  Songhao Li
  \quad
  \'Alvaro Pelayo
  \quad
  Tudor S. Ratiu
}
\newcommand{\addresses}{
	\bigskip

	\noindent Marco Gualtieri \qquad \texttt{mgualt@math.toronto.edu}\\
	\footnotesize{\textsc{
	Department of Mathematics, University of Toronto \\
	Bahen Center, 40 St. George Street Room 6290 \\
	Toronto, Ontario, M5S 2E4 Canada}}
	
	\bigskip

	\noindent Songhao Li \qquad \texttt{sli@math.wustl.edu}\\
	\footnotesize{\textsc{
	Department of Mathematics, Washington University in Saint Louis \\
	1 Brookings Drive \\
	St. Louis, MO 63130-4899, USA}}

	\bigskip

	\noindent \'Alvaro Pelayo \qquad \texttt{alpelayo@math.ucsd.edu}\\
	\footnotesize{\textsc{
	Department of Mathematics, University of California, San Diego \\
	9500 Gilman Drive\\
	La Jolla, CA 92093-0112, USA}}

	\bigskip

	\noindent Tudor S. Ratiu \qquad \texttt{tudor.ratiu@epfl.ch}\\
	\footnotesize{\textsc{
	Department of Mathematics, Jiao Tong University \\
	800 Dongchuan Road\\
	Minhang, Shanghai, 200240 China \\
	and \\
	Section de Math\'ematiques, 
	\'Ecole Polytechnique F\'ed\'erale de Lausanne\\
	Station 8, CH-1015 Lausanne, Switzerland}}
}
\begin{document}
\title{
\vspace{-3em}
\sffamily{The tropical momentum map:\\ a classification of toric log symplectic manifolds}}

\maketitle
\renewcommand{\abstractname}{\vspace{-8ex}} 
\abstract{
  We give a generalization of toric symplectic geometry to 
  Poisson manifolds which are symplectic away from a
  collection of hypersurfaces forming a normal crossing configuration.  
  We introduce the tropical momentum map, which takes values in a 
  generalization of affine space called a log affine manifold.  
  Using this momentum map, we obtain a complete 
  classification of such manifolds in terms of decorated log affine polytopes, hence extending the classification of symplectic toric
manifolds achieved by Atiyah, Guillemin-Sternberg, Kostant, and Delzant.}
\vspace{-3em}\tableofcontents
\vspace{2em}
\vfill
\pagebreak
\section{Introduction}

Toric symplectic geometry was revolutionized in the 1970s and 1980s
by Atiyah, Guillemin-Sternberg, Kostant, and Delzant \cite{MR642416,MR664117,MR0364552,MR984900}, who essentially proved that toric symplectic manifolds are encoded combinatorially by
a rational polytope, which is the image of the classical momentum map associated to the toric action (this is usually referred to as the \emph{Delzant correspondence}). 
In the present
paper, we present a generalization of toric symplectic geometry to 
a class of Poisson manifolds, called \emph{log symplectic manifolds}, which are generically symplectic but degenerate along a normal crossing configuration of smooth hypersurfaces.   
 We give a classification analogous to the one given by the aforementioned
authors, but incorporating new invariants coming from the degeneracy loci.  

\vspace{.3ex}
The study of toric log symplectic manifolds was initiated in the 
paper of Guillemin, Miranda, Pires, and Scott~\cite{Guillemin3}, who developed an extension of the Delzant correspondence in the case where the degeneracy locus  
of the Poisson structure is a smooth hypersurface. 
Degeneracy loci for Poisson structures of interest in Lie theory or in the study of moduli problems are, however, often highly singular (see, for example, \cite{MR3100779,Pym:2015fk}). In this paper, we consider the mildest possible class of singularities: normal crossing hypersurfaces. This class is particularly interesting, as it is simple enough to allow for the presence of toric symmetries, yet is extremely rich in  examples and in combinatorial structure, in comparison to the nonsingular case.
The normal crossing condition on the degeneracy locus is also natural by analogy with the study of algebraic varieties, where Hironaka's theorem implies that any smooth variety may be realized as the complement of a normal crossing divisor in a smooth compactification.

\vspace{.3ex}
To make progress on the normal crossing case, 
it is necessary to reevaluate the nature of the momentum map itself, and to investigate the geometric structure present on its image, which is the direct analog of the rational polytope of Atiyah et al. mentioned above.
We show that a global \emph{tropical momentum map} may be defined, but the geometry and topology of its codomain is significantly more involved than in the case, where the degeneracy locus is smooth (or empty). In the classical case, the momentum image is contractible, while in our case there is no a priori constraint on its topology.   In Example~\ref{gen1}, for instance, we obtain a momentum image diffeomorphic to a surface of genus 1 with  boundary, corresponding to a log symplectic 4-manifold which, interestingly, admits no symplectic structure due to the vanishing of its Seiberg-Witten invariants.

\vspace{.3ex}
In order to use the tropical momentum map effectively to achieve a classification, we introduce several ideas from other fields, such as the Mazzeo-Melrose decomposition for manifolds with corners, the concept of free divisors in algebraic geometry, and the notion of tropicalization from tropical geometry. Indeed, the codomains for tropical momentum maps are assembled
from elementary pieces, which are partial compactifications of
affine spaces closely related to the extended tropicalizations of toric varieties defined by Kajiwara~\cite{MR2428356} and Payne~\cite{MR2511632}. 
In fact, our work was initially motivated by the problem of defining 
momentum maps for Caine's examples of real Poisson structures on complex toric varieties~\cite{MR2859234}. Such a variety may be blown up along its circle
fixed point loci to produce a log symplectic manifold with corners, 
and in this way we may identify its tropical momentum map with its
tropicalization morphism.

\vspace{.3ex}
By carefully selecting these tools, we are able to reach the classification with a minimum of technical fuss.  This is a simplified restatement of our main result, which appears with full generality and detail as Theorem~\ref{delzant} in Section~\ref{sec5}: 

\begin{theorem*}[Classification of Hamiltonian toric log symplectic  manifolds]
There is a one-to-one correspondence between equivariant isomorphism classes of oriented compact connected toric Hamiltonian log symplectic $2n$--manifolds and equivalence classes of pairs $(\Delta, M)$, where $\Delta$ is a compact convex log affine polytope of dimension $n$ satisfying the Delzant condition and $M$ is a principal $n$-torus bundle over $\Delta$ with vanishing obstruction class.
\end{theorem*}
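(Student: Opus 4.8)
The plan is to construct two mutually inverse assignments --- a \emph{momentum} assignment sending a toric Hamiltonian log symplectic manifold $(X^{2n},\pi)$ to a pair $(\Delta,M)$, and a \emph{reconstruction} procedure building $(X,\pi)$ from $(\Delta,M)$ --- and then to check that the two composites are the identity on equivalence classes, the nontrivial half being a uniqueness (equivariant rigidity) statement.

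Given $(X,\pi)$ with its effective Hamiltonian $T^{n}$-action and tropical momentum map $\mu\colon X\to B$ into a log affine manifold, I would set $\Delta:=\mu(X)$. That $\Delta$ is a compact convex log affine polytope satisfying the Delzant condition follows from compactness of $X$, the convexity theorem for the tropical momentum map, and the equivariant local normal form along the isotropy strata; the $GL(n,\ZZ)$ ambiguity in the log affine structure corresponds to the choice of identification $T^{n}\cong(\RR/\ZZ)^{n}$. To extract $M$, I would use the Mazzeo--Melrose decomposition near the degeneracy locus together with the Delzant and $b$-symplectic local models near the faces: over the relative interior of each open stratum of $\Delta$ the action is locally free modulo a fixed subtorus, and quotienting out precisely the ``free'' directions glues these quotients into a single principal $n$-torus bundle $M\to\Delta$. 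Its class in the relevant cohomology group of $\Delta$ is killed by the toric log obstruction because $X$ realizes $M$ as the torus bundle underlying a log symplectic manifold. That equivariant Poisson isomorphisms induce equivalences of pairs, and conversely, is then a naturality check.

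For the reconstruction, given $(\Delta,M)$ I would build $X$ from local pieces over the strata. Over the open dense stratum $\Delta^{\circ}$ --- the complement of the boundary faces and of the degeneracy hyperplanes --- the affine structure together with $M|_{\Delta^{\circ}}$ determines a canonical fiberwise-invariant Arnold--Liouville (cotangent-type) symplectic form, and the vanishing of the toric log obstruction class is exactly what permits this form to be defined over all of $\Delta^{\circ}$. One then glues in the elementary models attached to the tropical domains: a Delzant symplectic cut along each boundary face (smoothness of the cut guaranteed by the Delzant condition), the $b$-symplectic local model along each degeneracy hyperplane, and products of these along the normal crossing corners; a Moser/partition-of-unity patching produces a global log symplectic form carrying a Hamiltonian $T^{n}$-action with $\mu(X)=\Delta$. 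I expect the main difficulty here: one must show that the tropical-domain local pieces are unique up to equivariant log symplectomorphism, so that the gluing is canonical, and --- crucially --- that the toric log obstruction class is the \emph{complete} obstruction, so that no invariant beyond $(\Delta,M)$ sneaks into the construction. This is the genuinely new feature relative to the classical Delzant correspondence, where $\Delta$ alone suffices.

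Finally, uniqueness: two manifolds $X_{1},X_{2}$ with the same $(\Delta,M)$ are equivariantly Poisson-isomorphic. I would first produce an equivariant diffeomorphism $X_{1}\to X_{2}$ by matching the torus bundle $M$ over $\Delta$ and the equivariant local normal forms over each stratum, then run an equivariant log-Moser deformation: the two log symplectic forms agree to the required order along the degeneracy locus and carry the same momentum data, hence are cohomologous in equivariant log (that is, $b$-) de Rham cohomology, with a primitive that may be chosen equivariant, tangent to every stratum of the normal crossing configuration, and supported near the corners. Integrating the associated time-dependent vector field yields the desired equivariant Poisson isomorphism. Keeping the Moser homotopy simultaneously equivariant, stratum-preserving, and controlled at the corners is the technical heart of this step, and it relies on the vanishing and functoriality properties of the log cohomology groups established earlier.
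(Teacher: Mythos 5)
Your overall architecture (forward assignment, reconstruction by cutting/gluing local models, uniqueness by an equivariant Moser argument) matches the shape of the paper's proof, but two steps that you treat as available off the shelf are in fact the places where the real work happens, and as written they do not go through. First, the construction of the principal $n$-torus bundle $M\to\Delta$: over the interior of a codimension-$k$ nonsingular face the fibre of $\mu$ is an orbit $T^n/T^k$, not a full torus, so ``quotienting out the free directions'' cannot produce a principal $T^n$-bundle over all of $\Delta$ --- there is nothing left to quotient and the fibres are the wrong size near the boundary. The paper's solution is the \emph{symplectic uncut}: a compressed (real oriented, then boundary-compressed) blow-up of $M$ along the codimension-$2$ circle fixed-point sets, which replaces the collapsed orbits by full torus orbits and yields a genuine principal bundle $\widetilde M\to\Delta$ carrying an induced log symplectic form; the original manifold is then recovered by symplectic cutting $\widetilde M$ along the nonsingular faces, and the classification of such principal toric log symplectic bundles (Theorem~\ref{nontrivprinc}) is what identifies the invariants as the Chern classes of $\widetilde M$ subject to the vanishing of the toric log obstruction class, with a residual translation by $H^2(\Delta,\log D)$ that your sketch also needs to account for.

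Second, your appeal to ``the convexity theorem for the tropical momentum map'' is circular: no such theorem is available in this setting, and indeed the momentum image can have nontrivial topology (Example~\ref{gen1} produces a genus-one image), so convexity is only a statement about the intersection of $\Delta$ with each chamber of the complement of the degeneracy divisor, and even formulating ``convex log affine polytope'' requires first exhibiting $\Delta$ inside a tropical welded space. The paper does this by building the ambient space from tropical domains attached to the components of $M\setminus Z$ (with fans generated by the residues $v_i=i_\rho\Res_{Z_i}(\omega)$) and then constructing a developing map $\iota:\Delta\to X$ by path integration of $\xi$; path-independence of this map is exactly where the Hamiltonian hypothesis (trivial affine monodromy) enters, and injectivity follows from compactness via a covering-space argument. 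Your sketch never uses the Hamiltonian/trivial-monodromy hypothesis, which is a sign that this essential step is missing: without it the ``polytope'' need not embed in any tropical welded space and the classification fails.
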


Placing our work in the larger context of the study of integrable systems (see, for example \cite{MR2801777}), this theorem provides a classification of a large family of toric integrable systems in which the base of the system is a manifold whose integral affine structure  is allowed to degenerate in a controlled manner along a stratification. The results of this paper provide further evidence that tools from Lie algebroid theory may be fruitfully exploited to understand and classify the singular behavior of the affine structure on the base of integrable systems, and therefore to further our ability to classify integrable systems at large.

\vspace{.3ex}
The structure of the paper is as follows: In Section~\ref{sec2}, we define log affine manifolds and provide a method 
for constructing a great variety of examples by a welding procedure. In Section~\ref{sec3}, we define toric log symplectic manifolds and classify those with principal torus actions.  In section~\ref{cuts}, we describe the analogue of the Delzant polytope and provide several examples, including the key example~\ref{gen1}.  Finally, in Section~\ref{sec5}, we establish 
the correspondence between log affine polytopes and toric log symplectic manifolds.

\vspace{1em}
\noindent {\it Acknowledgements:}

\noindent We thank Andrew Dancer, Matthias Franz, Eckhard Meinrenken, David Speyer, and Rafael Torres for many insightful remarks. M.G. is partially supported by an NSERC Discovery Grant. A.P. is partially supported by NSF Grant DMS-1055897 and DMS-1518420, the STAMP Program at the ICMAT research institute (Madrid), and ICMAT Severo Ochoa grant Sev-2011-0087. T.S.R. is partially supported by  Swiss National Science Foundation grant NCCR SwissMAP. Finally, we are very grateful to an anonymous referee for 
comments which have improved the paper.

\pagebreak
\section{Notation and conventions}
\begin{enumerate}

\item
{\it Manifolds with corners:} Throughout the paper, manifolds 
have boundary and corners.  We use the definition
in which each local chart has codomain given by an intersection of coordinate half-spaces $x_i \geq 0$, in 
standard coordinates $(x_1,\ldots, x_n)$ on $\RR^n$.

\item
{\it Lie algebroids:} A Lie algebroid is a vector bundle $A$ over the manifold $X$, together with a Lie bracket $[\cdot,\cdot]$ on its smooth sections and a morphism of vector bundles $\varrho:A\to TX$, 
called the \emph{anchor}, which is bracket-preserving and satisfies the Leibniz rule 
\[
[a,fb] = f[a,b] + (\varrho(a)f)b,
\] 
for all sections $a,b$ of $A$ and $f\in C^\infty(X,\RR)$.

\item
{\it Action Lie algebroid:}
 Given an infinitesimal action of a Lie algebra $\mathfrak{g}$ on
 a manifold $X$, i.e.,, a Lie algebra homomorphism $\rho: \mathfrak{g}
 \rightarrow C^\infty(X,TX)$, the \emph{action Lie algebroid} 
 $\mathfrak{g}\ltimes_\rho X$ is the trivial bundle $\mathfrak{g} \times X\to X$, equipped with the anchor map $\varrho$ defined by 
 $\varrho(a) = (x, \rho(a)(x))$ and
 Lie bracket on sections given by the unique extension of the 
 Lie bracket on constant sections which satisfies the Leibniz rule.
 
\item
{\it Free divisors:} 
A \emph{free divisor} is defined to be a union of smooth closed 
hypersurfaces (i.e., real codimension one submanifolds) with the property
that the sheaf of vector fields tangent to all hypersurfaces 
is locally free.  Given a free divisor $D$ on the $n$-manifold $X$, we let $TX(-\log D)$ denote the rank $n$ vector bundle defined by the above sheaf. It is a Lie algebroid, with Lie bracket inherited from the Lie bracket of vector fields and anchor map defined by the natural inclusion of sheaves.  Its dual bundle is denoted by $T^*X(\log D)$ and the Lie algebroid de Rham complex is usually written as $(\Omega^\bullet(X,\log D), d)$;
it is called the \emph{logarithmic de Rham complex} (See Appendix~\ref{Mazzeo-Melrose}).

\item 
{\it Normal crossing divisors:}
A convenient subclass of free divisors are
  those with only simple normal crossing singularities, meaning that for
  each point $x\in D$, there 
is a chart $(U, \varphi)$ of $X$, $x\in U$, such that $\varphi(D)$ is
a union of a subset of the coordinate hyperplanes in $\RR^n$ intersected
with $\varphi(D)$. The \emph{smooth locus} of $D$ is defined to be the
set of points in $D$ which lie only on a single hypersurface of $D$. The
\emph{singular locus} of $D$ is the complement in $D$ of the smooth 
locus.

\end{enumerate}

\pagebreak
\section{Log affine manifolds}\label{sec2}
\begin{pp}
  A toric Hamiltonian symplectic manifold is equipped with a momentum
  map to the dual of the Lie algebra of the torus: the affine space
  $\Tt^*$.  When generalizing the theory to log symplectic forms, one
  is led to replace the codomain $\Tt^*$ by a \emph{log affine
    manifold}, possibly with boundary and corners.

  \begin{defn}\label{logafin}
  A \textbf{real log affine $n$-manifold} is a manifold $X$ of real dimension
  $n$, equipped with a free divisor $D$ and an isomorphism $\xi$ of Lie
  algebroids between $TX(-\log D)$ and an abelian action algebroid,
  i.e.,
  \begin{equation}
    \label{eq:26}
    \xymatrix{TX(-\log D)\ar[r]^-{\xi} &  \RR^n \ltimes_\rho X},    
  \end{equation}
  where $\rho:\RR^n\to C^\infty(X,TX)$ is a Lie algebra homomorphism.  We
  call a log affine manifold \emph{complete} when the infinitesimal
  action of $\RR^n$ integrates to a group action of $\RR^n$ on $X$.
  The isomorphism~\eqref{eq:26} may be viewed, equivalently, as a
  closed logarithmic 1-form
  \begin{equation}
    \label{eq:57}
    \xi \in \Omega^1(X, \log D)\otimes \RR^n.
  \end{equation}
  \end{defn}

\end{pp}

\begin{remark}
 If $X$ has boundary or corners and if $D$ is a normal crossing divisor, 
 then in order to be complete log affine, the divisor $D$ must include 
 the normal crossing boundary components of $X$. 
\end{remark}

\begin{remark}
  Saito's criterion~\cite{MR586450} provides a convenient way to
  characterize log affine manifolds.  A simple consequence of the
  criterion is the following: if $(X_1,\ldots, X_n)$ are commuting
  vector fields on the $n$-manifold $X$ which are tangent to a normal
  crossing divisor $D$, and if their determinant
  \begin{equation}
    \label{eq:14}
    X_1\wedge\cdots \wedge X_n \in C^\infty(X,\wedge^n TX)
  \end{equation}
  vanishes precisely on $D$ and transversely on its smooth locus, then
  $TX(-\log D)$ is an action Lie algebroid globally generated by the
  $n$ sections $(X_1,\ldots, X_n)$.
\end{remark}

\begin{pp} Assuming that $D$ is a normal crossing divisor, by the Mazzeo-Melrose decomposition~(\ref{Mazzeo-Melrose}), we have the natural isomorphism
\[
H^1(X,\log D)\otimes\RR^n \cong (H^1(X)\oplus \sum_i H^0(D_i))\otimes\RR^n.
\]
As a result, the logarithmic $1$-form~\eqref{eq:57} defines a class in first cohomology with coefficients in $\RR^n$.
\begin{defn}
The \textbf{affine monodromy} of the log affine manifold $(X,D,\xi)$ is defined to be the component of $[\xi]$ in $H^1(X)\otimes\RR^n$.
\end{defn}

The log affine manifolds considered in this paper, typically have trivial affine monodromy, as a consequence of the assumption of an analogue of the Hamiltonian condition in toric symplectic geometry; see Definition~\ref{torlog}.
\end{pp}

\subsection{The dual cubic complex of a tropical welded space} \label{subsec:cubcx}

\begin{pp}
The class of log affine manifolds, useful in the Delzant classification of Hamiltonian toric log symplectic manifolds, is the collection of 
complete log affine manifolds with corners and with trivial affine monodromy. To simplify the terminology, we give it a name.

\begin{defn}
	A \textbf{tropical welded space} is a complete log affine manifold $(X, D, \xi)$ with corners and with trivial monodromy, where 
	$\partial X \subset D$.
\end{defn}

The justification of the terminology comes from the fact that the log affine structure on the closure of each component of $X \setminus D$ is closely related to the the notion of \emph{extended tropicalization} $\mathbf{Trop}(\mathcal{X})$ of a complex toric variety $\mathcal{X}$ introduced by Kajiwara~\cite{MR2428356} and Payne~\cite{MR2511632} (see also~\cite{MR2289207} for similar ideas). We provide more details on this in \S\ref{sec: CaineToric}. As a consequence, the log affine manifold $X$ may be thought as being assembled from the codomains of the extended tropicalization map, hence the name `tropical welded space'.

It turns out that a tropical welded space $(X, D, \xi)$ is determined by a cubic complex, decorated in a certain way with residue vectors along its edges. The key observation is that the normal crossing condition on $D$ gives rise to a cubic complex which is dual to $(X,D)$ in a precise sense.
\end{pp}

\subsubsection{Labeled simplicial fans}

\begin{pp}
	Before we consider general tropical welded spaces, we first deal with the case when the normal crossing divisor coincides with the boundary, 
i.e., $D = \partial X$. We show that in this case, the space is completely characterized by combinatorial data, namely a labeled simplicial fan, defined as follows.

\begin{defn} \label{blocks}
Let $U$ be a real vector space of dimension $n$ and $\sF$ a finite set 
of vectors in $U$.  Let $\sC$ be a collection of linearly independent subsets $A\subset \sF$, each of which is closed under convex hull, in the sense that the convex hull of $A$ does not meet $\sF\setminus A$. The collection $\sC$ must have the property that if $A\in \sC$ then all subsets of $A$ are also contained in $\sC$.  The pair $\Sigma = (\sF, \sC)$ may then be viewed as a \textbf{labeled simplicial fan}: the fan is the set of all strictly convex cones $\IP{A}_+ \subset U$, $A\in\sC$, where $\IP{A}_+$ denotes the convex hull of $A$, and its 1-dimensional cones are labeled by the corresponding elements of $\sF$.
\end{defn}

 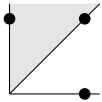
\begin{figure}[H]
    \centering
    \begin{tikzpicture}
      \draw [draw=none,fill=gray, opacity=0.2]
      (0,0) -- (0,1.2) -- (1.2,1.2) -- cycle;
      \draw [draw=none,fill=white, opacity=0.2]
      (0,0) -- (1.2,0) -- (1.2,1.2) -- cycle;
      \draw (0,0) -- (1.2,0);
      \draw (0,0) -- (1.2,1.2);
      \draw (0,0) -- (0,1.2);
      \foreach \Point in {(1,0), (1,1), (0,1)}{
      \draw[fill=black] \Point circle [radius=2pt];
}
    \end{tikzpicture}
    \caption{A labeled simplicial fan with $\sF=\{(1,0),(1,1),(0,1)\}$}\label{fanex}
  \end{figure}
  For example, we may choose a fan labeled by
  $\sF=\{(1,0),(1,1),(0,1)\}$ consisting of one 2-dimensional
  cone and three 1-dimensional cones, as shown in Figure~\ref{fanex}.
	
\begin{prop} \label{prop: semi-local}
	 Let $(X, \partial X, \xi)$ be a $n$-dimensional tropical welded space, where $\partial X$ is the union of a collection $\{D_i\}_{i\in I}$ of smooth boundary components. Let $\xi_i \in \RR^n$ be the residue along the component $D_i \subset \partial X$. Let $\sC$ be the collection of subsets of $\{\xi_i\}_{i\in I}$ such that $\{\xi_{i_1}, \xi_{i_2}, \ldots, \xi_{i_k}\} \in \sC$ if $D_{i_1} \cap D_{i_2} \cap \ldots \cap D_{i_k} \neq \varnothing$. Then $\Sigma = (\{\xi_i\}_{i\in I}, \sC)$ is a labeled simplicial fan.
\end{prop}

\begin{proof}
	We choose a basepoint $x_0 \in X^\circ = X \setminus \partial X$. By completeness and trivial monodromy, we may identify the interior $X^\circ$ with the standard affine space $\RR^n$ via the map
\[
	\RR^n \to X^\circ, \qquad v \mapsto v \cdot x_0,
\]
where $(v, x) \mapsto v \cdot x$ represents the $\RR^n$-action on $X$.

	Now let $x \in D_{i_1} \cap D_{i_2} \cap \ldots \cap D_{i_k}$ and  
	$V$ a neighborhood of $x$, where $\xi$ has the local normal form \eqref{eq:33}. Note that $A = \{\xi_{i_1}, \xi_{i_2}, \ldots, \xi_{i_k}\} \in \sC$ is linearly independent by Proposition~\ref{normlog}.
	
	If $v \in \RR^n$ is a vector in the interior of the cone generated by $\{\xi_{i_1}, \xi_{i_2}, \ldots, \xi_{i_k}\}$, then according to the local normal form in Theorem \ref{umplogaf}, $tv \cdot x_0$ converges to a point in $D_{i_1} \cap D_{i_2} \cap \ldots \cap D_{i_k}$ as $t \to \infty$. Likewise, if $v'$ is a vector in the interior of the cone generated by a different set of vectors $\{\xi_{i'_1}, \xi_{i'_2}, \ldots, \xi_{i'_l}\}$, then $tv' \cdot x_0$ converges to a point in $D_{i'_1} \cap D_{i'_2} \cap \ldots \cap D_{i'_l}$ as $t \to \infty$. This means that the intersection of the interior of the cone generated by $\{\xi_{i_1}, \xi_{i_2}, \ldots, \xi_{i_k}\}$ and the interior of the cone generated by $\{\xi_{i'_1}, \xi_{i'_2}, \ldots, \xi_{i'_l}\}$ is empty, and so $\Sigma = (\{\xi_i\}_{i\in I}, \sC)$ is a labeled simplicial fan.

\end{proof}

\end{pp}

\subsubsection{Partial cubic complexes}

\begin{pp}
The notion of a cubic complex is analogous to that of a simplicial complex, where the building blocks are cubes instead of simplexes. We require a refined notion of a cubic complex, in which a cube need not contain all of its lower-dimensional faces.

For a non-negative integer $k$, a \textbf{$k$-cube} $c$ is the space $[-1, 1]^{k}$ with the convention $[-1, 1]^0 = \{1\}$. A codimension 1 face of $c$ is given by fixing precisely one of the coordinates to be $-1$ or $1$. Other faces of $c$ are obtained by taking intersections of its codimension 1 faces. A face of $c$ is also a cube. For $\ell > k$, let $f$ be a $k$-cube and $c$ an $\ell$-cube. An \textbf{embedding} from $f$ to $c$ is an affine linear map $\varphi_{f,c}$ mapping $f$ isomorphically onto a $k$\--dimensional face of $c$.

\begin{example}
For $\ell > k$, let $f = [-1,1]^k$ and $c = [-1, 1]^\ell$. One possible embedding from $f$ to $c$ is
			$$
				\varphi_{f,c}: [-1,1]^k \to [-1,1]^\ell, \qquad (x_1, \ldots, x_k) \mapsto (x_1, \ldots, x_k, 1, \ldots, 1).
			$$
			All other embeddings from $f$ to $c$ are obtained from this one by permuting the coordinates of the codomain and changing the signs on a subset of these coordinates. For example, the eight maps from $[-1,1]$ to $[-1,1]^2$ send $x$ to $(\pm 1, \pm x)$ and $(\pm x, \pm 1)$, all signs being chosen independently. 
\end{example}

\begin{defn} \label{def: cubcx}
	A \textbf{cubic complex} is a finite collection $\mathsf{C}$ of cubes, together with a finite collection $\mathsf{\Gamma}$ of embeddings between pairs of elements of $\mathsf{C}$, satisfying the following properties:
\begin{enumerate}
	\item for all $c \in \mathsf{C}$, each face of $c$ is the image of a unique $\varphi \in \mathsf{\Gamma}$;
	\item $\mathsf{\Gamma}$ is closed under composition.
\end{enumerate}
A \textbf{partial cubic complex} is a cubic complex $(\mathsf{C}, \mathsf{\Gamma})$, together with the data: for each $k$-cube $c \in \mathsf{C}$, there is a distinguished open subspace $c^\circ \subset c$ where $c^\circ$ is either
$$
	(-1, 1]^{m} \times [-1, 1]^{k-m}, \quad \text{for some} ~ 0 \leq m \leq k,
$$
or the empty set $\varnothing$, for which we require that $\{c^\circ ~|~ c \in \mathsf{C}\}$ is preserved by $\mathsf{\Gamma}$. The \textbf{geometric realization} of the partial cubic complex $(\mathsf{C}, \mathsf{\Gamma})$ is the coproduct
\begin{equation} \label{eq: geomcx}
	|\mathsf{C}| = \coprod_{c \in \mathsf{C}} c^\circ \,/ \sim\,,
\end{equation}
where $x \sim \varphi(x)$ for $\varphi \in \mathsf{\Gamma}$.
\end{defn}

\begin{figure}[H]
    \centering

    \begin{tikzpicture}[scale=1]
\draw[draw=black,fill=lightgray,thick] (-0.5,0.5) -- (-0.5,-0.5) -- (0.5,-0.5) -- (0.5,0.5) -- cycle;
\draw[fill=black] (-0.5,0.5) circle [radius=1.5pt];
\draw[fill=black] (0.5,0.5) circle [radius=1.5pt];
\draw[fill=black] (-0.5,-0.5) circle [radius=1.5pt];
\draw[fill=black] (0.5,-0.5) circle [radius=1.5pt];
    \end{tikzpicture}
    \qquad
    \begin{tikzpicture}[scale=1]
\draw[draw=black,fill=lightgray,thick] (0.5,-0.5) -- (0.5,0.5) -- (-0.5,0.5) -- (-0.5,-0.5);
\draw[black,thick,dashed] (-0.5,-0.5) -- (0.5,-0.5);
\draw[fill=black] (-0.5,0.5) circle [radius=1.5pt];
\draw[fill=black] (0.5,0.5) circle [radius=1.5pt];
\draw[fill=white] (-0.5,-0.5) circle [radius=1.5pt];
\draw[fill=white] (0.5,-0.5) circle [radius=1.5pt];
    \end{tikzpicture}
    \qquad
    \begin{tikzpicture}[scale=1]
\draw[draw=none,fill=lightgray] (0.5,-0.5) -- (0.5,0.5) -- (-0.5,0.5) -- (-0.5,-0.5) -- cycle;
\draw[draw=black,thick] (0.5,-0.5) -- (0.5,0.5) -- (-0.5,0.5);
\draw[draw=black,thick,dashed] (0.5,-0.5) -- (-0.5,-0.5) -- (-0.5,0.5);
\draw[fill=white] (-0.5,0.5) circle [radius=1.5pt];
\draw[fill=black] (0.5,0.5) circle [radius=1.5pt];
\draw[fill=white] (-0.5,-0.5) circle [radius=1.5pt];
\draw[fill=white] (0.5,-0.5) circle [radius=1.5pt];
    \end{tikzpicture}
    \caption{All possibilities of $c^\circ$, except for $\varnothing$, of a 2\--cube $c$}
\end{figure}

For a partial cubic complex $(\mathsf{C}, \mathsf{\Gamma})$, a $0$-cube $v \in \mathsf{C}$, a.k.a. a vertex, is called \textbf{open} if $v^\circ = \varnothing$ and \textbf{closed} if $v^\circ = \{1\}$. When the context is clear, we refer to partial cubic complexes simply as cubic complexes, and we abbreviate $(\mathsf{C}, \mathsf{\Gamma})$ as $\mathsf{C}$. 

Even for the usual cubic complexes, our definition is different from what most authors use in the literature, e.g., Definition 2 in \cite{MR1344739}. For example, two cubes may share more than one face. See also Example \ref{ex: torus}.
\end{pp}

\subsubsection{Admissible decorated cubic complexes}

\begin{pp}\label{pp:dualcubcx}
Let $X$ be a smooth manifold with corners and  $D$ a normal crossing 
divisor that contains $\partial X$. We construct the \textbf{dual cubic complex} $\mathsf{C}_{X, D}$ as follows.

First, we extend $X$ by a collar neighborhood so that the extension $\widetilde{X}$ is a manifold without boundary. We then extend $D$ to a normal crossing divisor $\widetilde{D}$ in $\widetilde{X}$.

Next, using the normal crossing property of $\widetilde{D}$, we construct its dual cubic complex $\mathsf{C}_{\widetilde{X}, \widetilde{D}}$ as follows. Let $\{\widetilde{D}_i\}_{i \in I}$ be the smooth components of $\widetilde{D}$. For $j = 1, \ldots, n$, we define
$$
	\widetilde{D}^j = \bigcup_{i_1, \ldots, i_j \in I} \widetilde{D}_{i_1} \cap \ldots \cap \widetilde{D}_{i_j}.
$$
That is, $\widetilde{D}^j$ is the closure of all codimension $j$ strata $\widetilde{D}_{i_1}, \ldots, \widetilde{D}_{i_j}$ of the stratification of $\widetilde{X}$ defined by $\widetilde D$.  
The vertices of the cubic complex $\mathsf{C}_{\widetilde{X}, \widetilde{D}}$ are defined to be the connected components of  $\widetilde{X} \setminus \widetilde{D}$, and the $j$-dimensional closed cubes are given by the connected components of $\widetilde{D}^{j} \setminus \widetilde{D}^{j+1}$.

Finally, $\mathsf{C}_{X, D}$ is obtained from $\mathsf{C}_{\widetilde{X}, \widetilde{D}}$ by taking the same set of vertices, labeling them as closed or open according to whether they lie in $X\setminus D$ or not, respectively. Note that $\mathsf{C}_{X, D}$ is a partial cubic complex as in Definition \ref{def: cubcx}.

\begin{example}
	Let $X = \{(x, y) \in \RR^2 ~|~ y \leq 0 \}$ be the closed lower half-plane, with normal crossing divisor $D = \{xy = 0\} \subset X$. The corresponding cubic complex $\mathsf{C}_{X, D}$, drawn in Figure~\ref{excc}, is a single partial square $[-1,1] \times [-1,1)$, together with its three edges. The two closed vertices of $\mathsf{C}_{X, D}$ correspond to the two connected components of $X \setminus D$, the three solid edges of $\mathsf{C}_{X, D}$ correspond to the three connected components of $D \setminus \{(0, 0)\}$, and the one square of $\mathsf{C}_{X, D}$ corresponds to the stratum $\{(0, 0)\}$.

\begin{figure}[H]
    \centering
    \begin{tikzpicture}[scale=1]
\draw (-1,0) -- (1,0);
\draw (0,-1) -- (0,0);
      
\draw [draw=none,fill=gray, opacity=0.2]
      (-1,0) -- (1,0) -- (1,-1) -- (-1,-1)-- cycle;
    \end{tikzpicture}
    \qquad
    \begin{tikzpicture}[scale=1]
\draw[draw=none] (-1,0) -- (1,0);
\draw[draw=none] (0,-1) -- (0,0);
\draw[draw=black,fill=lightgray,thick] (-0.5,0.5) -- (-0.5,-0.5) -- (0.5,-0.5) -- (0.5,0.5);
\draw[black,thick,dashed] (-0.5,0.5) -- (0.5,0.5);
\draw[fill=white] (-0.5,0.5) circle [radius=1.5pt];
\draw[fill=white] (0.5,0.5) circle [radius=1.5pt];
\draw[fill=black] (-0.5,-0.5) circle [radius=1.5pt];
\draw[fill=black] (0.5,-0.5) circle [radius=1.5pt];
    \end{tikzpicture}
    \caption{$(X,D)$ and its dual cubic complex.}\label{excc}
\end{figure}
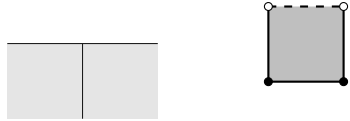

\end{example}

\end{pp}

\begin{pp} \label{pp:admcubcx}

Now let $(X, D, \xi)$ be a tropical welded space. Each smooth component $D_i$ of $D$ has a unique residue vector 
$$
	\xi_i = \mathtt{Res}_i \xi \in U. 
$$
To encode this information in the cubic complex, we note that the 
$1$-dimensional cubes, i.e., the \emph{edges}, are dual to the components 
$D_i$, in such a way that parallel edges are dual to the same $D_i$.  Indeed, each smooth component $D_i$ of $D$ corresponds to a unique equivalence class of parallel edges, where we make the notion of parallel edges into an equivalence relation: the edges $e, e'$ are parallel if 
there is a sequence of edges $(e, e_2,\ldots, e_{n-1}, e')$ such that consecutive edges form parallel edges of a square.  Therefore, we decorate each such equivalence class of edges by the corresponding residue vector $\xi_i$.

Since every connected component $X_j$ of $X \setminus D$ corresponds to a closed vertex of the cubic complex $\mathsf{C}_{X,D}$, and since each edge meeting such a vertex is decorated as above by a residue vector, we see that by Proposition \ref{prop: semi-local}, each closed vertex of $\mathsf{C}_{X,D}$ inherits a labeled simplicial fan. This leads us to the following definition.

\begin{defn}\label{def:admcubcx}
An \textbf{admissible decorated} cubic complex is a partial cubic complex $(\mathsf{C}, \mathsf{\Gamma})$, together with a collection of vectors $\{\xi_i\}_{i\in I}$ in $U=\RR^n$ indexed by the equivalence classes $I$ of parallel edges of $\mathsf{C}$, satisfying the following conditions:
\begin{enumerate}
\item For each cube $c\in\mathsf{C}$ and closed vertex $x\in c$, the set of vectors $A_c = \{\xi_1, \xi_2, \ldots, \xi_k\}$ corresponding to those edges of $c$ meeting $x$ must be linearly independent.

\item For each closed vertex $x$, the collection of sets $\{A_c\}_{x\in c}$ must define a labeled simplicial fan, that is, the residue vectors must define a simplicial fan structure at each closed vertex of the cubic complex.
\end{enumerate}
\end{defn}

\end{pp}

The results of \S\ref{subsec:cubcx} may be summarized as follows:

\begin{prop} \label{prop:admcubcx}
	If $(X, D, \xi)$ is a tropical welded space, then the cubic complex structure $\mathsf{C}_{X, D}$ decorated with the residue vectors $\{\xi_i\}_{i\in I}$ is an admissible decorated cubic complex.
\end{prop}

\subsection{Construction of tropical welded spaces} \label{weld}

\begin{pp}

We now work in the opposite direction. Given an admissible decorated cubic complex $\left( \mathsf{C}, \mathsf{\Gamma}, \{\xi_i\}_{i\in I} \right)$, we construct the corresponding tropical welded space $(X, D, \xi)$. We outline the general ideas before giving details.

For each cube $c \in \mathsf{C}$, we construct a log affine manifold with corners $X_c$. For an embedding $\varphi_{f,c} \in \mathsf{\Gamma}$ of cubes, we construct an embedding $\phi_{f,c}: X_f \hookrightarrow X_c$ of log affine manifolds. That is, we construct a covariant functor from the category of cubes to the category of log affine manifolds with corners. Similar to the geometric realization $|\mathsf{C}|$ as in \eqref{eq: geomcx}, the tropical welded space $(X, D, \xi)$ of the admissible decorated cubic complex $\left( \mathsf{C}, \{\xi_i\}_{i\in I} \right)$ is the coproduct of the log affine manifolds $\{X_c ~|~ c \in \mathsf{C} \}$.

\end{pp}

\subsubsection{The log affine manifold $X_c$ and the embedding $\phi_{f,c}$}

\begin{pp}
Let $\left( \mathsf{C}, \{\xi_i\}_{i\in I} \right)$ be an admissible decorated cubic complex. For a $k$-cube $c \in \mathsf{C}$, let $A_c = \{\xi_1, \xi_2, \ldots, \xi_k\}$ be the set of labeled vectors of its equivalence classes of parallel edges which, by definition, is linearly independent. We first construct a log affine manifold $X_{A_c}$ that contains $X_c$, by the following proposition.

\begin{prop}
	Let $U$ be a real vector space of dimension $n$ and  
$A=\{\xi_1,\ldots, \xi_k\}$ a set of linearly independent vectors in $U$. 
Define the manifold with corners 
\begin{equation} \label{eq: X_A}
	X_A = \left(U\times \RR^k \right)/\,\RR^{k},
\end{equation}
where $\RR^k$ acts via the (free and proper) anti-diagonal action
\begin{equation}
	(s_1,\ldots, s_k)\cdot (x,(\lambda_1,\ldots, \lambda_k)) = (x - \sum_{i=1}^k s_i\xi_i,  (e^{s_1} \lambda_1,\ldots e^{s_k}\lambda_k)).
\end{equation}
The residual $U$-action, given by 
\begin{equation}
	[(x,(\lambda_1,\ldots,\lambda_k))]+u = [(x+u,(\lambda_1,\ldots,\lambda_k))],
\end{equation}
renders $X_A$ into a complete log affine manifold with trivial monodromy. 
\end{prop}

\begin{defn} \label{def: X_c}
Let $\left( \mathsf{C}, \{\xi_i\}_{i\in I} \right)$ be an admissible decorated cubic complex. For a $k$-cube $c \in \mathsf{C}$, where $c^\circ = (-1, 1]^m \times [-1, 1]^{k-m}$ and $A_c = \{\xi_1, \xi_2, \ldots, \xi_k\}$ are the labeled vectors, define $X_c$ by
\begin{equation} \label{eq: X_c}
	X_c = \left(U\times \RR_+^m \times \RR^{k-m} \right)/\,\RR^{k} \subset X_{A_c}\,,
\end{equation}
where $\RR_+ = [0, \infty)$. For $c \in \mathsf{C}$ with $c^\circ = \varnothing$, $X_c = \varnothing$.
\end{defn}

\begin{example} \label{ex: X_c}
	\begin{enumerate}
		\item If $v$ is a closed vertex, then $X_v = U$ with the standard $U$-action on itself.
		\item If $U = \RR^2$ and a square $s$, with $s^\circ = (-1, 1] \times [-1, 1]$, is labeled with $(1, 0)$ and $(1, 1)$, then we have $X_s = \RR^2 \times \left(\RR_+ \times \RR \right)/\,\RR^2$, where the equivalence relation is given by
			$$
			\left( (x_1, x_2), (\lambda_1, \lambda_2) \right) \sim \left( (0, 0), (e^{x_1-x_2}\lambda_1, e^{x_2}\lambda_2) \right).
			$$
			If we fix $x_1 = x_2 = 0$, then $(\lambda_1, \lambda_2)$ becomes a global coordinate chart. The residual $\RR^2$-action is given by
			$$
			\left[(0, 0), (\lambda_1, \lambda_2) \right] + (u_1, u_2) = \left[(0, 0), (e^{u_1-u_2}\lambda_1, e^{u_2}\lambda_2) \right].
			$$
Hence we have
			\begin{equation}
				X_s \cong \RR_+ \times \RR, \qquad \xi_s = \frac{d\lambda_1}{\lambda_1} \otimes (1, 0) + \frac{d\lambda_2}{\lambda_2} \otimes (1, 1).
			\end{equation}
	\end{enumerate}
\end{example}

\end{pp}

\begin{pp}

If $\varphi_{f, c}: f \hookrightarrow c$ is an embedding of cubes, then the embedding $\phi_{f,c}: X_f \hookrightarrow X_c$ is via the standard inclusion of $\RR_+^k$ as an orthant in $\RR^k$. More precisely, we have the following definition.

\begin{defn} \label{def: X_f to X_c}
	Let $f$ and $c$ be cubes of an admissible decorated cubic complex such that $f^\circ = (-1, 1]^{m_f} \times [-1, 1]^{k_f-m_f}$ and $c^\circ = (-1, 1]^{m_c} \times [-1, 1]^{k_c-m_c}$. Let $\varphi_{f, c}: f \hookrightarrow c$ be an embedding of cubes. If we extend
	$$
		\varphi_{f, c}: (-1, 1]^{m_f} \times [-1, 1]^{k_f-m_f} \to (-1, 1]^{m_c} \times [-1, 1]^{k_c-m_c}
	$$
to an affine linear map $\varphi_{f, c}: \RR^{k_f} \to \RR^{k_c}$, then define the induced embedding of log affine manifolds
\begin{equation} \label{eq: X_f to X_c}
	\begin{aligned}
			\phi_{f,c}: & X_f \hookrightarrow X_c, \\
			& \left[x, (\lambda_1,\ldots, \lambda_{k_f}) \right]\mapsto \left[x, \varphi_{f, c} (\lambda_1,\ldots, \lambda_{k_f})\right].
		\end{aligned}
\end{equation}
\end{defn}

\begin{example}
Let $U = \RR^2$ and let $s$ be a square, with $s^\circ = (-1, 1] \times [-1, 1]$, labeled with $(1, 0)$ and $(1, 1)$.
If the closed vertex $v$ is mapped to the point $(1, 1)$ on $s^\circ$, then according to Example \ref{ex: X_c} and \eqref{eq: X_f to X_c}, we have 
$$
	\begin{aligned}
			\phi_{v, s}: & X_v \longrightarrow X_s, \\
			& (x_1, x_2) \mapsto \left[(x_1, x_2), (1, 1) \right] = \left[(0, 0), (e^{x_1-x_2}, e^{x_2}) \right],
		\end{aligned}
$$
or written more compactly,
$$
	\phi_{v, s}: X_v \cong \RR^2 \longrightarrow X_s \cong \RR_+ \times \RR, \qquad  (x_1, x_2) \mapsto \left(e^{x_1-x_2}, e^{x_2}\right).
$$
\end{example}

\end{pp}

\subsubsection{From admissible decorated cubic complexes to tropical welded spaces}

\begin{pp}
Given an admissible decorated cubic complex $\left(\mathsf{C}, \mathsf{\Gamma}, \{\xi_i\}_{i\in I} \right)$, for each cube $c \in \mathsf{C}$, we have a complete log affine manifold $X_c$ as in Definition~\ref{def: X_c}; for each embedding 
$\varphi_{f, c} \in \mathsf{\Gamma}$, we have an embedding $\phi_{f, c}: X_f \to X_c$ as in Definition~\ref{def: X_f to X_c}. Define $X_\mathsf{C}$ as the coproduct
\begin{equation}
	X_\mathsf{C} = \coprod_c X_c \,/ \sim\,,
\end{equation}
where for $x_f \in X_f$ and $x_c \in X_c$, we have $x_f \sim x_c$ if $x_c = \phi_{f, c}(x_f)$.

\begin{prop} \label{prop: grandweld}
	 Given an admissible decorated cubic complex $\left(\mathsf{C}, \mathsf{\Gamma}, \{\xi_i\}_{i\in I} \right)$, the welding procedure detailed above yields a complete log affine manifold with corners and with trivial affine monodromy, 
i.e., $X_\mathsf{C}$ with the natural log affine structure inherited from $X_c$ is a tropical welded space.

Furthermore, if $(X_\mathsf{C}, D, \xi)$ is the resulting tropical welded space, then the dual cubic complex $\mathsf{C}_{X_\mathsf{C}, D}$, decorated with the residue vectors, is $\left(\mathsf{C}, \mathsf{\Gamma}, \{\xi_i\}_{i\in I} \right)$.
\end{prop}

\begin{proof}
That fact $X_\mathsf{C}$ is smooth and log affine is checked locally. Since each $X_c$ for $c \in \mathsf{C}$ is complete, it follows that $(X_\mathsf{C}, D, \xi)$ is also complete.

To show that $(X_\mathsf{C}, D, \xi)$ has trivial affine monodromy, it suffices to check that for each closed loop $\gamma$ on $X$, the Cauchy principal value $PV \int_\gamma \xi$ is zero. Without loss of generality, the base point of $\gamma$ may be chosen to be the origin of $X_v$ for some closed vertex $v \in \mathsf{C}$. It follows that $\gamma$ is homotopic to a curve $\gamma'$ with the following properties:
\begin{enumerate}
	\item $\gamma' = \gamma_1 \gamma_2 \ldots \gamma_k$;
	\item each segment $\gamma_j$ connects the origins of $X_{v_{j-1}} \cong U$ and $X_{v_{j}} \cong U$, where the two vertices $v_{j-1}$ and $v_{j}$ are connected by an edge, and $v_0 = v_n = v$.
\end{enumerate}
From the construction of $(X_\mathsf{C}, D, \xi)$, we have
$$
	PV \int_\gamma \xi = \sum_j PV \int_{\gamma_j} \xi = 0.
$$
It is also clear from the construction that $\mathsf{C}_{X_\mathsf{C}, D}$ decorated with the residue vectors is indeed $\left( \mathsf{C}, \{\xi_i\}_{i\in I} \right)$.
\end{proof}

\end{pp}

\begin{pp}

Combining the results of Proposition~\ref{prop:admcubcx} and Proposition~\ref{prop: grandweld}, we have the following result.

\begin{theorem} \label{thm: weld}
	If $(X, D, \xi)$ is a tropical welded space, then the cubic complex structure $\mathsf{C}_{X, D}$ decorated with the residue vectors $\{\xi_i\}_{i\in I}$ is an admissible decorated cubic complex.

Furthermore, this yields a bijection between the isomorphism classes of tropical welded spaces and the isomorphism classes of admissible decorated cubic complexes.
\end{theorem}

\end{pp}

\subsection{Examples}

We now present three illustrative examples of welding to produce
nontrivial log affine structures on $S^2$, $T^2$, and an
orientable surface of genus two. In these examples we omit a detailed
discussion of the resulting logarithmic one-form~\eqref{eq:57} and its
residues or periods.

\begin{example}\label{spherethreecircles}
Consider the $\RR^3$-action on $\RR^3\setminus\{0\}$ by rescaling:
\begin{equation}
	(t, u, v) \cdot (x,y,z) = (e^t x, e^u y, e^v z).
\end{equation}
If we express $S^2$ as the quotient of $\RR^3\setminus\{0\}$ by the diagonal $\RR$-action
\begin{equation}
  \label{eq:22}
  t\cdot(x,y,z) = e^t(x,y,z),
\end{equation}
then $S^2$ carries a residual $\RR^2$-action, and therefore a
log affine structure with degeneracy locus given by the intersection
with the coordinate hyperplanes, a normal crossing divisor which
divides the sphere into 8 contractible regions. 
The dual cubic complex is the standard cube including all its faces.

\begin{figure}[H]
\centering
\tdplotsetmaincoords{60}{30}
\begin{tikzpicture}[scale=.4]
\begin{scope}[tdplot_main_coords]
\draw[black,dashed] (0,0,0) pic{carc=-50:220:.8};
\draw[black,thick] (0,0,0) pic{carc=-150:30:.8};
\end{scope}
\tdplotsetrotatedcoords{0}{90}{0}
\begin{scope}[tdplot_rotated_coords]
\draw[black,dashed] (0,0,0) pic{carc=-30:140:.8};
\draw[black,thick] (0,0,0) pic{carc=140:330:.8};
\end{scope}
\tdplotsetrotatedcoords{90}{90}{-90}
\begin{scope}[tdplot_rotated_coords]
\draw[black,dashed] (0,0,0) pic{carc=37:220:.8};
\draw[black,thick] (0,0,0) pic{carc=-140:37:.8};
\end{scope}
\begin{scope}[tdplot_screen_coords]
\draw[black,thick] (0,0) circle (2);
\end{scope}

\hspace{4cm}
\newlength{\hshift}
\setlength{\hshift}{-2cm}
\newlength{\vshift}
\setlength{\vshift}{-2cm}

    \coordinate (A1) at (0cm+\hshift, 0cm+\vshift);
    \coordinate (A2) at (0cm+\hshift, 3cm+\vshift);
    \coordinate (A3) at (3cm+\hshift, 3cm+\vshift);
    \coordinate (A4) at (3cm+\hshift, 0+\vshift);
    \coordinate (B1) at (0.9cm+\hshift, 0.9cm+\vshift);
    \coordinate (B2) at (0.9cm+\hshift, 3.9cm+\vshift);
    \coordinate (B3) at (3.9cm+\hshift, 3.9cm+\vshift);
    \coordinate (B4) at (3.9cm+\hshift, 0.9cm+\vshift);

    \draw (A1) -- (A2);
    \draw (A2) -- (A3);
    \draw (A3) -- (A4);
    \draw (A4) -- (A1);

    \draw[dashed] (A1) -- (B1);
    \draw (A2) -- (B2);
    \draw (A3) -- (B3);
    \draw (A4) -- (B4);

    \draw[dashed] (B1) -- (B2);
    \draw (B2) -- (B3);
    \draw (B4) -- (B3);
    \draw[dashed] (B1) -- (B4);

	\node at (-0.4,-2.2)  {\tiny $\alpha$};
	\node at (-0.4,-2.2+3)  {\tiny $\alpha$};
	\node at (-0.4+0.8,-2.2+0.95)  {\tiny $\alpha$};
	\node at (-0.4+0.8,-2.2+0.95+3)  {\tiny $\alpha$};

	\node at (-2.2,-0.5)  {\tiny $\beta$};
	\node at (-2.2+3,-0.5)  {\tiny $\beta$};
	\node at (-2.2+0.9,-0.5+0.8)  {\tiny $\beta$};
	\node at (-2.2+0.9+3,-0.5+0.8)  {\tiny $\beta$};

	\node at (-1.6,-1.4)  {\tiny $\gamma$};
	\node at (-1.6+3,-1.4)  {\tiny $\gamma$};
	\node at (-1.6,-1.4+3)  {\tiny $\gamma$};
	\node at (-1.6+3,-1.4+3)  {\tiny $\gamma$};

\draw[fill=black] (0cm+\hshift, 0cm+\vshift) circle [radius=1.5pt];
\draw[fill=black] (0cm+\hshift, 3cm+\vshift) circle [radius=1.5pt];
\draw[fill=black] (3cm+\hshift, 3cm+\vshift) circle [radius=1.5pt];
\draw[fill=black] (3cm+\hshift, 0+\vshift) circle [radius=1.5pt];
\draw[fill=black] (0.9cm+\hshift, 0.9cm+\vshift) circle [radius=1.5pt];
\draw[fill=black] (0.9cm+\hshift, 3.9cm+\vshift) circle [radius=1.5pt];
\draw[fill=black] (3.9cm+\hshift, 3.9cm+\vshift) circle [radius=1.5pt];
\draw[fill=black] (3.9cm+\hshift, 0.9cm+\vshift) circle [radius=1.5pt];

\draw [draw=none,fill=gray, opacity=0.2] (A1)--(A2)--(A3)--(A4)--(A1);
\draw [draw=none,fill=gray, opacity=0.2] (A2)--(A3)--(B3)--(B2)--(A2);
\draw [draw=none,fill=gray, opacity=0.2] (A3)--(A4)--(B4)--(B3)--(A3);

\end{tikzpicture}
\hspace{10em}
\caption{Log affine structure on $S^2$ with degeneracy along three circles and its dual cubic complex}
\end{figure}
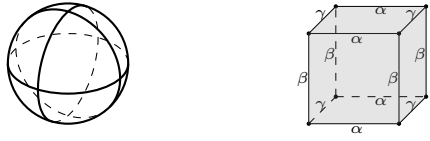

Each of the 8 vertices of the standard cube inherits a labeled simplicial fan. These are illustrated in Figure~\ref{sphaf}, where the Greek letters represent the labeled vectors along the edges. We add the subscripts to the Greek letters in order to distinguish the different parallel edges.

\begin{figure}[H]
    \centering
    \begin{tikzpicture}[scale=.8]
      \draw [-] (0,0) -- (-.8,-.8);
      \node at (-.3,-.6)  {$\gamma_1$};
      \node at (-.2,.6)  {$\beta_1$};
      \node at (.8,.2) {$\alpha_1$};
            \draw [-] (0,0) -- (0,1);
      \draw [-] (0,0) -- (1,0);
          \draw [draw=none,fill=gray, opacity=0.2]
      (0,0) -- (0,1) -- (1,1) -- (1,0)-- cycle;
          \draw [draw=none,fill=gray, opacity=0.2]
      (0,0) -- (0,1) -- (-.8,.2) -- (-.8,-.8)-- cycle;
          \draw [draw=none,fill=gray, opacity=0.2]
      (0,0) -- (1,0) -- (.2,-.8) -- (-.8,-.8)-- cycle;
    \draw[fill=black] (-.6,-.6) circle [radius=1pt];
	\draw[fill=black] (.8,0) circle [radius=1pt];	
	\draw[fill=black] (0,.8) circle [radius=1pt];	
    \end{tikzpicture}
\hspace{1em}
    \begin{tikzpicture}[scale=.8]
      \draw [-] (0,0) -- (-.8,-.8);
      \node at (-.3,-.6)  {$\gamma_2$};
      \node at (-.2,.6)  {$\beta_2$};
      \node at (.8,.2) {$\alpha_1$};
      \draw [-] (0,0) -- (0,1);
      \draw [-] (0,0) -- (1,0);
          \draw [draw=none,fill=gray, opacity=0.2]
      (0,0) -- (0,1) -- (1,1) -- (1,0)-- cycle;
          \draw [draw=none,fill=gray, opacity=0.2]
      (0,0) -- (0,1) -- (-.8,.2) -- (-.8,-.8)-- cycle;
          \draw [draw=none,fill=gray, opacity=0.2]
      (0,0) -- (1,0) -- (.2,-.8) -- (-.8,-.8)-- cycle;
    \draw[fill=black] (-.6,-.6) circle [radius=1pt];
	\draw[fill=black] (.8,0) circle [radius=1pt];	
	\draw[fill=black] (0,.8) circle [radius=1pt];
	\end{tikzpicture}
\hspace{1em}
    \begin{tikzpicture}[scale=.8]
      \draw [-] (0,0) -- (-.8,-.8);
      \node at (-.3,-.6)  {$\gamma_4$};
      \node at (-.2,.6)  {$\beta_1$};
      \node at (.8,.2) {$\alpha_2$};
      \draw [-] (0,0) -- (0,1);
      \draw [-] (0,0) -- (1,0);
          \draw [draw=none,fill=gray, opacity=0.2]
      (0,0) -- (0,1) -- (1,1) -- (1,0)-- cycle;
          \draw [draw=none,fill=gray, opacity=0.2]
      (0,0) -- (0,1) -- (-.8,.2) -- (-.8,-.8)-- cycle;
          \draw [draw=none,fill=gray, opacity=0.2]
      (0,0) -- (1,0) -- (.2,-.8) -- (-.8,-.8)-- cycle;
    \draw[fill=black] (-.6,-.6) circle [radius=1pt];
	\draw[fill=black] (.8,0) circle [radius=1pt];	
	\draw[fill=black] (0,.8) circle [radius=1pt];
	\end{tikzpicture}
\hspace{1em}
    \begin{tikzpicture}[scale=.8]
      \draw [-] (0,0) -- (-.8,-.8);
      \node at (-.3,-.6)  {$\gamma_1$};
      \node at (-.2,.6)  {$\beta_4$};
      \node at (.8,.2) {$\alpha_4$};
      \draw [-] (0,0) -- (0,1);
      \draw [-] (0,0) -- (1,0);
          \draw [draw=none,fill=gray, opacity=0.2]
      (0,0) -- (0,1) -- (1,1) -- (1,0)-- cycle;
          \draw [draw=none,fill=gray, opacity=0.2]
      (0,0) -- (0,1) -- (-.8,.2) -- (-.8,-.8)-- cycle;
          \draw [draw=none,fill=gray, opacity=0.2]
      (0,0) -- (1,0) -- (.2,-.8) -- (-.8,-.8)-- cycle;
    \draw[fill=black] (-.6,-.6) circle [radius=1pt];
	\draw[fill=black] (.8,0) circle [radius=1pt];	
	\draw[fill=black] (0,.8) circle [radius=1pt];
	\end{tikzpicture}
\\
    \begin{tikzpicture}[scale=.8]
      \draw [-] (0,0) -- (-.8,-.8);
      \node at (-.3,-.6)  {$\gamma_3$};
      \node at (-.2,.6)  {$\beta_2$};
      \node at (.8,.2) {$\alpha_2$};
      \draw [-] (0,0) -- (0,1);
      \draw [-] (0,0) -- (1,0);
          \draw [draw=none,fill=gray, opacity=0.2]
      (0,0) -- (0,1) -- (1,1) -- (1,0)-- cycle;
          \draw [draw=none,fill=gray, opacity=0.2]
      (0,0) -- (0,1) -- (-.8,.2) -- (-.8,-.8)-- cycle;
          \draw [draw=none,fill=gray, opacity=0.2]
      (0,0) -- (1,0) -- (.2,-.8) -- (-.8,-.8)-- cycle;
    \draw[fill=black] (-.6,-.6) circle [radius=1pt];
	\draw[fill=black] (.8,0) circle [radius=1pt];	
	\draw[fill=black] (0,.8) circle [radius=1pt];
	\end{tikzpicture}
\hspace{1em}
    \begin{tikzpicture}[scale=.8]
      \draw [-] (0,0) -- (-.8,-.8);
      \node at (-.3,-.6)  {$\gamma_2$};
      \node at (-.2,.6)  {$\beta_3$};
      \node at (.8,.2) {$\alpha_4$};
      \draw [-] (0,0) -- (0,1);
      \draw [-] (0,0) -- (1,0);
          \draw [draw=none,fill=gray, opacity=0.2]
      (0,0) -- (0,1) -- (1,1) -- (1,0)-- cycle;
          \draw [draw=none,fill=gray, opacity=0.2]
      (0,0) -- (0,1) -- (-.8,.2) -- (-.8,-.8)-- cycle;
          \draw [draw=none,fill=gray, opacity=0.2]
      (0,0) -- (1,0) -- (.2,-.8) -- (-.8,-.8)-- cycle;
    \draw[fill=black] (-.6,-.6) circle [radius=1pt];
	\draw[fill=black] (.8,0) circle [radius=1pt];	
	\draw[fill=black] (0,.8) circle [radius=1pt];
	\end{tikzpicture}
\hspace{1em}
    \begin{tikzpicture}[scale=.8]
      \draw [-] (0,0) -- (-.8,-.8);
      \node at (-.3,-.6)  {$\gamma_4$};
      \node at (-.2,.6)  {$\beta_4$};
      \node at (.8,.2) {$\alpha_3$};
      \draw [-] (0,0) -- (0,1);
      \draw [-] (0,0) -- (1,0);
          \draw [draw=none,fill=gray, opacity=0.2]
      (0,0) -- (0,1) -- (1,1) -- (1,0)-- cycle;
          \draw [draw=none,fill=gray, opacity=0.2]
      (0,0) -- (0,1) -- (-.8,.2) -- (-.8,-.8)-- cycle;
          \draw [draw=none,fill=gray, opacity=0.2]
      (0,0) -- (1,0) -- (.2,-.8) -- (-.8,-.8)-- cycle;
    \draw[fill=black] (-.6,-.6) circle [radius=1pt];
	\draw[fill=black] (.8,0) circle [radius=1pt];	
	\draw[fill=black] (0,.8) circle [radius=1pt];
	\end{tikzpicture}
\hspace{1em}
    \begin{tikzpicture}[scale=.8]
      \draw [-] (0,0) -- (-.8,-.8);
      \node at (-.3,-.6)  {$\gamma_3$};
      \node at (-.2,.6)  {$\beta_3$};
      \node at (.8,.2) {$\alpha_3$};
      \draw [-] (0,0) -- (0,1);
      \draw [-] (0,0) -- (1,0);
          \draw [draw=none,fill=gray, opacity=0.2]
      (0,0) -- (0,1) -- (1,1) -- (1,0)-- cycle;
          \draw [draw=none,fill=gray, opacity=0.2]
      (0,0) -- (0,1) -- (-.8,.2) -- (-.8,-.8)-- cycle;
          \draw [draw=none,fill=gray, opacity=0.2]
      (0,0) -- (1,0) -- (.2,-.8) -- (-.8,-.8)-- cycle;
    \draw[fill=black] (-.6,-.6) circle [radius=1pt];
	\draw[fill=black] (.8,0) circle [radius=1pt];	
	\draw[fill=black] (0,.8) circle [radius=1pt];
	\end{tikzpicture}
\caption{Labelled simplicial fans of the tropical welded space $S^2$}\label{sphaf}
\end{figure}
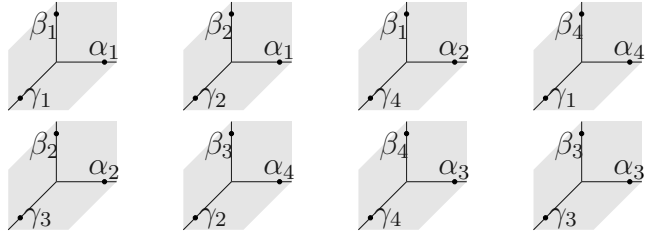
\end{example}

\begin{example} \label{ex: torus}

	Let $\theta \in \RR / 2\pi \ZZ$ be the coordinate on $S^1$. The vector field
$$
	X = \sin \theta \frac{\partial}{\partial \theta}
$$
has two non-degenerate zeros. We equip the 2-torus $S^1\times S^1$ with a log affine structure given by the action of $\RR^2$ generated by the vector fields $V_1=(X,0)$ and $V_2=(0,X)$. In these coordinates, the $\RR^2$-valued logarithmic 1-form $\xi$ on $S^1 \times S^1$ takes the form:
$$
	\xi = \left(\frac{d \theta_1}{\sin \theta_1}, \frac{d \theta_2}{\sin \theta_2}\right).
$$
The determinant $V_1\wedge V_2$ then defines a
  normal crossing divisor consisting of four circles, dividing the
  torus into four square regions.

\begin{figure}[H]\centering
	\begin{tikzpicture}[scale=.8]
	
	\draw (-3,-1) -- (-3,1);
	\node at (-3.2, 0)  {$a$};

	\draw (-1,-1) -- (-1,1);
	\node at (-0.55, 0)  {$a^{-1}$};

	\draw (-3,1) -- (-1,1);
	\node at (-2, 1.3)  {$b^{-1}$};

	\draw (-3,-1) -- (-1,-1);
	\node at (-2, -1.3)  {$b$};

	\draw (-3,0) -- (-1,0);
	\draw (-2,-1) -- (-2,1);
	
	\draw [lightgr] (1,-1) -- (1,1);
	\node at (0.8, 0)  {$a$};

	\draw [lightgr] (3,-1) -- (3,1);
	\node at (3.45, 0)  {$a^{-1}$};

	\draw [lightgr] (1,1) -- (3,1);
	\node at (2, 1.3)  {$b^{-1}$};

	\draw [lightgr] (1,-1) -- (3,-1);
	\node at (2, -1.3)  {$b$};

	\draw (1,0.5) -- (3,0.5);
	\draw (1,-0.5) -- (3,-0.5);
	\draw (1.5,-1) -- (1.5,1);
	\draw (2.5,-1) -- (2.5,1);
	\draw [draw=none,fill=gray, opacity=0.2]
	(3,1) -- (3,-1) -- (1,-1) -- (1,1)-- cycle;

	\draw[fill=black] (1.5, -0.5) circle [radius=1.5pt];
	\draw[fill=black] (1.5, 0.5) circle [radius=1.5pt];
	\draw[fill=black] (2.5, -0.5) circle [radius=1.5pt];
	\draw[fill=black] (2.5, 0.5) circle [radius=1.5pt];

	\node at (2,0.4)  {\tiny $\alpha$};
	\node at (1.25,0.4)  {\tiny $\alpha$};
	\node at (2.75,0.4)  {\tiny $\alpha$};

	\node at (2,-0.6)  {\tiny $\alpha$};
	\node at (1.25,-0.6)  {\tiny $\alpha$};
	\node at (2.75,-0.6)  {\tiny $\alpha$};

	\node at (1.4,0.75)  {\tiny $\beta$};
	\node at (1.4,0)  {\tiny $\beta$};
	\node at (1.4,-0.75)  {\tiny $\beta$};

	\node at (2.4,0.75)  {\tiny $\beta$};
	\node at (2.4,0)  {\tiny $\beta$};
	\node at (2.4,-0.75)  {\tiny $\beta$};

	\end{tikzpicture}
\caption{$T^2$ with its degeneracy divisor and the dual cubic complex}
\end{figure}
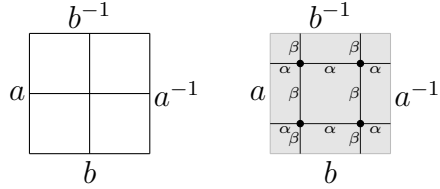

Each of the 4 vertices of the dual cubic complex inherits a labeled simplicial fan. These are illustrated in Figure~\ref{toraf}, where the Greek letters represent the labeled vectors along the edges. We add the subscripts to the Greek letters in order to distinguish the different parallel edges.
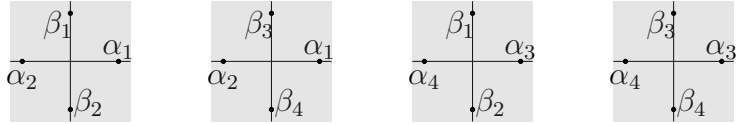
\begin{figure}[H]
    \centering
    \begin{tikzpicture}[scale=.8]
      \node at (-.2,.6)  {$\beta_1$};
      \node at (.8,.2) {$\alpha_1$};
      \node at (-.8,-.3) {$\alpha_2$};
      \node at (.3,-.7) {$\beta_2$};
      \draw [-] (0,0) -- (0,1);
      \draw [-] (0,0) -- (1,0);
      \draw [-] (0,0) -- (0,-1);
      \draw [-] (0,0) -- (-1,0);
          \draw [draw=none,fill=gray, opacity=0.2]
      (-1,-1) -- (-1,1) -- (1,1) -- (1,-1)-- cycle;
    \draw[fill=black] (-.8,0) circle [radius=1pt];
	\draw[fill=black] (.8,0) circle [radius=1pt];	
	\draw[fill=black] (0,.8) circle [radius=1pt];
	\draw[fill=black] (0,-.8) circle [radius=1pt];
	
    \end{tikzpicture}
\hspace{1em}
    \begin{tikzpicture}[scale=.8]
      \node at (-.2,.6)  {$\beta_3$};
      \node at (.8,.2) {$\alpha_1$};
      \node at (-.8,-.3) {$\alpha_2$};
      \node at (.3,-.7) {$\beta_4$};
      \draw [-] (0,0) -- (0,1);
      \draw [-] (0,0) -- (1,0);
      \draw [-] (0,0) -- (0,-1);
      \draw [-] (0,0) -- (-1,0);
          \draw [draw=none,fill=gray, opacity=0.2]
      (-1,-1) -- (-1,1) -- (1,1) -- (1,-1)-- cycle;
\draw[fill=black] (-.8,0) circle [radius=1pt];
	\draw[fill=black] (.8,0) circle [radius=1pt];	
	\draw[fill=black] (0,.8) circle [radius=1pt];
	\draw[fill=black] (0,-.8) circle [radius=1pt];
	
    \end{tikzpicture}
\hspace{1em}
    \begin{tikzpicture}[scale=.8]
      \node at (-.2,.6)  {$\beta_1$};
      \node at (.8,.2) {$\alpha_3$};
      \node at (-.8,-.3) {$\alpha_4$};
      \node at (.3,-.7) {$\beta_2$};
      \draw [-] (0,0) -- (0,1);
      \draw [-] (0,0) -- (1,0);
      \draw [-] (0,0) -- (0,-1);
      \draw [-] (0,0) -- (-1,0);
          \draw [draw=none,fill=gray, opacity=0.2]
      (-1,-1) -- (-1,1) -- (1,1) -- (1,-1)-- cycle;
	
	\draw[fill=black] (-.8,0) circle [radius=1pt];
	\draw[fill=black] (.8,0) circle [radius=1pt];	
	\draw[fill=black] (0,.8) circle [radius=1pt];
	\draw[fill=black] (0,-.8) circle [radius=1pt];
	
    \end{tikzpicture}
\hspace{1em}
    \begin{tikzpicture}[scale=.8]
      \node at (-.2,.6)  {$\beta_3$};
      \node at (.8,.2) {$\alpha_3$};
      \node at (-.8,-.3) {$\alpha_4$};
      \node at (.3,-.7) {$\beta_4$};
      \draw [-] (0,0) -- (0,1);
      \draw [-] (0,0) -- (1,0);
      \draw [-] (0,0) -- (0,-1);
      \draw [-] (0,0) -- (-1,0);
	\draw [draw=none,fill=gray, opacity=0.2]
	(-1,-1) -- (-1,1) -- (1,1) -- (1,-1)-- cycle;

	\draw[fill=black] (-.8,0) circle [radius=1pt];
	\draw[fill=black] (.8,0) circle [radius=1pt];	
	\draw[fill=black] (0,.8) circle [radius=1pt];
	\draw[fill=black] (0,-.8) circle [radius=1pt];
	
    \end{tikzpicture}

\caption{Labelled simplicial fans of the tropical welded space $T^2$}\label{toraf}
\end{figure}
\end{example}

\begin{example}\label{hex}

We consider the orientable genus 2 surface with six degeneracy circles as illustrated on the left part of Figure~\ref{fig: genus2surf}. (See also Figure~\ref{fig:genus2}.)
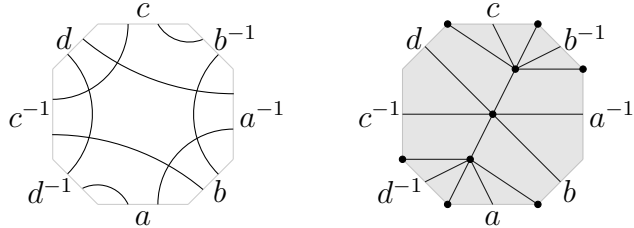
\begin{figure}[H]
    \centering

    \begin{tikzpicture}[scale=.6]
\draw [lightgr] (2,-1) -- (2,1);
	\node at (2.65, 0)  {$a^{-1}$};

	\draw [lightgr] (2,1) -- (1,2);
	\node at (2.05, 1.7)  {$b^{-1}$};

	\draw [lightgr] (1,2) -- (-1,2);
	\node at (0, 2.3)  {$c$};

	\draw [lightgr] (-1,2) -- (-2,1);
	\node at (-1.75, 1.7)  {$d$};

	\draw [lightgr] (-2,1) -- (-2,-1);
	\node at (-2.5, 0)  {$c^{-1}$};

	\draw [lightgr] (-2,-1) -- (-1,-2);
	\node at (-2.05, -1.7)  {$d^{-1}$};

	\draw [lightgr] (-1,-2) -- (1,-2);
	\node at (0, -2.3)  {$a$};

	\draw [lightgr] (1,-2) -- (2,-1);
	\node at (1.7, -1.7)  {$b$};

	\draw (0.33,2) arc (-150:-68:0.8);
	\draw (-0.33,-2) arc (25:116:0.75);

	\draw (1.33,-1.67) arc (50:90:5.2);
	\draw (-1.33,1.67) arc (230:270:5.2);

	\draw (0.33,-2) arc (180:90:1.67);
	\draw (-0.33,2) arc (0:-90:1.67);

	\draw (1.67,-1.33) arc (225:135:1.875);
	\draw (-1.67,1.33) arc (45:-45:1.875);
    \end{tikzpicture}
\hspace{1em}
    \begin{tikzpicture}[scale=.6]

	\draw [lightgr] (2,-1) -- (2,1);
	\node at (2.65, 0)  {$a^{-1}$};

	\draw [lightgr] (2,1) -- (1,2);
	\node at (2.05, 1.7)  {$b^{-1}$};

	\draw [lightgr] (1,2) -- (-1,2);
	\node at (0, 2.3)  {$c$};

	\draw [lightgr] (-1,2) -- (-2,1);
	\node at (-1.75, 1.7)  {$d$};

	\draw [lightgr] (-2,1) -- (-2,-1);
	\node at (-2.5, 0)  {$c^{-1}$};

	\draw [lightgr] (-2,-1) -- (-1,-2);
	\node at (-2.05, -1.7)  {$d^{-1}$};

	\draw [lightgr] (-1,-2) -- (1,-2);
	\node at (0, -2.3)  {$a$};

	\draw [lightgr] (1,-2) -- (2,-1);
	\node at (1.7, -1.7)  {$b$};
	
	\draw [-] (1,2) -- (-1,-2);
	\draw [-] (2,0) -- (-2,0);
	\draw [-] (-1.5,1.5) -- (1.5,-1.5);

	\draw [-] (0.5,1) -- (2,1);
	\draw [-] (0.5,1) -- (-1,2);
	\draw [-] (0.5,1) -- (1.5,1.5);
	\draw [-] (0.5,1) -- (0,2);

	\draw [-] (-0.5,-1) -- (-2,-1);
	\draw [-] (-0.5,-1) -- (1,-2);
	\draw [-] (-0.5,-1) -- (-1.5,-1.5);
	\draw [-] (-0.5,-1) -- (0,-2);




	\draw [draw=none,fill=gray, opacity=0.2]
	(2,-1) -- (2,1) -- (1,2) -- (-1,2) -- (-2,1) -- (-2,-1) -- (-1,-2) -- (1,-2) -- cycle;

	\draw[fill=black] (1,2) circle [radius=2pt];
	\draw[fill=black] (0.5,1) circle [radius=2pt];	
	\draw[fill=black] (0,0) circle [radius=2pt];
	\draw[fill=black] (-1,-2) circle [radius=2pt];
	\draw[fill=black] (-0.5,-1) circle [radius=2pt];

	\draw[fill=black] (2,1) circle [radius=2pt];
	\draw[fill=black] (-1,2) circle [radius=2pt];
	\draw[fill=black] (-2,-1) circle [radius=2pt];
	\draw[fill=black] (1,-2) circle [radius=2pt];
    \end{tikzpicture}
\caption{Genus 2 surface with six degeneracy circles and the dual cubic complex}
\label{fig: genus2surf}
\end{figure}
Its dual cubic complex, as illustrated on the right part of 
Figure~\ref{fig: genus2surf}, may be made into an admissible decorated 
cubic complex. Note that the six dots on the boundary of the octagon are the same point of the
genus 2 surface, so there are 4 vertices on the dual cubic complex.

Figure~\ref{hexfig} illustrates the labeled simplicial fans of the 4 vertices, where we add the subscripts to distinguish the different parallel edges. Topologically, the six degeneracy circles on the orientable genus 2 surface gives a tiling of the surface by four hexagons, and Figure~\ref{hexfig} indicates how the four hexagons are glued along edges.

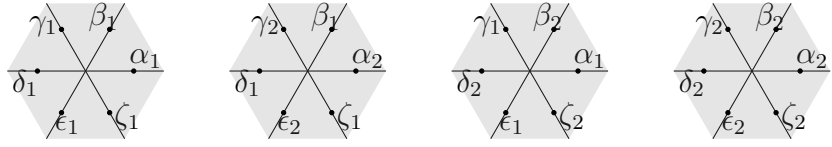
\begin{figure}[H]
    \centering
    \begin{tikzpicture}[scale=.8]
      \node at (1,.2)  {$\alpha_1$};
      \node at (.3,.9) {$\beta_1$};
      \node at (-.7,.75) {$\gamma_1$};
      \node at (-1,-.2) {$\delta_1$};
      \node at (-.3,-.9) {$\epsilon_1$};
      \node at (.7,-.75) {$\zeta_1$};
      
      \draw [-,scale = 1.3] (0,0) -- (1,0);
      \draw [-,scale = 1.3] (0,0) -- (.5,0.866);
      \draw [-,scale = 1.3] (0,0) -- (-.5,0.866);
      \draw [-,scale = 1.3] (0,0) -- (-1,0);
      \draw [-,scale = 1.3] (0,0) -- (-.5,-0.866);
      \draw [-,scale = 1.3] (0,0) -- (.5,-0.866);
      \draw [draw=none,fill=gray, opacity=0.2,scale = 1.3]
      (1,0) -- (.5,.866) -- (-.5,.866) -- (-1,0) -- (-.5,-.866) -- (.5, -.866) -- cycle;
      \draw[fill=black] (.8,0) circle [radius=1pt];
      \draw[fill=black] (.8*.5,.8*0.866) circle [radius=1pt];
      \draw[fill=black] (.8*-.5,.8*0.866) circle [radius=1pt];
      \draw[fill=black] (.8*-1,0) circle [radius=1pt];
      \draw[fill=black] (.8*-.5,.8*-0.866) circle [radius=1pt];
      \draw[fill=black] (.8*.5,.8*-0.866) circle [radius=1pt];
      
    \end{tikzpicture}
\hspace{1em}
    \begin{tikzpicture}[scale=.8]
      \node at (1,.2)  {$\alpha_2$};
      \node at (.3,.9) {$\beta_1$};
      \node at (-.7,.75) {$\gamma_2$};
      \node at (-1,-.2) {$\delta_1$};
      \node at (-.3,-.9) {$\epsilon_2$};
      \node at (.7,-.75) {$\zeta_1$};
      
      \draw [-,scale = 1.3] (0,0) -- (1,0);
      \draw [-,scale = 1.3] (0,0) -- (.5,0.866);
      \draw [-,scale = 1.3] (0,0) -- (-.5,0.866);
      \draw [-,scale = 1.3] (0,0) -- (-1,0);
      \draw [-,scale = 1.3] (0,0) -- (-.5,-0.866);
      \draw [-,scale = 1.3] (0,0) -- (.5,-0.866);
      \draw [draw=none,fill=gray, opacity=0.2,scale = 1.3]
      (1,0) -- (.5,.866) -- (-.5,.866) -- (-1,0) -- (-.5,-.866) -- (.5, -.866) -- cycle;
      \draw[fill=black] (.8,0) circle [radius=1pt];
      \draw[fill=black] (.8*.5,.8*0.866) circle [radius=1pt];
      \draw[fill=black] (.8*-.5,.8*0.866) circle [radius=1pt];
      \draw[fill=black] (.8*-1,0) circle [radius=1pt];
      \draw[fill=black] (.8*-.5,.8*-0.866) circle [radius=1pt];
      \draw[fill=black] (.8*.5,.8*-0.866) circle [radius=1pt];
    \end{tikzpicture}
\hspace{1em}
    \begin{tikzpicture}[scale=.8]
      \node at (1,.2)  {$\alpha_1$};
      \node at (.3,.9) {$\beta_2$};
      \node at (-.7,.75) {$\gamma_1$};
      \node at (-1,-.2) {$\delta_2$};
      \node at (-.3,-.9) {$\epsilon_1$};
      \node at (.7,-.75) {$\zeta_2$};
      
      \draw [-,scale = 1.3] (0,0) -- (1,0);
      \draw [-,scale = 1.3] (0,0) -- (.5,0.866);
      \draw [-,scale = 1.3] (0,0) -- (-.5,0.866);
      \draw [-,scale = 1.3] (0,0) -- (-1,0);
      \draw [-,scale = 1.3] (0,0) -- (-.5,-0.866);
      \draw [-,scale = 1.3] (0,0) -- (.5,-0.866);
      \draw [draw=none,fill=gray, opacity=0.2,scale = 1.3]
      (1,0) -- (.5,.866) -- (-.5,.866) -- (-1,0) -- (-.5,-.866) -- (.5, -.866) -- cycle;
      \draw[fill=black] (.8,0) circle [radius=1pt];
      \draw[fill=black] (.8*.5,.8*0.866) circle [radius=1pt];
      \draw[fill=black] (.8*-.5,.8*0.866) circle [radius=1pt];
      \draw[fill=black] (.8*-1,0) circle [radius=1pt];
      \draw[fill=black] (.8*-.5,.8*-0.866) circle [radius=1pt];
      \draw[fill=black] (.8*.5,.8*-0.866) circle [radius=1pt];
    \end{tikzpicture}
\hspace{1em}
    \begin{tikzpicture}[scale=.8]
      \node at (1,.2)  {$\alpha_2$};
      \node at (.3,.9) {$\beta_2$};
      \node at (-.7,.75) {$\gamma_2$};
      \node at (-1,-.2) {$\delta_2$};
      \node at (-.3,-.9) {$\epsilon_2$};
      \node at (.7,-.75) {$\zeta_2$};
      
      \draw [-,scale = 1.3] (0,0) -- (1,0);
      \draw [-,scale = 1.3] (0,0) -- (.5,0.866);
      \draw [-,scale = 1.3] (0,0) -- (-.5,0.866);
      \draw [-,scale = 1.3] (0,0) -- (-1,0);
      \draw [-,scale = 1.3] (0,0) -- (-.5,-0.866);
      \draw [-,scale = 1.3] (0,0) -- (.5,-0.866);
      \draw [draw=none,fill=gray, opacity=0.2,scale = 1.3]
      (1,0) -- (.5,.866) -- (-.5,.866) -- (-1,0) -- (-.5,-.866) -- (.5, -.866) -- cycle;
      \draw[fill=black] (.8,0) circle [radius=1pt];
      \draw[fill=black] (.8*.5,.8*0.866) circle [radius=1pt];
      \draw[fill=black] (.8*-.5,.8*0.866) circle [radius=1pt];
      \draw[fill=black] (.8*-1,0) circle [radius=1pt];
      \draw[fill=black] (.8*-.5,.8*-0.866) circle [radius=1pt];
      \draw[fill=black] (.8*.5,.8*-0.866) circle [radius=1pt];
    \end{tikzpicture}
\hspace{1em}
\caption{Labelled simplicial fans of the genus $2$ surface as a tropical welded space}\label{hexfig}
\end{figure}
\end{example}

\section{Toric log symplectic manifolds}\label{sec3}

\begin{pp}

In analogy with the Poisson structure on the dual of any Lie algebra,
there is a canonical Poisson structure on the total space of the dual
bundle of any Lie algebroid~\cite{MR998124}. In the case of the
tangent bundle, this coincides with the canonical symplectic form on
the total space of the cotangent bundle. Given an action of $\RR^n$ on
$X$ generated by the vector fields $X_1,\ldots X_n$, let $A = \RR^n\ltimes
X$ be the corresponding abelian action Lie algebroid. If
$\{p_1,\ldots, p_n\}$ are standard vertical coordinates on
$N=\tot(A^*)$, the total space of the dual bundle of 
$A$, then the canonical Poisson structure is given by
\[
\sum_{k=1}^n \frac{\partial}{\partial p_k} \wedge X_k \in C^\infty(N,\wedge^2TN).
\]

If $X$ is taken to be a log affine manifold with free divisor $D$,
then $N \cong \RR^n\times X$ is also log affine for the pull back
divisor $Z=\RR^n\times D$.  The above Poisson structure is then
non-degenerate as a section of $\wedge^2(TN(-\log Z))$.  If
$\{\xi_1,\ldots, \xi_n\}$ is the basis of log forms dual to the basis
$\{X_1,\ldots, X_n\}$ of $TX(-\log D)$, then we may write the inverse of
the above Poisson structure as follows:
\begin{equation}
  \label{eq:23}
\omega = \sum_{k=1}^n \xi_k \wedge dp_k \in \Omega^2(N,\log Z).  
\end{equation}
This generalization of a symplectic structure is the focus of our
study and we characterize it as follows.
\begin{defn}
  Let $M$ be a manifold with free divisor $Z$ and corresponding
  Lie algebroid $TM(-\log Z)$.  A \textbf{log symplectic form}
  is a closed logarithmic 2-form
  \begin{equation}
    \label{eq:24}
    \omega \in \Omega^2(M,\log Z)
  \end{equation}
  which is non-degenerate in the sense that interior product defines 
  an isomorphism 
  \[
  \omega: TM(-\log Z)\to (TM(-\log Z))^*.
  \]
\end{defn}
\begin{remark}
  A log symplectic form may be viewed alternatively
  as a usual Poisson structure $\pi$ which degenerates along $Z$ in
  such a way that the top power $\pi^n$ is a reduced defining section
  for the free divisor $Z$.
\end{remark}

\end{pp}

\begin{pp}\label{quotastar}

The log symplectic form~\eqref{eq:23} is invariant under the global
$\RR^n$-action on $N$ by fiberwise translations.  We may therefore
quotient by the lattice $\Gamma = (2\pi \ZZ)^n\subset \RR^n$ and
obtain a trivial principal $T^n$-bundle $M$ equipped with a log
symplectic form invariant by the torus action, which we also denote by
$\Omega$. The bundle projection then induces a smooth submersion
\[
  \xymatrix{M\ar[r]^-{\mu} &  X }.
\]
Let $\Tt\cong \RR^n$ be the Lie algebra of $T^n$ and 
$\rho:\Tt\to C^\infty(M,TM)$ the infinitesimal principal action, 
which takes values in the sections of $TM(-\log\pi^*D)$. By contraction, 
we have a natural closed logarithmic 1-form with values in $\Tt^*$:
\[
i_{\rho} \omega \in \Omega^1(M,\log Z)\otimes\Tt^*,
\]
where $i_\rho \omega$ is the contraction of $\omega$ with $\rho$ in the first variable.
If $Z$ is of normal crossing type, we use the Mazzeo-Melrose 
decomposition~\ref{Mazzeo-Melrose} to express the logarithmic cohomology 
as 
\begin{equation}\label{mazmel}
H^1(M,\log Z) \cong H^1(M)\oplus \sum_i H^0(Z_i),
\end{equation}
where the sum is taken over the smooth components $Z_i$
of $Z$.

Recall that, as
a log affine manifold, $X$ carries a natural $\Tt^*$-valued
logarithmic form $\xi$ (see \eqref{eq:57}), 
  and so the map $\mu$ plays the role of a momentum map, in the sense
  that it satisfies
\begin{equation}
  \label{eq:21}
  i_{\rho} \omega = -\mu^* \xi.
\end{equation}

\begin{defn}\label{torlog}
  A \textbf{toric log symplectic manifold} of dimension $2n$ is a log
  symplectic manifold $(M, Z, \omega)$ of dimension $2n$, equipped
  with an effective action of the torus $T^n$ by symplectomorphisms,
  and a proper $T^n$-invariant smooth map $\mu:M\to X$ to a
  log affine manifold $(X,D,\xi)$, called a \textbf{tropical momentum map},
  satisfying condition~\eqref{eq:21}.  
  
  If $Z$ is a normal crossing divisor, we say that 
  the toric log symplectic manifold is \textbf{Hamiltonian}, when 
  the logarithmic cohomology class of $i_{\rho}\omega$ 
  has vanishing component in $H^1(M)\otimes\Tt^*$.
\end{defn}

\begin{remark}
  The most significant aspect of the above definition is that while each $i_{\rho(a)}\omega$ for $a\in\Tt$ is closed, we do not require it to be 
  \emph{exact}, that is, to be the differential of a Hamiltonian
  function. The reason for this is that such Hamiltonians would
  have logarithmic singularities along $Z$. Indeed, the component of $[i_{\rho(a)}\omega]$ in $H^0(Z_i)$ is given by its residue $r_i = \Res_{Z_i}(i_{\rho(a)}\omega)$, which is constant since $i_{\rho(a)}\omega$ is closed.  If $r_i\neq 0$, then in a neighbourhood of a point of $Z_i$ where the divisor has defining equation $x=0$, we have that $i_{\rho(a)}\omega - r_i d\log x$ is smooth, hence applying the Poincar\'e lemma,
\begin{equation}
i_{\rho(a)}\omega = d(r_i\log x + f)
\end{equation}  
for a smooth function $f$, yielding the claimed singular Hamiltonian $\mu_a = r_i\log x + f$.
  
  
  On the other hand, in the Hamiltonian toric case, we do require that the component of $[i_\rho\omega]$ in $H^1(M)\otimes\Tt^*$ vanishes, though the full logarithmic class, involving the residues described above, may not.  
  This condition specializes to the standard
  Hamiltonian assumption for usual symplectic manifolds. 
\end{remark}

\begin{remark}
One can show, using Proposition~\ref{prop: sympl uncut}, that the 
vanishing of the component of $[i_\rho\omega]$ in $H^1(M)\otimes\Tt^*$ is equivalent to the same condition on $[\xi]$, namely, that its Mazzeo-Melrose component in $H^1(X)\otimes\Tt^*$ vanishes.  This means that the Hamiltonian assumption above is equivalent to the assumption that $(X,D,\xi)$ has trivial affine monodromy. 
\end{remark}

\end{pp}

\subsection{Trivial principal case}
\begin{pp}
  In Paragraph~\ref{quotastar} we described the simplest example of a
  toric log symplectic manifold: a trivial principal
  $T^n$-bundle over a log affine manifold.
\begin{prop}
  Let $(X,D,\xi)$ be a log affine manifold, and $M$ the trivial
  principal $T^n$-bundle over $X$.  The log symplectic
  form~\eqref{eq:23} then defines a toric log symplectic
  structure on $M$ with momentum map $\mu$ given by the bundle
  projection and degeneracy locus $Z=\mu^{-1}(D)$.
\end{prop}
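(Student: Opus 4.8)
The plan is to unwind the log affine structure on $X$ into an explicit global frame for the logarithmic tangent bundle of $M$, and then verify each clause of Definition~\ref{torlog} by short computations in that frame; the construction is essentially the one described in Paragraph~\ref{quotastar}, and the content is bookkeeping. First I would fix the frame: by Definition~\ref{logafin}, the isomorphism $\xi$ realises $TX(-\log D)$ as an abelian action algebroid, so there are commuting vector fields $X_1,\dots,X_n$ globally framing $TX(-\log D)$, with dual logarithmic $1$-forms $\xi_1,\dots,\xi_n$ satisfying $d\xi_k=0$ (this closedness is exactly the fact that $\xi$ is a \emph{closed} $\RR^n$-valued logarithmic form as in~\eqref{eq:57}, equivalently the abelian-ness of the action). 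Write $M=T^n\times X$ with $T^n=\RR^n/\Gamma$, $\Gamma=(2\pi\ZZ)^n$, and let $\theta_1,\dots,\theta_n$ be the induced angular coordinates, so that the vertical coordinates $p_k$ of~\eqref{eq:23} descend to the closed (non-exact) forms $d\theta_k$ and $\omega=\sum_k \mu^*\xi_k\wedge d\theta_k$ is a well-defined element of $\Omega^2(M,\log Z)$ for $Z=\mu^{-1}(D)=T^n\times D$. Here one has the external direct sum decomposition $TM(-\log Z)\cong \pi^*TT^n\oplus \mu^*TX(-\log D)$, framed by $\partial_{\theta_1},\dots,\partial_{\theta_n},X_1,\dots,X_n$ with dual coframe $d\theta_1,\dots,d\theta_n,\xi_1,\dots,\xi_n$ (suppressing pullbacks).

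Next I would check that $\omega$ is a logarithmic symplectic form. Closedness is immediate, $d\omega=\sum_k d\xi_k\wedge d\theta_k=0$. For nondegeneracy I would compute the interior products $\iota_{\partial_{\theta_j}}\omega=-\xi_j$ and $\iota_{X_i}\omega=d\theta_i$, so that in the frame and coframe above the contraction map $TM(-\log Z)\to T^*M(\log Z)$ has block form $\left(\begin{smallmatrix} 0 & -I\\ I & 0\end{smallmatrix}\right)$, which is invertible. Moreover on $M\setminus Z=T^n\times(X\setminus D)$ the $\xi_k$ are honest $1$-forms dual to the honest frame $X_k$ of $T(X\setminus D)$, so $\omega$ restricts there to a genuine symplectic form; and the associated Poisson bivector has top power a reduced defining section of $Z$ (the Remark following the definition of logarithmic symplectic form), so $Z$ is precisely the degeneracy locus. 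Hence $(M,Z,\omega)$ is log symplectic of dimension $2n$.

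Finally I would verify the toric data. The translation action of $T^n$ on the first factor is free, hence effective; it preserves $Z$, and preserves $\omega$ because the $d\theta_k$ are translation-invariant and the $\mu^*\xi_k$ are pulled back from $X$, so $T^n$ acts by symplectomorphisms. The projection $\mu:M\to X$ is smooth and $T^n$-invariant, and it is proper since $T^n$ is compact, so $\mu^{-1}(K)=T^n\times K$ is compact for compact $K\subset X$. For the momentum condition, the fundamental vector field of $v=\sum_k v_k e_k\in\Tt$ is $\rho(v)=\sum_k v_k\,\partial_{\theta_k}$, whence $\iota_{\rho(v)}\omega=-\sum_k v_k\,\mu^*\xi_k=-\mu^*(\iota_v\xi)$ for every $v\in\Tt$; equivalently $i_\rho\omega=-\mu^*\xi$, which is exactly~\eqref{eq:21}. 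Assembling these items gives all the hypotheses of Definition~\ref{torlog}.

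There is no genuine obstacle in this argument; it is a verification. The only points deserving a moment's care are the sign in $\iota_{\partial_{\theta_j}}\omega=-\xi_j$, which is what makes~\eqref{eq:21} come out with the correct sign, and the observation that closedness of $\omega$ rests precisely on the hypothesis in Definition~\ref{logafin} that the target action algebroid is abelian, i.e.\ that $d\xi=0$.
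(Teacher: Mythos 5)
Your proof is correct and follows essentially the same route as the paper: the Proposition is stated there as a summary of the construction in Paragraph~\ref{quotastar} (the canonical form $\omega=\sum_k\xi_k\wedge dp_k$ on the dual of the action algebroid, descended to the lattice quotient), and your verification of closedness via $d\xi_k=0$, nondegeneracy via the block contraction matrix, and the momentum identity $i_\rho\omega=-\mu^*\xi$ is exactly the bookkeeping that construction implicitly contains.
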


\end{pp}

\begin{pp}
This example may be deformed by adding a \emph{magnetic term}. That is, if $\omega$ is the canonical symplectic
form~\eqref{eq:23}, then
\[
\omega + \mu^* B
\]
is also a log symplectic form, for any closed form $B\in\Omega^2(X,\log D)$. 
By applying a modified version of Moser's method as in \cite[{Theorem 38}]{MR3250302}, one can
show that only the logarithmic cohomology class of $B$ is relevant to
the resulting symplectic structure up to equivariant
symplectomorphism, yielding the following result.

\begin{theorem}
  Fix the log affine manifold $(X,D,\xi)$ and the trivial principal 
$T^n$-bundle $\mu:M\to X$ as above, and let $\omega_0$ be the canonical
  log symplectic form~\eqref{eq:23}.  If $\omega$ is another log
  symplectic form such that $(M,\omega,\mu)$ is toric log symplectic,
  then the difference $\omega-\omega_0$ is basic
  and closed. Furthermore, the map
  \[
  \omega\mapsto [\omega-\omega_0]\in H^2(X,\log D)
  \]
  induces a bijection between equivalence classes of toric 
  log symplectic structures with fixed momentum map $\mu$ and the
  vector space $H^2(X,\log D)$.
\end{theorem}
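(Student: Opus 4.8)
The plan is a Moser-type argument in four steps. Write $\rho_1,\dots,\rho_n$ for the fundamental vector fields of the principal $T^n$-action; these are sections of $TM(-\log Z)$, and since $\mu$ is the quotient map of this free action, pullback along $\mu$ identifies $\Omega^\bullet(X,\log D)$ with the \emph{basic} subcomplex of $\Omega^\bullet(M,\log Z)$ (the invariant forms killed by every $\iota_{\rho_k}$). First I would prove the basic-and-closed claim. The form $\omega-\omega_0$ is closed because both $\omega$ and $\omega_0$ are; it is $T^n$-invariant because both are (for $\omega_0$ this is the fibrewise-translation invariance of~\eqref{eq:23}); and it is horizontal because condition~\eqref{eq:21}, applied to $(M,\omega,\mu)$ and to $(M,\omega_0,\mu)$ with the \emph{same} momentum map $\mu$, gives $\iota_\rho\omega=-\mu^*\xi=\iota_\rho\omega_0$. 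Hence $\omega-\omega_0=\mu^*B$ for a unique $B\in\Omega^2(X,\log D)$, and $dB=0$ since $\mu^*$ is injective on forms.

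Next, surjectivity and the reverse implication of admissibility: for \emph{any} closed $B\in\Omega^2(X,\log D)$ the form $\omega_0+\mu^*B$ is again toric log symplectic with momentum map $\mu$. It is closed and $T^n$-invariant, satisfies $\iota_\rho(\omega_0+\mu^*B)=\iota_\rho\omega_0=-\mu^*\xi$ because $\mu^*B$ is horizontal, keeps $\mu$ proper, and is log-nondegenerate: in the frame $\{\xi_1,\dots,\xi_n,dp_1,\dots,dp_n\}$ of $(TM(-\log Z))^*$ the matrix of $\omega_0$ is $\left(\begin{smallmatrix}0&I\\-I&0\end{smallmatrix}\right)$ and that of $\mu^*B$ is $\left(\begin{smallmatrix}B'&0\\0&0\end{smallmatrix}\right)$, whose sum is invertible for every antisymmetric $B'$. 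Combined with Step 1, this shows that the toric log symplectic structures on $M$ with momentum map $\mu$ are exactly the forms $\omega_0+\mu^*B$ with $B$ closed, so the map in question is $\omega_0+\mu^*B\mapsto[B]$, visibly onto $H^2(X,\log D)$.

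Now the Moser deformation showing equal cohomology classes give equivalent structures. Given $\omega_0+\mu^*B$ and $\omega_0+\mu^*B'$ with $B'-B=d\beta$, $\beta\in\Omega^1(X,\log D)$, set $\sigma_t=\omega_0+\mu^*\bigl(B+t(B'-B)\bigr)$, log symplectic for all $t$ by the previous step, and define the section $V_t$ of $TM(-\log Z)$ by $\iota_{V_t}\sigma_t=-\mu^*\beta$. The crucial structural point is that $V_t$ is \emph{vertical}: the vertical bundle $\mathcal V=\ker(d\mu)$ is Lagrangian for every $\sigma_t$ (it is isotropic for $\omega_0=\sum_k\xi_k\wedge dp_k$ since the $\xi_k$ are horizontal, the extra term $\mu^*B$ is horizontal, and $\rk\mathcal V=n$), whereas $\iota_{V_t}\sigma_t=-\mu^*\beta$ annihilates $\mathcal V$, so $V_t\in\mathcal V^{\perp_{\sigma_t}}=\mathcal V$; moreover $V_t$ is $T^n$-invariant because $\sigma_t$ and $\mu^*\beta$ are. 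Hence the flow $\phi_t$ of $V_t$ exists on $[0,1]$ (being vertical it stays in the compact $\mu$-fibres $T^n$), is $T^n$-equivariant, preserves $Z$ (as $V_t$ is a section of the log tangent bundle), and satisfies $\mu\circ\phi_t=\mu$. The standard computation $\tfrac{d}{dt}\phi_t^*\sigma_t=\phi_t^*\bigl(d\iota_{V_t}\sigma_t+\mu^*d\beta\bigr)=0$ then gives $\phi_1^*(\omega_0+\mu^*B')=\omega_0+\mu^*B$, so the two structures are equivalent.

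Finally, well-definedness on equivalence classes, which is the step I expect to be the main obstacle. An equivariant diffeomorphism $\psi$ of $M$ with $\mu\circ\psi=\mu$ restricts on each fibre $\mu^{-1}(x)\cong T^n$ to an equivariant self-map, hence a translation, so $\psi$ is a gauge transformation $\psi(m)=m\cdot h(\mu(m))$ for a smooth $h\colon X\to T^n$; then $\psi^*\xi_k=\xi_k$ and $\psi^*dp_k=dp_k+\mu^*\eta_k$ with $\eta=h^*(\text{Maurer--Cartan form})\in\Omega^1(X)\otimes\Tt$ closed, so $\psi^*(\omega_0+\mu^*B')=\omega_0+\mu^*\bigl(B'+\sum_k\xi_k\wedge\eta_k\bigr)$, and the class gets shifted by $\bigl[\sum_k\xi_k\wedge\eta_k\bigr]\in H^2(X,\log D)$. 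One must therefore show that this correction vanishes for every such $\eta$, i.e. that the ``gauge action'' of $[X,T^n]\cong H^1(X;\mathbb Z^n)$ on $H^2(X,\log D)$, $[\eta]\mapsto\bigl[\sum_k\xi_k\wedge\eta_k\bigr]$, is trivial. By the Mazzeo--Melrose decomposition~\eqref{mazmel} this correction is detected by its residues $\sum_k(\Res_{D_i}\xi_k)\,[\eta_k|_{D_i}]\in H^1(D_i)$ together with its $H^2(X)$-component, so the vanishing must be extracted from the structure of $\xi$ — in particular from the triviality of its affine monodromy, which makes each $[\xi_k]$ purely residual. Making this precise is what the classification really hinges on; at a minimum, restricting to gauge transformations isotopic to the identity makes $\psi^*$ act trivially on cohomology and well-definedness automatic, and granting the general case Steps 2--4 assemble into the stated bijection.
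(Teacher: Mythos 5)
Your Steps 1--3 are correct and follow exactly the route the paper intends: the paper gives no written argument beyond the phrase ``by an application of the Moser method,'' and your verification that $\omega-\omega_0$ is invariant and horizontal (hence equal to $\mu^*B$ for a closed $B$), that $\omega_0+\mu^*B$ is log-nondegenerate for \emph{every} closed $B$ (the block-matrix computation), and that the Moser field $V_t$ is vertical, equivariant, and complete because the fibres are compact tori, is precisely what that phrase is standing in for. These three steps already prove that $B\mapsto\omega_0+\mu^*B$ descends to a surjection from $H^2(X,\log D)$ onto equivalence classes, injective when equivalence is generated by equivariant isotopies.

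The issue you isolate in Step 4 is a genuine gap --- but it is a gap in the paper's statement as much as in your write-up, and your diagnosis of it is correct rather than incomplete. A fibre-preserving equivariant symplectomorphism is a gauge transformation $h:X\to T^n$, and your computation shows it shifts the class by $\bigl[\sum_k\xi_k\wedge\eta_k\bigr]$ with $[\eta]\in 2\pi H^1(X;\ZZ^n)$. Trivial affine monodromy kills only the $H^2(X)$-component of this shift; the Mazzeo--Melrose components $\sum_k(\Res_{D_i}\xi_k)\,[\eta_k|_{D_i}]\in H^1(D_i)$ survive and can be nonzero. For instance, on the four-square log affine $T^2$, each $D_i$ is a non-contractible circle with residue $v_i\neq 0$, and a gauge transformation winding along $D_i$ in a direction not annihilated by $v_i$ produces a nonzero shift; taking $B=0$ and $B'=-\sum_k\xi_k\wedge\eta_k$ then gives two forms that are equivalent under the paper's notion of equivalence ($\mu\circ\psi=\mu$) but have distinct classes. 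So no further argument ``extracted from the structure of $\xi$'' can close your Step 4: the bijection as stated is only correct if equivalence means equivariant isotopy (equivalently, gauge transformations homotopic to the identity, for which $\eta$ is exact and the shift vanishes, as you observe); under the full group of fibre-preserving equivariant symplectomorphisms the correct codomain is the quotient of $H^2(X,\log D)$ by the image of $[\eta]\mapsto\bigl[\sum_k\xi_k\wedge\eta_k\bigr]$. Your proposal, read with the isotopy interpretation of equivalence, is a complete and correct proof.
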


\begin{remark}
  The usual notions of invariant and basic forms on a principal bundle
  carry over to the logarithmic context above, because of the exact sequence
  relating logarithmic vector fields on the domain and codomain of $\pi$:
  \begin{equation}
    \label{eq:27}
    \xymatrix{0\ar[r] & T_{M/X}\ar[r] & T_M(-\log \mu^*D)\ar[r]^{\mu_*} & \mu^*T_X(-\log D)\ar[r] & 0},   
    \end{equation}
where $T_{M/X}$ is the vertical tangent bundle of the principal bundle $M$. 
  This is an example of an algebroid submersion (see \cite{GLP1}) and
  gives rise to an injective cochain homomorphism from the log de Rham complex of
  $(X,D)$ to the log de Rham complex of $(M,\mu^*D)$.
\end{remark}

\begin{remark}
  By an argument similar to that of Mazzeo-Melrose~\cite{MR1734130}
  (see Appendix~\ref{Mazzeo-Melrose}), one can show that for $D\subset
  X$ a normal crossing divisor, the logarithmic cohomology groups may
  be expressed in terms of usual de Rham cohomology:
  \[
  H^2(X,\log D) = H^2(X) \oplus \sum H^1(D_i)  \oplus \sum H^0(D_i\cap D_j),
  \]
  where we sum over the components $D_i$ of $D$ and their pairwise
  intersections.

  For example, the log affine structure on $S^2$ described in
  Example~\ref{spherethreecircles} has a divisor with three circular
  components intersecting in six points; as a result there is a
  10-dimensional space of toric log symplectic structures
  on the trivial $T^2$-bundle.
\end{remark}
\end{pp}

\subsection{Nontrivial principal case}

\begin{pp}
We now show that certain nontrivial principal $T^n$-bundles over the
log affine manifold $(X,D,\xi)$ admit toric log symplectic
structures.

\begin{defn}
  Let $\pi:M\to X$ be a principal $T^n$-bundle over the log affine
  $n$-manifold $(X,D,\xi)$, with real Chern classes 
  \(
  c_1(M) = (c_1^{1},\ldots c_1^n)\in H^2(X,\RR)\otimes \Tt.
  \)
The \textbf{obstruction class} of $M$ is then given by 
  \begin{equation}
    \label{eq:25}
    \mathrm{Tr}(c_1(M)\wedge [\xi]) = \sum_{k=1}^n c_1^k \wedge [\xi_k] \in H^3(X,\log D).
  \end{equation}
\end{defn}

\begin{theorem}\label{nontrivprinc}
With $M$ and $X$ as above, there exists a log symplectic form $\omega$ making $(M,\omega,\pi)$
  a toric log symplectic manifold if and only if the obstruction class~\eqref{eq:25} vanishes. 
  
  Under this condition, the space of equivalence classes of log
  symplectic forms making $(M,\omega,\pi)$ toric log symplectic is 
  an affine space modeled on the vector space $H^2(X,\log D)$.
\end{theorem}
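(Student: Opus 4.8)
The plan is to reduce Theorem~\ref{nontrivprinc} to the trivial-principal case treated earlier by patching together local symplectic models over an open cover of $X$. First I would fix a good open cover $\{U_\alpha\}$ of $X$, chosen so that each $U_\alpha$ is a tropical domain (or a welding chart) over which the principal bundle $\pi:M\to X$ trivializes, and so that the restricted log affine structure is standard. Over each $U_\alpha$, the restriction $M|_{U_\alpha}$ is then a trivial principal $T^n$-bundle, so by the theorem in the trivial principal case it carries the canonical log symplectic form $\omega_0^\alpha$ from~\eqref{eq:23}, and the full moduli of toric log symplectic forms on it is the affine space $H^2(U_\alpha,\log D|_{U_\alpha})$. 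The goal is to glue the local forms $\omega_0^\alpha$ into a global one; the discrepancy on overlaps is controlled by the connection data of $M$, and the obstruction to a coherent choice of gluing is exactly a \v{C}ech cocycle whose class I claim is $\mathrm{Tr}(c_1(M)\wedge[\xi])\in H^3(X,\log D)$.

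In more detail, I would choose a principal connection $\theta\in\Omega^1(M,\log\pi^*D)\otimes\Tt$ on $M$ (using the exact sequence~\eqref{eq:27} so that everything lives in the logarithmic complex) with curvature $F=d\theta\in\Omega^2(X,\log D)\otimes\Tt$ representing $c_1(M)$. The natural candidate for a global log symplectic form is $\omega = \langle\theta, d(\mu^*\xi)\rangle$-type expression — concretely, mimicking minimal coupling: on a local trivialization write $\omega_0^\alpha = \sum_k \xi_k\wedge dp_k$ and attempt to replace $dp_k$ by a globally defined object using $\theta$. The failure of closedness of the naive global form is $\mathrm{Tr}(F\wedge\xi) = \sum_k c_1^k\wedge\xi_k$, which is precisely~\eqref{eq:25}; hence a global closed nondegenerate log 2-form with the right contraction property~\eqref{eq:21} exists if and only if this class is exact, i.e. vanishes in $H^3(X,\log D)$. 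Nondegeneracy is checked fibrewise using the vertical coordinates and the nondegeneracy of $\xi$ as a coframe for $TX(-\log D)$, exactly as in the discussion following~\eqref{eq:23}.

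For the classification statement under the vanishing hypothesis: given two toric log symplectic forms $\omega,\omega'$ with the same momentum map $\pi$, the exact sequence~\eqref{eq:27} shows (as in the remark there) that $\omega-\omega'$ is basic and closed, hence pulled back from a class in $H^2(X,\log D)$; conversely adding $\mu^*B$ for closed basic $B$ preserves the toric log symplectic condition, and a logarithmic Moser argument — the log de Rham complex admits the same homotopy-operator formalism as the smooth one, since $TX(-\log D)$ is a genuine Lie algebroid — shows that only $[B]\in H^2(X,\log D)$ matters up to equivariant symplectomorphism. This gives the affine-space structure modeled on $H^2(X,\log D)$.

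The main obstacle I anticipate is the honest construction of the global form and the precise identification of its closedness defect with $\mathrm{Tr}(c_1(M)\wedge[\xi])$: one must perform minimal coupling in the logarithmic category, keeping careful track of residues of $\theta$ and $\xi$ along the components of $Z=\pi^*D$, and verify that the resulting 2-form is a genuine section of $\Omega^2(M,\log Z)$ (not just of $\Omega^2$ with worse singularities) and that its contraction with the infinitesimal torus action reproduces $-\mu^*\xi$ on the nose. The logarithmic Moser step is routine once one accepts that the standard Poincaré-lemma/homotopy operators work in $(\Omega^\bullet(X,\log D),d)$, which follows from the Mazzeo–Melrose description of the cohomology already invoked in the excerpt.
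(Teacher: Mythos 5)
Your proposal is correct and follows essentially the same route as the paper: the paper likewise trivializes $M$ over an open cover, places the standard form $\sum_k \xi_k\wedge dp_k$ on each piece, measures the overlap discrepancy via logarithmic connection forms $A_i^k$, corrects by a primitive $B$ of $\sum_k F^k\wedge\xi_k$ (whose existence is exactly the vanishing of the obstruction class), and obtains necessity by writing $\omega=\sum_k\theta^k\wedge\xi_k+\beta$ and differentiating. The classification step in the paper is also the Moser argument you describe, using properness of $\mu$ to guarantee completeness of the Moser flow.
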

\begin{proof}
  Let $(M,\omega,\pi)$ be as above. Choosing a principal connection
  $\theta\in \Omega^1(M)\otimes \Tt$, we may write 
  \begin{equation}\label{logsympformprinc}
  \omega = \sum_{k=1}^n \theta^k \wedge \alpha_k + \beta,
  \end{equation}
  where $\alpha_k, \beta$ are invariant and basic logarithmic forms.
  The momentum map condition~\eqref{eq:21} implies that $\alpha_k = \xi_k$.
  Taking derivatives, we obtain 
  \[
  \sum_{k=1}^n F^k \wedge \xi_k + d\beta = 0,
  \]
  where $[F^k] = c_1^k$, so that~\eqref{eq:25} vanishes, as required.

  Conversely, choose local trivializations for the principal 
  $T^n$-bundle $M$ over an open cover $\{U_i\}$ of $X$, and let
  \[
  \varphi_{ij}: (U_i\times T^n)|_{U_{i}\cap U_j}\to (U_j\times T^n)|_{U_{i}\cap U_j}
  \]
  be the transition isomorphisms satisfying the usual cocycle
  condition.  On the $k^\mathrm{th}$ circle factor, $\varphi_{ij}$ acts
  by multiplication by $g^{k}_{ij}:U_i\cap U_j\to S^1$.  Also, choose
  a principal connection, defined by connection forms
  $A^{k}_i\in\Omega^1(U_i,\log D)$ satisfying the cocycle condition
$i(A^{k}_i - A^{k}_j) = d\log g^{k}_{ij}$.

  We first equip each $U_i \times T^n$ with the standard
  log symplectic form given in~\eqref{eq:23}, namely,
  \[
  \tilde\omega_i = \sum_{k=1}^n \xi_k\wedge dp_k.
  \]
  If we write $g^{k}_{ij} = \exp(i\tau_{ij}^{k})$, then we have 
  \begin{equation}
    \label{eq:28}
    \varphi_{ij}^* \tilde\omega_j - \tilde\omega_i = \sum_{k=1}^n \xi_k \wedge d\tau_{ij}^{k}
    = \sum_{k=1}^n \xi_k \wedge (A^{k}_i - A^{k}_j).
  \end{equation}
  If the class~\eqref{eq:25} vanishes, then there
  exists $B\in \Omega^2(X,\log D)$ such that, over $U_i$, 
  \begin{equation}
    \label{eq:29}
    dB = \sum_{k=1}^n F^k \wedge \xi_k = \sum_{k=1}^n dA^{k}_i\wedge\xi_k.
  \end{equation}
Combining~\eqref{eq:28} and~\eqref{eq:29}, we see that the modified symplectic forms
\[
\omega_i = \tilde\omega_i + \mu^*(B|_{U_i} + \sum_{k=1}^n \xi_k\wedge A_i^k) 
\]
define a global form $\omega$ on $M$ rendering
$(M,\omega,\mu)$ toric Hamiltonian, as required.

Due to the choice of the 2-form $B$, we see that the symplectic form
is only uniquely determined modulo closed logarithmic 2-forms on $X$.
Applying the Moser method, which uses the properness of $\mu$, we
therefore obtain a free and transitive action of $H^2(X,\log D)$ on
the set of equivalence classes of toric Hamiltonian log symplectic
structures $(M,\omega,\mu)$, as needed.
\end{proof}

\begin{remark}
  For $D\subset X$ a normal crossing divisor, we have the following
  expression for the third logarithmic cohomology:
  \[
  H^3(X,\log D) = H^3(X) \oplus \sum H^2(D_i) \oplus \sum H^1(D_i\cap
  D_j)\oplus \sum H^0(D_i\cap D_j\cap D_k),
  \]
  where we sum over double and triple intersections of the components
  $D_i$ of $D$.  
  If we also have $\dim X = 2$, then this group
  vanishes. 
\end{remark}

\begin{example}
  For the log affine structure on $S^2$ described in
  Example~\ref{spherethreecircles}, we have a 10-dimensional affine
  moduli space of toric log symplectic structures on any
  principal $T^2$-bundle over $S^2$, including, for example, the Hopf
  manifold $S^3\times S^1$.
\end{example}

\begin{example}
  The log affine structure in Example~\ref{hex} has a divisor with six
  components intersecting in six $\RR^2$ fixed points. As a result, we
  obtain a 13-dimensional affine moduli space of toric log
  symplectic structures on any principal $T^2$-bundle over the
  orientable genus 2 surface.
\end{example}

\end{pp}

\begin{pp}

  Theorem~\ref{nontrivprinc} provides a classification of toric log symplectic
   structures on a principal $T^n$-bundle which satisfies
  condition~\eqref{eq:25}.  Note, however, that the notion of equivalence is 
  that of equivariant symplectomorphisms $\psi:M\to M$, in the sense that 
  $\mu\circ\psi = \mu$.  In some cases, there are additional symmetries which 
  do not commute with $\mu$; we describe them below.
  
  The main observation is that while toric log symplectic forms on such a
  principal bundle may be deformed by closed basic forms, the map on
  cohomology groups has a kernel:  we have the exact Gysin sequence 
  \begin{equation}
    \label{eq:30}
    \xymatrix@C=4em{H^0(X,\log D)\otimes \Tt^* \ar[r]^-{c_1(M)} & H^2(X,\log D)\ar[r]^{\mu^*} & H^2(M,\log \pi^* D)}.
  \end{equation}
Thus, we expect that 2-forms in the kernel of $\mu^*$
  would act trivially on the symplectomorphism class of $(M,\omega)$.
  
  Indeed, let $v = (v_1,\ldots, v_n) \in \Tt^*$ and consider the  class
  \[
  c_1(M)\cdot v = \sum_{k=1}^n v_k c_1^k  \in H^2(X,\log D).
  \]
  Choosing a connection $\theta$ on $M$ as in the proof of
  Theorem~\ref{nontrivprinc}, the above class gives rise to a 1-parameter 
  family of toric log symplectic structures 
  \[
  \omega_t = \omega + t \sum_{k=1}^n v_k F^k.
  \]
  The Moser argument then indicates that the flow of
  $X_t=\omega_t^{-1}(v\cdot \theta)$ trivializes the above family.
  This vector field is invariant and projects to the vector field
  $\rho(v)$ on the log affine manifold $X$. Therefore, $X_t$  is complete
  (allowing the Moser argument to proceed) if and only if $\rho(v)$ is complete,
  yielding the following result.

  \begin{theorem}
    If $\omega,\omega'$ are two toric log symplectic
    structures on the principal $T^n$-bundle $(M,\pi)$ over the
    complete log affine manifold $(X,D,\xi)$, and if
    $[\omega'-\omega]$ is of the form $c_1(M)\cdot v$ for $v\in\Tt^*$,
    then there is a symplectomorphism $\psi:(M,\omega)\to(M,\omega')$
    such that $\mu \circ\psi = T_v \circ \mu$, where $T_v: X\to X$ is
    the automorphism given by translation by $v$.
  \end{theorem}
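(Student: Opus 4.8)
The plan is to run a Moser-type argument adapted to the logarithmic/toric setting, exactly parallel to the argument already used in the proof of Theorem~\ref{nontrivprinc} and in the preceding paragraph, but keeping careful track of how the codomain moment map is affected. First I would fix a principal connection $\theta\in\Omega^1(M)\otimes\Tt^*$ as in the proof of Theorem~\ref{nontrivprinc} and use the hypothesis $[\omega'-\omega] = c_1(M)\cdot v = \sum_k v_k c_1^k$ to write $\omega' - \omega = \sum_k v_k F^k + d\gamma$ for some basic logarithmic $1$-form $\gamma$, where $F^k = dA^k$ are the curvature forms. Setting $\omega_t = \omega + t(\omega'-\omega)$, each $\omega_t$ is a toric log symplectic form with the \emph{same} moment map $\mu$ (the $v_k F^k$ term is basic and closed, so by the earlier Moser discussion it does not change the toric log symplectic property, only potentially the symplectomorphism class), so the path is a legitimate family of toric log symplectic structures interpolating $\omega$ and $\omega'$.

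Next I would solve the Moser equation. We seek a time-dependent logarithmic vector field $X_t$ on $M$ with $\iota_{X_t}\omega_t = -(v\cdot\theta) - \gamma$ (the precise primitive being dictated by writing $\tfrac{d}{dt}\omega_t = \omega'-\omega = d(v\cdot\theta + \gamma)$ using $dA^k=F^k$ and $\theta^k\equiv A^k$ in a local trivialization). Here the key point, already flagged in the paragraph preceding the theorem, is that $X_t = \omega_t^{-1}(v\cdot\theta + \gamma)$ is $T^n$-invariant and, crucially, $\mu_* X_t = \rho(v)$, the fundamental vector field of translation by $v$ on the log affine manifold $X$: this is because the $v\cdot\theta$ part of the primitive pairs under $\omega_t^{-1}$ against the momentum map condition~\eqref{eq:21} to produce exactly the infinitesimal $\RR^n$-action generating translations, while the basic part $\gamma$ contributes a vertical (hence $\mu$-projectable-to-zero) correction. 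Completeness of $X_t$ then follows from completeness of $\rho(v)$ on $X$ — which holds since $(X,D,\xi)$ is \emph{complete} log affine, so the $\RR^n$-action integrates and in particular the generator $\rho(v)$ is complete — together with properness of $\mu$, which confines trajectories to compact sets over any compact time interval; this is the same properness input used in Theorem~\ref{nontrivprinc}.

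Finally, let $\psi = \psi_1$ be the time-$1$ flow of $X_t$. The standard Moser computation gives $\frac{d}{dt}\psi_t^*\omega_t = \psi_t^*(\mathcal L_{X_t}\omega_t + \frac{d}{dt}\omega_t) = \psi_t^*(d\iota_{X_t}\omega_t + d(v\cdot\theta+\gamma)) = 0$, so $\psi^*\omega' = \omega$, i.e.\ $\psi:(M,\omega)\to(M,\omega')$ is a symplectomorphism; $T^n$-equivariance of $\psi$ is immediate from invariance of $X_t$. For the moment map statement, since $\mu_* X_t = \rho(v)$ is time-independent and generates the $1$-parameter group $\{T_{tv}\}$ of translations on $X$, naturality of flows under the $\mu$-related vector fields gives $\mu\circ\psi_t = T_{tv}\circ\mu$, hence at $t=1$, $\mu\circ\psi = T_v\circ\mu$, as claimed. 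I expect the main obstacle to be the completeness/properness bookkeeping in the Moser step: one must verify that $X_t$ is genuinely complete despite the logarithmic singularities of $\omega_t$ along $Z$, which I would handle by observing that $X_t$ is a section of $T M(-\log Z)$ (since $\omega_t$ is log-nondegenerate and the primitive is a log $1$-form), that it is tangent to $Z$, that $\mu$ is proper, and that its projection $\rho(v)$ is complete on the base — so trajectories neither escape to infinity nor reach $Z$ in finite time — exactly the package of hypotheses the theorem statement was designed to supply.
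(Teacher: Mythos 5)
Your proposal is correct and follows essentially the same route as the paper: the paper likewise defines $\omega_t=\omega+t\sum_k v_kF^k$, runs Moser with $X_t=\omega_t^{-1}(v\cdot\theta)$, observes that $X_t$ is invariant and projects to $\rho(v)$, and deduces completeness of $X_t$ from completeness of $\rho(v)$. Your only additions — absorbing the exact discrepancy $d\gamma$ into the primitive (rather than treating it separately via the earlier magnetic-term theorem) and spelling out the properness/completeness bookkeeping — are consistent refinements of the paper's sketch, not a different argument.
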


\end{pp}

\section{Polytopes and symplectic cutting}\label{cuts}
In this section, $(X,D,\xi)$ denotes a fixed complete log affine
$n$-manifold, possibly with corners, and we assume that it is a tropical welded space of the type
constructed in Section~\ref{weld}, so that all of its strata
(i.e., $\RR^n$-orbits) are affine spaces and $D$ is of normal crossing
type.  We use the notation $U=\RR^n$ as before, for convenience.  If 
the log affine manifold is the codomain of a tropical momentum map, we have $U = \Tt^*$.

\subsection{Log affine polytopes}

\begin{pp} Each stratum of $X$ is an affine space, where there is a
  clear notion of affine hyperplane. We extend this notion
  as follows.

\begin{defn}
  An \textbf{affine linear hypersurface} in the log affine manifold
  $(X,D,\xi)$ is a connected and embedded hypersurface in $X$, not
  contained in $D$, and invariant under the action of a linear
  hyperplane in $U$.
\end{defn}

\begin{defn}
  A \textbf{log affine polytope} is a closed equidimensional
  submanifold with corners of $X$, each of whose codimension 1 boundary
  strata has closure which is either:
  \begin{itemize}
  \item contained in $D$, and called a \textbf{singular face}, or
  \item contained in an affine linear hypersurface.  Such a stratum
    may intersect $D$, in which case we require it to intersect all 
    possible intersections of components of $D$ transversely (so its union with $D$ is
    also of normal crossing type), and we call the stratum a \textbf{log
      face}.  If the stratum does not intersect $D$, we call it an
    \textbf{interior face}.
  \end{itemize}
  The polytope is called \textbf{convex} when its intersection with each
  component of $X\setminus D$ is convex.
\end{defn}
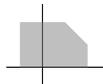
\begin{figure}[H]
    \centering
    \begin{tikzpicture}[scale=1.2]
\draw (-.4,0) -- (.7,0);
\draw (0,-.2) -- (0,.7);
\draw [draw=none,fill=gray, opacity=0.5]
      (-.25,0) -- (-.25,.5) -- (.25,.5) -- (.5,.25)-- (.5,0) -- cycle;
    \end{tikzpicture}
\caption{Convex log affine polytope with one singular, one interior, and three log faces}
\end{figure}
\end{pp}

\begin{pp}\label{pulbk}
  Suppose the hypersurface $H\subset X$ is not contained in $D$, but
  is affine linear for the hyperplane $U_H\subset U$.  If $H\cup
  D$ is normal crossing, then $H$ inherits a log affine structure,
  with divisor $D_H=D\cap H$ and trivialization
  $\xi_H\in\Omega^1(H,\log D_H)\otimes U_H$ given by the pull back of $\xi$ to $H$. As a result
  of this, we see that the closure of any log face contained in $H$ is
  itself a log affine polytope.

  Similarly, if $C$ is a component of the degeneracy divisor $D$, then
  it inherits a log affine structure, with divisor $D_C =
  \overline{(D\setminus C)} \cap C$ and trivialization
  $\xi_C\in\Omega^1(C,\log D_C)\otimes U_C$ given by the
  isomorphism of extensions
  \[
  \xymatrix@R=1em{ \underline{\RR}\ar[r] & TX(-\log D)|_C \ar[r] & TC(-\log D_C) \\
 \mathfrak{s}_C\ar[r]\ar[u] & U \ar[r]\ar[u]_{\xi} & U_C\ar[u]_{\xi_C}}
  \]
  where $\mathfrak{s}_C$ is the 1-dimensional stabilizer of a generic
  point on $C$.  Therefore, the closure of any singular face
  contained in $C$ is also a log affine polytope.  

  \begin{prop}
    The closure of any stratum of a log affine polytope is itself a
    log affine polytope.
  \end{prop}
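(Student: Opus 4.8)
The plan is to prove the statement by induction on $\dim X$. The only properties of $(X,D,\xi)$ that the argument uses are that every stratum is an affine space and that $D$ is of normal crossing type, and by Paragraph~\ref{pulbk} both properties are inherited by any component $C$ of $D$ and by any affine linear hypersurface $H\subset X$ with $H\cup D$ of normal crossing type; so the induction may be run over this broader class of log affine manifolds. The base case $\dim X=0$ is vacuous.

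For the inductive step I would take a stratum $S$ of the log affine polytope $P$. If $S$ is the open dense stratum of $P$, then $\overline{S}=P$ and there is nothing to do, so assume $S$ has positive codimension. Then $\overline{S}$ is contained in the closure $\overline{F}$ of some codimension~$1$ boundary stratum (facet) $F$ of $P$, and I would first record the elementary manifold-with-corners fact that $S$ is in fact a stratum of $\overline{F}$: since $F$ is always among the facets of $P$ whose closure passes through a given point of $\overline{F}$, every point of $\overline{F}$ of depth $k$ in $P$ has depth $k-1$ in $\overline{F}$, so the codimension drops by exactly one and $\overline{S}$ is equidimensional. Next I would invoke the trichotomy in the definition of a log affine polytope to produce, in each case, a log affine manifold $Y$ with $\dim Y=\dim X-1$, lying in the class above, such that $\overline{F}\subset Y$ is a log affine polytope:
(i) if $F$ is a \emph{singular} face, $\overline{F}$ lies in a component $C$ of $D$, and Paragraph~\ref{pulbk} supplies the log affine structure on $C$ (its divisor $D_C=\overline{(D\setminus C)}\cap C$ is normal crossing because $D$ is, and its strata are the affine $\Tt_C$-orbits) together with the statement that $\overline{F}$ is a log affine polytope in $C$;
(ii) if $F$ is a \emph{log} face, $\overline{F}$ lies in an affine linear hypersurface $H$ with $H\cup D$ normal crossing, and Paragraph~\ref{pulbk} again supplies the log affine structure on $H$ (with $D_H=D\cap H$ normal crossing and strata the affine slices of the strata of $X$) and the conclusion that $\overline{F}$ is a log affine polytope in $H$;
(iii) if $F$ is an \emph{interior} face, $\overline{F}$ lies in an affine linear hypersurface $H$; its intersection with $D$, if nonempty, occurs only along lower strata of $P$, where the manifold-with-corners structure of $P$ forces $H$ into normal crossing position with $D$, so the mechanism of case (ii) applies, and when $\overline{F}$ is disjoint from $D$ this is just the classical fact that a polytope in an affine space is a log affine polytope with empty divisor.

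Since in every case $\overline{S}$ is the closure of a stratum of the log affine polytope $\overline{F}$ in the lower-dimensional manifold $Y$ of the required type, the inductive hypothesis closes the argument; and if $P$ is assumed convex, so is $\overline{S}$, because intersecting a convex set with a component of $D$ or with an affine linear hypersurface preserves convexity in the induced structure. The hard part is not conceptual but bookkeeping: one must verify that the passages $X\rightsquigarrow C$ and $X\rightsquigarrow H$ really keep us inside the inductive class (affine strata, normal crossing divisor), and one must handle with care the manifold-with-corners combinatorics relating the strata of $P$, of $\overline{F}$, and of $\overline{S}$, in particular the depth-shift observation above and the equidimensionality of $\overline{S}$. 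All of the genuinely new geometric input, however, is already packaged in Paragraph~\ref{pulbk}.
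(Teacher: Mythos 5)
Your argument is correct and is essentially the paper's own: the paper states this proposition as an immediate consequence of the two induced-structure constructions in Paragraph~\ref{pulbk} (restriction to a component $C$ of $D$ and to an affine linear hypersurface $H$), and your induction on dimension, together with the manifold-with-corners bookkeeping placing each stratum inside the closure of a facet, is just the careful spelling-out of that iteration.
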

\end{pp}

\begin{pp}
  Any interior face $F$ of a log affine polytope is an affine polytope
  in the usual sense; if it has dimension $k$, its associated
  1-form $\xi\in \Omega^1(F)\otimes \RR^k$ defines a volume form $\det
  \xi = \xi_1\wedge \cdots \wedge \xi_k$. Thus, any compact 
  interior face has a well-defined volume in $\RR_+$. 

  Even logarithmic volume forms may have well-defined integrals over
  compact manifolds, by extracting the Cauchy principal value:
  combining the results of~\cite{Torres01062004} with Fubini's theorem
  (and using the normal crossing assumption on $D$), we see that any
  compact and oriented log affine polytope has a well-defined volume, as long as
  none of its faces is contained in $D$.
\begin{defn} \label{def: regvol}
  The \textbf{regularized volume} of a compact and oriented log affine polytope
  $\Delta\subset (X,D,\xi)$ without singular faces is the real number
  defined by the iterated Cauchy principal value
  \[
  \mathrm{Vol}(\Delta) = PV \int_{\Delta} \det \xi.
  \]
 \end{defn}
\end{pp}

\begin{example} \label{ex: regvol}
Consider the log affine manifold $(\RR^2, D, \xi)$, where $D=\{xy=0\}$  and $\xi = (x^{-1}dx, y^{-1}dy) \in \Omega^1(\RR^2, \log D) \otimes \RR^2$.   
Let $\Delta\subset\RR^2$ be the square log affine polytope with vertices $(2, 2)$, $(2, -1)$, $(-1, -1)$ and $(-1, 2)$. 
The regularized volume of $\Delta$ is then 
$$
	PV\int_\Delta \det \xi = \left(PV\int_{-1}^2 x^{-1} dx\right)\left(  PV \int_{-1}^2 y^{-1} dy\right) = (\log 2)^2.
$$
\end{example}

\begin{defn}
  An \textbf{affine linear function} on the log affine manifold $(X,D,\xi)$ is
  a function $f:X\to \RR$ such that there exists a covector $a\in
  (\RR^n)^*$ (called the \emph{linear part} of $f$) such that for all
  $u\in\RR^n$,
  \begin{equation}
    \label{eq:41}
    L_{\rho(u)} f = a(u),
  \end{equation}
	where $L_{\rho(u)} f$ is the Lie derivative of $f$ along the vector field $\rho(u)$.

In particular, when $X$ is complete, $f:X\to \RR$ is affine linear if $f(x + u) = f(x) + a(u)$ for
  all $x\in X$ and translations $u\in\RR^n$.
\end{defn}

\begin{pp}
  let $\Delta\subset X$ be a log affine polytope and $F\subset
  \Delta$ any nonsingular face.  Then there is an affine linear
  function $f$ defined in a tubular neighborhood $V_F$ of $F$
  such that $f|_F = 0$ and $f\geq 0$ on $\Delta\cap V_F$.  We call
  this a \emph{boundary defining function} for the face $F$.
\begin{lemma}
  Let $F$ be a log face with boundary defining function $f$.  If $F$
  intersects a component $D_i$ of the divisor $D$ with associated
  residue $v_i$, then the linear part of $f$ satisfies $a(v_i)=0$.
\end{lemma}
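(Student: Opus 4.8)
The plan is to deduce $a(v_i)=0$ from one structural fact about the residue — that the fundamental vector field $\rho(v_i)$ vanishes along the whole component $D_i$ — and then to evaluate at a point of $F\cap D_i$ the affine-linearity relation that defines $f$.

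First I would establish that $\rho(v_i)$ vanishes on $D_i$. Near $D_i$ the log affine manifold $(X,D,\xi)$ is locally one of the tropical domains of Section~\ref{sec2}, so by Proposition~\ref{residtrans} (and its iteration along crossings) the isomorphism between $TX(-\log D)$ and the abelian action algebroid carries $v_i$ to the normal Euler vector field of the boundary stratum $D_i$, which of course vanishes on $D_i$. Concretely, in the local normal form supplied by Saito's criterion~\cite{MR586450} one has coordinates $(x_1,\dots,x_n)$ with $D_i=\{x_1=0\}$ in which $TX(-\log D)$ is generated by $x_1\partial_{x_1},\partial_{x_2},\dots,\partial_{x_n}$; the section $\xi^{-1}(v_i)$ is then $x_1\partial_{x_1}$ up to terms vanishing on $D_i$, so its anchor $\rho(v_i)$ vanishes on $\{x_1=0\}$, and the presence of further poles $x_k^{-1}dx_k\otimes v_k$ at crossings does not affect this. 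Equivalently, $v_i$ spans the one-dimensional stabilizer subalgebra $\mathfrak{s}_{D_i}$ of every point of $D_i$ (cf. Paragraph~\ref{pulbk}).

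Granting this, the lemma follows in one line. Choose a point $p\in F\cap D_i$, which is nonempty by hypothesis; the boundary defining function $f$ is defined on the tubular neighbourhood $U_F\supset F$, hence near $p$, and affine linearity of $f$ says exactly that $L_{\rho(v_i)}f$ is the constant function $a(v_i)$. Evaluating at $p$ and using $\rho(v_i)(p)=0$ gives
\[
a(v_i)=\bigl(L_{\rho(v_i)}f\bigr)(p)=df_p\bigl(\rho(v_i)(p)\bigr)=df_p(0)=0 .
\]
I expect the only genuine content to be the first step — matching the abstract residue $v_i$ with the concrete Euler/stabilizer direction of $D_i$; nothing about the polytope $\Delta$ is used beyond the hypothesis that $F$ is a log face meeting $D_i$, and neither convexity nor properness of $\mu$ enters. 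It is worth noting the geometric reading of the conclusion: if $\Tt_H=\ker a$ denotes the hyperplane under which the ambient affine linear hypersurface $H\supset F$ is invariant, then $a(v_i)=0$ is precisely the statement $v_i\in\Tt_H$, i.e. the log face $F$ is ``parallel at infinity'' to the collapsing direction of $D_i$.
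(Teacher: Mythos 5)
Your proof is correct and follows essentially the same route as the paper's: the paper simply observes that a point $p\in F\cap D_i$ is fixed by the action of $v_i$, so $a(v_i)=L_{\rho(v_i)}f$ evaluates to zero there. Your additional verification that the residue $v_i$ corresponds to the Euler/stabilizer direction along $D_i$ is exactly the structural fact the paper takes as understood (cf. Proposition~\ref{residtrans} and the normal form~\eqref{eq:34}).
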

\begin{proof}
  If $p\in F\cap D_i$, then it is fixed by the action of $v_i$, 
  and so $L_{\rho(v_i)} f = a(v_i) = 0$, as needed.
\end{proof}

\begin{defn}
  An \textbf{elementary} log affine polytope is a convex log affine
  polytope $\Delta$ with $\Delta\setminus D$ connected. 
\end{defn}

\begin{lemma}
  Let $\Delta$ be an elementary log affine polytope, $F$ a
  nonsingular face of $\Delta$, and  $f$ a boundary defining
  function for $F$ with linear part $a$.  If a component $D_i$ of the divisor $D$ with
  associated residue $v_i$ meets $\Delta$ but does not intersect $F$,
  then $a(v_i) < 0$.
\end{lemma}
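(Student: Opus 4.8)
The plan is to reduce the statement to ordinary convex geometry in the single affine region of $X$ that contains $\Delta\setminus D$, the one extra ingredient being a dictionary relating the divisor component $D_i$ to the residue direction $v_i$.

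Since $\Delta$ is elementary, $\Delta\setminus D$ is connected, hence contained in a single component $R$ of $X\setminus D$; write $C=\Delta\cap R$, a closed (because $\Delta$ is closed in $X$) convex subset of the affine space $R\cong\RR^n$, of full dimension $n$. The face $F$ is nonsingular of codimension $1$, so $G:=\overline F\cap R=\overline F\setminus D$ is a closed convex subset of the affine hyperplane $H_0$ it spans in $R$. The given boundary defining function $f$ realizes $H_0$ as a local supporting hyperplane of $C$ along $F$, and since $C$ is convex this forces $f\ge 0$ on all of $C$; replacing $f$ by its unique global affine representative on $R$ (which has the same linear part $a$) we keep $f\ge 0$ on $C$ and, because a face of dimension $n-1$ of a full-dimensional closed convex body is exposed and equals the intersection with the hyperplane it spans, $C\cap\{f=0\}=G$. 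It suffices to prove $a(v_i)<0$ for this $f$.

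Next I set up the dictionary between $D_i$ and $v_i$. Near a smooth point $q\in D_i$, the local normal form $\xi=x_1^{-1}dx_1\otimes v_i+\sum_{j\ge2}dx_j\otimes w_j$ (with $D_i=\{x_1=0\}$) provides affine coordinates on $R$ in which translation by $v_i$ acts as $\partial/\partial(\log|x_1|)$; by the construction of the partial compactification, a point of $R$ converges to a point of $D_i$ precisely when its $v_i$-coordinate tends to $-\infty$ while the transverse coordinates stay bounded, and in that case the limit exists. Applying this to a sequence in the dense subset $\Delta\setminus D=C$ converging to a point of $D_i\cap\Delta$ (nonempty by hypothesis) shows that $C$ is unbounded in the $-v_i$ direction, hence $-v_i\in\mathrm{rec}(C)$; conversely, a recession ray of any convex subset of $R$ in the $-v_i$ direction converges onto $D_i$. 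Combining $-v_i\in\mathrm{rec}(C)$ with $f\ge0$ on $C$ gives $a(-v_i)\ge0$, i.e.\ $a(v_i)\le0$.

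To exclude $a(v_i)=0$: in that case $-v_i$ lies in the hyperplane direction $\ker a$ of $\{f=0\}$, so for $p\in G$ and $t\ge0$ we have $p+t(-v_i)\in C$ (as $-v_i\in\mathrm{rec}(C)$) and $f(p+t(-v_i))=f(p)+t\,a(-v_i)=0$, whence $p+t(-v_i)\in C\cap\{f=0\}=G$; thus $-v_i\in\mathrm{rec}(G)$. By the converse half of the dictionary above, a recession ray of $G\subseteq\overline F$ in the $-v_i$ direction converges onto $D_i$, so $\overline F\cap D_i\neq\emptyset$, contradicting the hypothesis that $D_i$ does not meet $F$. Hence $a(v_i)<0$. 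The step I expect to require the most care is the dictionary itself --- verifying uniformly, for a general tropical welded space, that accumulation of a sequence or ray in a region $R$ onto the boundary stratum $D_i$ is equivalent to escape to infinity along $-v_i$ with bounded transverse coordinates --- while the remaining ingredients (closedness of $C$, exposedness of the facet, the recession-cone inequalities) are routine.
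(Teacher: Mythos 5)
Your proof is correct and follows essentially the same route as the paper's: convexity places each point of $\Delta\setminus D$ on a ray in the $-v_i$ direction whose closure meets $D_i$ at infinity, and positivity of $f$ on the interior then forces $a(v_i)\le 0$. Your additional recession-cone argument ruling out $a(v_i)=0$ spells out the use of the hypothesis that $D_i$ does not meet $F$, a step the paper's two-sentence proof leaves implicit.
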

\begin{proof}
  By convexity, each point $p\in\Delta\setminus D$ lies on a unique
  straight line in the direction $v_i$ whose closure meets
  $D_i$ (at infinity in the $-v_i$ direction).  Since $f$ is positive
  on the interior of $\Delta$, it follows that $L_{\rho(v_i)}f <0$.
\end{proof}

\end{pp}  

\begin{pp}
Let $\Delta$ be an elementary log affine polytope in $(X, D, \xi)$. Since $\Delta\setminus D$ is connected, we might as well assume $X\setminus D$ has only one component and $D = \partial X$. By Proposition~\ref{prop: semi-local} and Theorem~\ref{thm: weld}, the log affine manifold $(X, \partial X, \xi)$ is classified by its associated labelled simplicial fan $\Sigma$ in a real vector space $U$. Uniquely defined up to a positive scalar,
  the boundary defining function $f_i$ for each of the nonsingular
  faces $F_i, i\in I$, of $\Delta$ has linear part $a_i$ which
  determines a half-space
  \[
  H_i = \{ u\in U : a_i(u) > 0\}.
  \]
  By the above Lemmas, $H_i$ does not intersect $\Sigma$.
  Compactness of $\Delta$ may be expressed in terms of these half
  spaces, as follows.
  \begin{prop}
    The elementary log affine polytope $\Delta$ is compact if and only if 
    \[
    \Sigma \sqcup \left(\cup_{i\in I} H_{i}\right) = U.
    \]
  \end{prop}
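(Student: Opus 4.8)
The plan is to pass from $\Delta$ to an ordinary polyhedron inside the interior affine space of $X$, and then to decide compactness of its closure directly from the fan.

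Write $X_0=X\setminus D$ for the open dense $\RR^n$-orbit, an affine space modeled on $U$, and set $P=\Delta\cap X_0$. Since $\Delta$ is equidimensional of dimension $n$ and elementary, $P$ is a nonempty convex subset of $X_0\cong U$ and $\Delta=\overline{P}$ (closure in $X$), the part of $\Delta$ lying in $D$ being a union of lower-dimensional faces. The singular faces of $\Delta$ lie in $D$ and hence do not meet $X_0$; each nonsingular face $F_i$, $i\in I$, lies in the affine hyperplane $\{f_i=0\}$ determined by its boundary defining function, with $\Delta$ on the side $\{f_i\ge 0\}$. Thus $P=\bigcap_{i\in I}\{x\in X_0:f_i(x)\ge 0\}$ is a polyhedron, whose recession cone is $\mathrm{rec}(P)=\{u\in U:a_i(u)\ge 0\text{ for all }i\in I\}$. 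Consequently $-\mathrm{rec}(P)=\{u:a_i(u)\le 0\ \forall i\}=U\setminus\bigcup_{i\in I}H_i$, so — using the disjointness $\Sigma_\sF\cap\bigcup_i H_i=\emptyset$ already recorded as a consequence of the two Lemmas above, and abbreviating the support of the fan by $\Sigma_\sF$ — the identity $\Sigma_\sF\sqcup\bigcup_{i\in I}H_i=U$ is equivalent to the inclusion $-\mathrm{rec}(P)\subseteq\Sigma_\sF$. It therefore suffices to prove that $\Delta$ is compact if and only if $-\mathrm{rec}(P)\subseteq\Sigma_\sF$.

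The geometric input is the behaviour of affine rays at infinity in a tropical domain. From the quotient model of the $\overline{X_0}_A$ in Proposition~\ref{compactsimplex}, the gluing of Definition~\ref{blocks}, and the sign convention of Definition~\ref{defcpct} and Proposition~\ref{residtrans} (the normal Euler field of a new boundary facet is the translation $\alpha$, so the stratum attached to $\IP{A}_+$ is approached by letting $x_0-t\sum_i c_i\alpha_i$ run to infinity with all $c_i\ge 0$), one sees that, for $u\neq 0$, the ray $\{x_0+tu:t\ge 0\}$ has compact closure in $X$ precisely when $-u\in\Sigma_\sF$, in which case it limits to the stratum of the cone containing $-u$ in its relative interior; if $-u\notin\Sigma_\sF$, then in every chart $\overline{X_0}_A$ some coordinate runs off to $\pm\infty$, so the ray leaves every compact subset of $X$. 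For the ``only if'' direction, suppose $\Delta$ compact and let $u\in\mathrm{rec}(P)$; then $x_0+tu\in P\subseteq\Delta$ for all $t\ge 0$, so the ray has a limit in $X$ and $-u\in\Sigma_\sF$. As $u$ was arbitrary, $-\mathrm{rec}(P)\subseteq\Sigma_\sF$.

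For the ``if'' direction, assume $-\mathrm{rec}(P)\subseteq\Sigma_\sF$ and invoke the Minkowski--Weyl decomposition $P=Q+\mathrm{rec}(P)$ with $Q\subset X_0$ a compact polytope. Any sequence in $\Delta$ may, $P$ being dense, be taken in $P$, say $p_n=q_n+c_n$ with $q_n\in Q$ and $c_n\in\mathrm{rec}(P)$; since $\Sigma_\sF$ is the finite union of the cones of the simplicial fan and each $-c_n$ lies in it, after passing to a subsequence all of the $-c_n$ lie in one maximal cone $\sigma=\IP{A}_+$. In the open chart $\overline{X_0}_A\subseteq X$, which by Proposition~\ref{compactsimplex} is diffeomorphic to $\RR_+^{|A|}\times\RR^{n-|A|}$ with $X_0$ embedded by the exponential in the $A$-directions, the summand $c_n\in-\sigma\subseteq\mathrm{span}(A)$ only drives the first $|A|$ coordinates of $p_n$ toward $0$, while the bounded term $q_n$ keeps those coordinates bounded above and keeps the transverse $\RR^{n-|A|}$ coordinates in a fixed compact set. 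Hence $(p_n)$ lies in a compact subset of this chart and has a subsequence converging in $X$; since $\Delta$ is closed in the locally compact space $X$, it is compact.

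The main obstacle is precisely this ``if'' direction: excluding escape to infinity along directions transverse to $\sigma$. What makes it work is that the unbounded part $c_n$ of $p_n$ lies in $\mathrm{span}(\sigma)$, so the entire non-compact behaviour is absorbed by the partial compactification attached to $\sigma$, the bounded summand $q_n$ controlling everything else. I also note that the disjointness of $\Sigma_\sF$ and $\bigcup_i H_i$, hence the statement in the form with $\sqcup$, relies on the standing convention that every component of $D$ meets $\Delta$ (equivalently, that $\Sigma_\sF$ is chosen minimal for $\Delta$); without it one reads the conclusion as the plain set equality $\Sigma_\sF\cup\bigcup_i H_i=U$.
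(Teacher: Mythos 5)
The paper states this proposition without proof, so there is nothing to compare against; your argument supplies a complete and correct one. The reduction is the natural one and all three ingredients check out: (i) since $D=\partial X$ for a tropical domain and $\Delta$ is a closed full-dimensional submanifold with corners, $P=\Delta\cap(X\setminus D)$ is a nonempty closed convex set whose boundary in $X\setminus D$ lies in the hyperplanes $\{f_i=0\}$, whence $P=\bigcap_i\{f_i\ge 0\}$, $\Delta=\overline{P}$, and $U\setminus\bigcup_i H_i=-\mathrm{rec}(P)$; (ii) the sign bookkeeping in the ray criterion is right — with the convention of Definition~\ref{defcpct}, the point $[(x_0+tu,1)]$ equals $[(x_0,(e^{t u_1},\ldots))]$ in the coordinates of $\ol{X}_A$, so the ray converges in $\ol{X}_A$ exactly when $-u\in\IP{A}_+$, and accumulation forces convergence because the exponents are affine in $t$; (iii) the Minkowski--Weyl step correctly isolates the only possible escape to infinity into the $\operatorname{span}(A)$ directions of a single cone, where it is absorbed by the corner $\RR_+^{|A|}$, the compact summand $Q$ bounding everything else. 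Your closing caveat is also well taken: the two Lemmas give $\Sigma_\sF\cap\bigcup_iH_i=\varnothing$ only for residues of components of $D$ meeting $\Delta$, so the statement with $\sqcup$ presupposes that every cone of $\Sigma_\sF$ corresponds to a stratum meeting $\Delta$ (as in the example following the proposition); otherwise the conclusion should be read as the inclusion $U\setminus\bigcup_iH_i\subseteq\Sigma_\sF$, which is what your argument actually proves to be equivalent to compactness. The only cosmetic point is that $\Sigma_\sF$ in the displayed identity must be read as the support $\bigcup_{A\in\sC_\sF}\IP{A}_+$, as you do.
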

\begin{example}
   Consider a square $s = (-1, 1]^2$ labeled with the vectors $(1, 0)$ and $(1, 1)$. The square $s$ defines a labeled simplicial fan $\Sigma = (\sF, \sC)$ in $\RR^2$ as in Definition~\ref{blocks}, where $\sF =\{(1, 0), (1, 1)\}$ and $\sC$ contain all subsets of $\sF$. Let $X_s$ be the associated log affine manifold as in Definition~\ref{def: X_c}, see also ii) of Example~\ref{ex: X_c}. Let $\Delta\subset X_s$ be the log affine
  polytope given by the closure of $\{f_1\geq 0\}\cap \{f_2\geq 0\}$, for 
  \[
  f_1(x,y) = - y,\qquad f_2(x,y) = -x+y.
  \]
  Then $\Delta$ is a compact convex log affine manifold with two
  singular faces and two log faces.  Figure~\ref{compdelt} shows the
  union of the fan defining $X_s$ with the pair of half-planes defined by the
  nonsingular faces of $\Delta\subset X$.
 \begin{figure}[H]
    \centering
    \begin{tikzpicture}
      \draw [draw=none,fill=gray, opacity=0.3]
      (0,0) -- (1,0) -- (1,1) -- cycle;
      \draw (0,0) -- (1,0);
      \draw (0,0) -- (1,1);
      \draw [dashed] (-1,-1) -- (0,0);
      \draw [dashed] (-1,0) -- (0,0);
      \draw [draw=none, pattern=north east lines, opacity = 0.2]
      (-1,-1)--(-1,1)--(1,1) -- cycle;
      \draw [draw=none, pattern=horizontal lines, opacity = 0.2]
      (-1,0)--(1,0)--(1,-1) -- (-1,-1)--cycle;
      \node at (-.6,.6) {\small $H_2$};
      \node at (0.4,-.6) {\small $H_1$};
      
    \end{tikzpicture}
    \caption{Fan and half-spaces defined by the compact polytope $\Delta$}\label{compdelt}
  \end{figure}
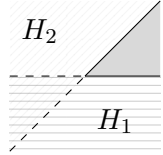

\end{example}

\end{pp}

\subsection{Symplectic cuts}\label{scut}

\begin{pp}

  Let $\Delta\subset X$ be a log affine polytope and $F\subset
  \Delta$ a nonsingular face with boundary defining function $f$.
  Let $(M,Z,\omega,\pi)$ be a principal toric Hamiltonian space over
  $X$, as in Theorem~\ref{nontrivprinc}. If the linear part of $f$ is
  a primitive integral vector $a\in\Tt = \RR^n$, then the Hamiltonian
  function $\pi^*f$ generates the action of a circle subgroup $S^1_a
  \subset T^n$ on $M$ and we may define a log symplectic cut of $M$
  along $\pi^{-1}(F)$, generalizing the usual symplectic
  cut~\cite{MR1338784} as follows. Note that while $f$ is only defined
  in a neighborhood of $F$, this is irrelevant to the symplectic cut
  procedure, in which we only modify $M$ near $\pi^{-1}(F)$.  We
  assume for simplicity that $\pi^*f$ is defined on all of $M$.

  \begin{theorem}
    Let $(M,Z,\omega)$ be a log symplectic manifold and $f:M\to
    \RR$ a Hamiltonian function generating an $S^1$-action.
    Suppose that the $S^1$-action is free along the orbit $f^{-1}(0)$.
    
    Then $M\times \CC$, equipped with the log symplectic form $\omega
    + idz\wedge d\bar z$, has an antidiagonal $S^1$-action whose
    Hamiltonian $\mu = f - |z|^2$ gives rise to a smooth log symplectic reduction 
    \begin{equation}
      \label{eq:45}
      M_f = \mu^{-1}(0)/S^1
    \end{equation}
    called the \textbf{log symplectic cut}.  The Hamiltonian $f$
    descends to a function $\tilde f$ on $M_f$, defining a residual
    $S^1$-action which now fixes the log symplectic submanifold
    $S=\tilde f^{-1}(0)$.  As in the usual symplectic cut, we 
    identify $S$ with the log symplectic reduction $f^{-1}(0)/S^1$,
    whereas $M_f\setminus S$ is isomorphic to $M\setminus f^{-1}(0)$.
  \end{theorem}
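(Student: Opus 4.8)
The plan is to carry the symplectic cut construction of~\cite{MR1338784} into the category of log symplectic manifolds; the one genuinely new point is to control how the degeneracy divisor meets the locus where the cut is performed, and the freeness hypothesis is exactly what makes this work.

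I would first assemble the reduction data. The product $M\times\CC$ carries the free divisor $Z\times\CC$ and the logarithmic $2$-form $\omega+i\,dz\wedge d\bar z$, which is nondegenerate on $T(M\times\CC)(-\log(Z\times\CC))\cong TM(-\log Z)\boxplus T\CC$, being the box sum of a log symplectic form and an honest symplectic form. Let $V=\omega^{-1}(df)\in\Gamma\!\big(TM(-\log Z)\big)$ be the logarithmic Hamiltonian vector field of $f$; since the $S^1$--action on $M$ is free along $f^{-1}(0)$, its generating section $V$ is nonvanishing on an open neighbourhood of $f^{-1}(0)$. Let $S^1$ act antidiagonally, by the flow of $V$ on $M$ and by $z\mapsto e^{-i\theta}z$ on $\CC$; with the sign conventions of the paper one checks directly that $\mu=f-|z|^2$ is a moment map for this action.

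The heart of the matter is a local claim: at every $p\in f^{-1}(0)$ the ordinary differential $df_p$ does not lie in the annihilator of the logarithmic tangent space $\varrho\big(TM(-\log Z)_p\big)\subseteq T_pM$. Indeed, any covector in that annihilator maps to zero in $(TM(-\log Z))^*_p$ (locally $dx_j=x_j\cdot\tfrac{dx_j}{x_j}$ dies in the dual algebroid along $Z$), so if $df_p$ were in it, then $\iota_V\omega=df$ would force $V_p=0$, contradicting that $V$ is nonvanishing near $f^{-1}(0)$. Hence $df_p\neq0$, so $0$ is a regular value of $f$ and $f^{-1}(0)$ is a smooth hypersurface; moreover $df_p$ restricts nontrivially to each stratum of $Z$ through $p$, so $f^{-1}(0)$ is transverse to $Z$ and $Z\cap f^{-1}(0)$ is again a free divisor in $f^{-1}(0)$ (normal crossing when $Z$ is). Using this transversality I would then check, exactly as classically: (i) $\mu^{-1}(0)\subset M\times\CC$ is a smooth submanifold --- at points with $z\neq0$ the $\CC$--part of $d\mu$ is already surjective, and at points with $z=0$ one has $f=0$ and $df\neq0$; (ii) $\mu^{-1}(0)$ is transverse to $Z\times\CC$ (surjectivity of $d|z|^2$ for $z\neq0$, of $df|_{TZ}$ for $z=0$), so $Z_\mu:=(Z\times\CC)\cap\mu^{-1}(0)$ is a free divisor; (iii) the antidiagonal $S^1$--action is free on $\mu^{-1}(0)$, acting by rotation of $z$ where $z\neq0$ and by the free action on $f^{-1}(0)$ where $z=0$. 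Hence $M_f=\mu^{-1}(0)/S^1$ is a smooth manifold (with corners, if $M$ has them) carrying the free divisor $Z_f=Z_\mu/S^1$.

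Next comes the logarithmic Marsden--Weinstein step, which is the classical argument transported verbatim into the algebroid. The closed $S^1$--invariant logarithmic $2$-form $(\omega+i\,dz\wedge d\bar z)|_{\mu^{-1}(0)}$ has, at each point, kernel inside $T\mu^{-1}(0)(-\log Z_\mu)$ equal to the line spanned by the infinitesimal generator of the $S^1$--action (which does restrict to a section of that algebroid); this is the same coisotropic-reduction computation as in the symplectic case, performed fibrewise, using that $\ker d\mu$ inside the algebroid is the $\omega$--orthogonal of the orbit directions. Therefore it descends to a closed, nondegenerate logarithmic $2$-form $\omega_f\in\Omega^2(M_f,\log Z_f)$, the desired log symplectic structure. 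Since $f$ is invariant under its own Hamiltonian flow it descends to $\tilde f:M_f\to\RR_{\geq0}$, and on $\mu^{-1}(0)$ one has $f=|z|^2$, so $\tilde f^{-1}(0)$ is the image of $f^{-1}(0)\times\{0\}$, i.e.\ $S:=f^{-1}(0)/S^1$, the log symplectic reduction of $M$ at the regular value $f=0$ --- a log symplectic submanifold of $M_f$, fixed by the residual circle descending from the rotation of $z$. Finally, the slice $z=\sqrt{f(m)}\in\RR_{>0}$ identifies $M_f\setminus S$ with the open subset $\{f>0\}\subseteq M$ (the relevant component of $M\setminus f^{-1}(0)$); there $z$ is real, so $dz\wedge d\bar z=0$ and $\omega_f$ pulls back to $\omega$, making the identification a log symplectomorphism. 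Thus $M_f$ is $\{f\geq0\}\subseteq M$ with its boundary hypersurface $f^{-1}(0)$ collapsed along $S^1$ to $S$, and the only step that is not a word-for-word repetition of~\cite{MR1338784} is the transversality claim of the third paragraph, which is precisely where the freeness hypothesis is used.
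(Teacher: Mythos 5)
Your proof is correct and follows essentially the same route as the paper's: the freeness hypothesis gives a nonvanishing generator of the action as a section of $TM(-\log Z)$, nondegeneracy of $\omega$ then makes $df=-i(\del_\theta)\omega$ a nowhere-vanishing logarithmic $1$-form (which is exactly the transversality hypothesis of the paper's log symplectic reduction proposition), freeness of the antidiagonal action on $\mu^{-1}(0)$ is immediate, and the remainder is the classical symplectic-cut argument transported to the log category. Your only deviations are to spell out the coisotropic-reduction step that the paper delegates to its Proposition on log symplectic reduction, and to observe (correctly, and a bit more precisely than the statement itself) that $M_f\setminus S$ is identified with the locus $\{f>0\}$ rather than all of $M\setminus f^{-1}(0)$.
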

  \begin{proof}
    Since the $S^1$-action is free along $f^{-1}(0)$, the generating
    vector field $\del_\theta$ is nowhere vanishing there, and so by
    the non-degeneracy of $\omega$ along this locus, $ df =
    -i_{\del_\theta}\omega$ is a nowhere vanishing log 1-form. Hence 
    \[
    df: TM(-\log Z)|_{f^{-1}(0)} \to T_0\RR
    \]
    is surjective, verifying the transversality condition of
    Proposition~\ref{logred}.  This immediately implies that the
    transversality condition for $\mu$ also holds.  The same freeness
    assumption also implies that the antidiagonal $S^1$-action on
    $\mu^{-1}(0)$ is free, giving a smooth log symplectic quotient by
    Proposition~\ref{logred}.  The remaining statements follow from the same
    arguments given for usual symplectic cutting.
  \end{proof}

\end{pp}

\begin{pp}
  The above construction allows us, just as in the usual Delzant
  theory, to iteratively apply symplectic cuts along the nonsingular
  faces of $\Delta$, under the usual Delzant condition that whenever
  $k$ such faces meet, we may choose boundary defining functions whose
  linear parts define an integral basis for a maximal rank $k$
  sublattice of $\ZZ^n$.  
\begin{defn}\label{delzantcondition}
  The log affine polytope $\Delta$ satisfies the \textbf{Delzant
    condition} when, for any point $p\in\Delta$, the collection of $k$
  nonsingular faces meeting $p$ defines an integral basis for a
  maximal rank $k$ sublattice of $\ZZ^n\subset \Tt$. In this case, 
  $\Delta$ is called a \textbf{Delzant log affine polytope}.
\end{defn}

\begin{corollary}\label{sympcutcon}
  Given any log affine polytope $\Delta$ in the log affine manifold
  $(X,D,\xi)$, and given Chern classes $(c_1^1,\ldots c_1^n)\in
  H^2(X,\RR)\otimes\Tt$ such that the obstruction class defined
  in~\eqref{eq:25} vanishes, we may
  construct a corresponding principal toric Hamiltonian log symplectic
  manifold $(\widetilde M, \widetilde Z, \widetilde \omega)$ with $Z$ of normal
  crossing type and momentum map to $X$ as in
  Theorem~\ref{nontrivprinc}.

  If $\Delta$ satisfies the Delzant condition, we may apply symplectic
  cuts to $\widetilde M$ along all of the nonsingular faces of $\Delta$ to
  produce another toric Hamiltonian log symplectic manifold
  $(M,Z,\omega)$, also with $Z$ of normal crossing type, with an
  identical momentum map image.  The toric orbit type stratification
  of $(M,Z)$ then coincides with the stratification of $\Delta$, via the
  momentum map.

  If, in addition, $\Delta$ has no singular faces, then $(M,Z,\omega)$ is a 
  smooth log symplectic manifold without boundary. 
\end{corollary}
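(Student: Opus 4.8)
The plan is to assemble the statement from three ingredients already in place. First, for the existence of the initial principal model $(\widetilde M,\widetilde Z,\widetilde\omega)$: since we are given Chern classes $(c_1^1,\ldots,c_1^n)$ satisfying the vanishing of the toric log obstruction class~\eqref{eq:25}, Theorem~\ref{nontrivprinc} directly produces a principal $T^n$-bundle over $X$ carrying a toric Hamiltonian log symplectic structure with momentum map the bundle projection, and with degeneracy locus $\widetilde Z = \pi^{-1}(D)$. Because $D$ is of normal crossing type in $X$ (by our standing assumption that $X$ is a tropical welded space) and $\pi$ is a submersion, the pullback $\widetilde Z$ is again normal crossing; this is the only point where the normal-crossing hypothesis on $X$ is used for this part.

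Second, I would run the iterated symplectic cut. For each nonsingular face $F_i$ of $\Delta$, pick a boundary defining function $f_i$ whose linear part $a_i$ is a primitive integral vector (using the Delzant condition to arrange that at each point where several faces meet, the linear parts form an integral basis of a rank-$k$ sublattice of $\ZZ^n$). The Hamiltonian $\pi^* f_i$ generates a free circle action along $(\pi^* f_i)^{-1}(0) = \pi^{-1}(F_i)$ — freeness follows because the $S^1_{a_i}$-action on the principal $T^n$-bundle is free whenever $a_i$ is primitive — so the log symplectic cut of the previous paragraph's Theorem applies and yields a new toric Hamiltonian log symplectic manifold. The Delzant condition guarantees that the cuts along distinct faces meeting at a point can be performed simultaneously (equivalently, in any order) and that the resulting corners are smooth; the $Z$ of the outcome remains normal crossing since cutting only modifies $M$ near $\pi^{-1}(F_i)$, away from (or transversely to, in the case of a log face) the divisor, by the transversality required of log faces in the definition of a log affine polytope. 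The momentum image is unchanged at each step because the cut replaces $\{f_i < 0\}$ by its boundary $\{f_i = 0\}$ in the base, which is already the face $F_i$ of $\Delta$; hence after all cuts the momentum image is exactly $\Delta$, and the orbit-type stratification of $(M,Z)$ matches the face stratification of $\Delta$ stratum-by-stratum (interior faces $\leftrightarrow$ symplectic toric strata, log faces $\leftrightarrow$ the cut loci, singular faces $\leftrightarrow$ components of $Z$ via the induced log affine structure of Paragraph~\ref{pulbk}).

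Third, the final clause: if $\Delta$ has no singular faces, then every codimension-1 boundary stratum of $\Delta$ arises from a symplectic cut along an interior or log face, and each such cut produces a closed log symplectic submanifold $S$ rather than a boundary. Since there are no faces contained in $D$, no boundary of $M$ is created at any stage, and the Delzant condition ensures the higher-codimension intersections are modeled on smooth corners coming from the torus geometry rather than genuine manifold corners; hence $M$ is a smooth log symplectic manifold without boundary.

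The main obstacle is the second step: verifying that the symplectic cuts along several faces can be carried out compatibly and that the resulting manifold is globally smooth with $Z$ still normal crossing. This requires checking that a log face, which is permitted to meet $D$, meets each intersection of components of $D$ transversely (this is built into the definition of log affine polytope), so that cutting along $\pi^{-1}(F_i)$ does not destroy the normal crossing structure of $Z$ near $F_i \cap D$; and it requires the local model computation near a point where $k$ faces and some components of $D$ all meet, showing the cut produces the standard corner $\RR_+^k \times \RR^{n-k}$ times the divisor directions. Both are local statements that follow from the Delzant condition together with the normal-crossing transversality in the definition of the polytope, but writing them carefully is where the real work lies.
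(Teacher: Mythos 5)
Your proposal is correct and follows essentially the same route the paper intends: Theorem~\ref{nontrivprinc} supplies the principal model $(\widetilde M,\widetilde Z,\widetilde\omega)$, and the corollary is then obtained by iterating the log symplectic cut of Section~\ref{scut} along the nonsingular faces, with the Delzant condition guaranteeing freeness of the successive circle actions and smoothness of the resulting corners (the paper states the corollary without further proof, treating exactly these points as the content). Your added care about transversality of log faces to the strata of $D$ and about the local model near points where several faces and divisor components meet is a faithful expansion of what the paper leaves implicit rather than a divergence from it.
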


\begin{example}\label{gen1}
  On the log affine manifold $(X,D,\xi)$ constructed in
  Example~\ref{hex}, we choose an affine linear function $h$ whose
  linear part vanishes on the vectors $\alpha=a=-\delta=-d$ and is
  positive on $\epsilon=e$ and $\zeta=f$.  The function $h$ defines a
  compact log affine polytope $\Delta$ with a single log face and no
  singular faces.  In Figure~\ref{fig:halfspaces}, we show the fans
  corresponding to the strata of $X$ which meet $\Delta$, as well as
  the half-spaces $H_1,\ldots, H_4$ defined by the linear parts of $h$.  In Figure~\ref{fig:genus2}, we show the orientable
  genus $2$ surface $X$ together with its decomposition into four
  hexagonal components; we see that the log affine polytope $\Delta$ is 
  a 2-dimensional 
  submanifold of genus $1$ with a single log face.

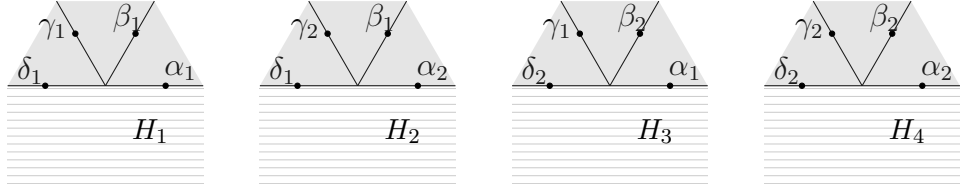
\begin{figure}[H]
    \centering
    \begin{tikzpicture}
      \node at (1,.2)  {$\alpha_1$};
      \node at (.3,.9) {$\beta_1$};
      \node at (-.7,.75) {$\gamma_1$};
      \node at (-1,.2) {$\delta_1$};
      
      \draw [-,scale = 1.3] (0,0) -- (1,0);
      \draw [-,scale = 1.3] (0,0) -- (.5,0.866);
      \draw [-,scale = 1.3] (0,0) -- (-.5,0.866);
      \draw [-,scale = 1.3] (0,0) -- (-1,0);

      \draw [draw=none, pattern=horizontal lines, opacity = 0.2,scale=1.3]
      (1,0) -- (1,-1) -- (-1,-1) -- (-1,0) -- cycle;
      \node at (.6,-.6) {\small $H_1$};
 
      \draw [draw=none,fill=gray, opacity=0.2,scale = 1.3]
      (1,0) -- (.5,.866) -- (-.5,.866) -- (-1,0) -- cycle;
             \draw[fill=black] (.8,0) circle [radius=1pt];
      \draw[fill=black] (.8*.5,.8*0.866) circle [radius=1pt];
      \draw[fill=black] (.8*-.5,.8*0.866) circle [radius=1pt];
      \draw[fill=black] (.8*-1,0) circle [radius=1pt];

    \end{tikzpicture}
\hspace{1em}
    \begin{tikzpicture}
      \node at (1,.2)  {$\alpha_2$};
      \node at (.3,.9) {$\beta_1$};
      \node at (-.7,.75) {$\gamma_2$};
      \node at (-1,.2) {$\delta_1$};
      
      \draw [-,scale = 1.3] (0,0) -- (1,0);
      \draw [-,scale = 1.3] (0,0) -- (.5,0.866);
      \draw [-,scale = 1.3] (0,0) -- (-.5,0.866);
      \draw [-,scale = 1.3] (0,0) -- (-1,0);
      \draw [draw=none, pattern=horizontal lines, opacity = 0.2,scale=1.3]
      (1,0) -- (1,-1) -- (-1,-1) -- (-1,0) -- cycle;
      \node at (.6,-.6) {\small $H_2$};
      
      \draw [draw=none,fill=gray, opacity=0.2,scale = 1.3]
      (1,0) -- (.5,.866) -- (-.5,.866) -- (-1,0) -- cycle;
        \draw[fill=black] (.8,0) circle [radius=1pt];
      \draw[fill=black] (.8*.5,.8*0.866) circle [radius=1pt];
      \draw[fill=black] (.8*-.5,.8*0.866) circle [radius=1pt];
      \draw[fill=black] (.8*-1,0) circle [radius=1pt];

    \end{tikzpicture}
\hspace{1em}
    \begin{tikzpicture}
      \node at (1,.2)  {$\alpha_1$};
      \node at (.3,.9) {$\beta_2$};
      \node at (-.7,.75) {$\gamma_1$};
      \node at (-1,.2) {$\delta_2$};
      
      \draw [-,scale = 1.3] (0,0) -- (1,0);
      \draw [-,scale = 1.3] (0,0) -- (.5,0.866);
      \draw [-,scale = 1.3] (0,0) -- (-.5,0.866);
      \draw [-,scale = 1.3] (0,0) -- (-1,0);
      
      \draw [draw=none, pattern=horizontal lines, opacity = 0.2,scale=1.3]
      (1,0) -- (1,-1) -- (-1,-1) -- (-1,0) -- cycle;
      \node at (.6,-.6) {\small $H_3$};

      \draw [draw=none,fill=gray, opacity=0.2,scale = 1.3]
      (1,0) -- (.5,.866) -- (-.5,.866) -- (-1,0) -- cycle;
        \draw[fill=black] (.8,0) circle [radius=1pt];
      \draw[fill=black] (.8*.5,.8*0.866) circle [radius=1pt];
      \draw[fill=black] (.8*-.5,.8*0.866) circle [radius=1pt];
      \draw[fill=black] (.8*-1,0) circle [radius=1pt];

          \end{tikzpicture}
\hspace{1em}
    \begin{tikzpicture}
      \node at (1,.2)  {$\alpha_2$};
      \node at (.3,.9) {$\beta_2$};
      \node at (-.7,.75) {$\gamma_2$};
      \node at (-1,.2) {$\delta_2$};
      
      \draw [-,scale = 1.3] (0,0) -- (1,0);
      \draw [-,scale = 1.3] (0,0) -- (.5,0.866);
      \draw [-,scale = 1.3] (0,0) -- (-.5,0.866);
      \draw [-,scale = 1.3] (0,0) -- (-1,0);
      
      \draw [draw=none, pattern=horizontal lines, opacity = 0.2,scale=1.3]
      (1,0) -- (1,-1) -- (-1,-1) -- (-1,0) -- cycle;
      \node at (.6,-.6) {\small $H_4$};

      \draw [draw=none,fill=gray, opacity=0.2,scale = 1.3]
      (1,0) -- (.5,.866) -- (-.5,.866) -- (-1,0) -- cycle;
        \draw[fill=black] (.8,0) circle [radius=1pt];
      \draw[fill=black] (.8*.5,.8*0.866) circle [radius=1pt];
      \draw[fill=black] (.8*-.5,.8*0.866) circle [radius=1pt];
      \draw[fill=black] (.8*-1,0) circle [radius=1pt];

    \end{tikzpicture}
\hspace{1em}
\caption{Fan and half-spaces defined by polytope $\Delta$ in Example~\ref{gen1}}
\label{fig:halfspaces}
\end{figure}

\begin{figure}[H]
\centering
\includegraphics[scale=.25]{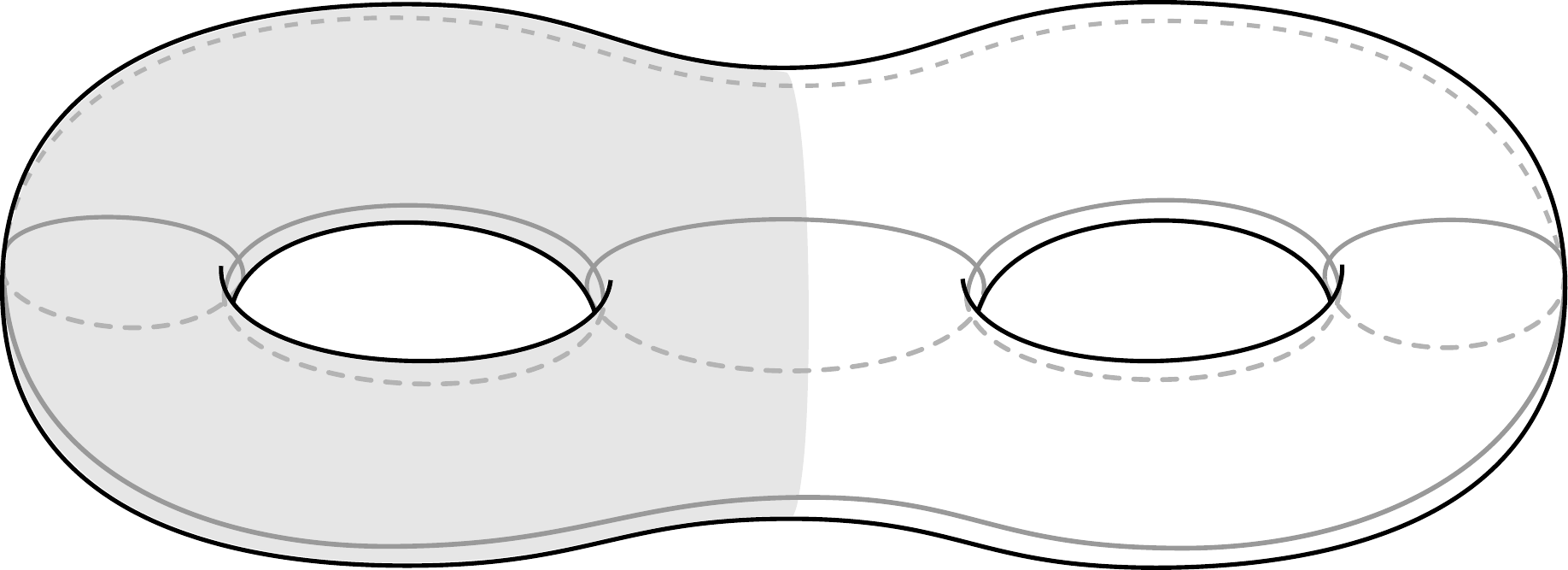}
\caption{Log affine polytope of genus 1 with single log face, sitting
  in a log affine manifold of genus 2 as in Example~\ref{hex}. }
\label{fig:genus2}
\end{figure}

This example demonstrates that log affine polytopes may have
nontrivial topology and so toric Hamiltonian log symplectic manifolds
need not be equivariantly diffeomorphic to usual toric symplectic
manifolds, or even quasitoric manifolds.  In fact, we may take the
trivial principal $T^2$-bundle over the polytope $\Delta$, i.e., the shaded area in Figure~\ref{fig:genus2}, and perform a symplectic cut along the unique log face, i.e., the boundary of $\Delta$. By Proposition 1 in~\cite{MR0375320}, we conclude that the resulting manifold is
diffeomorphic to $S^1 \times ((S^1\times S^2)\# (S^1\times S^2))$, and
by Lemma 2.3 of~\cite{MR1707526}, this manifold, although it is almost
complex, does not admit any symplectic structure, by an argument
involving the vanishing of Seiberg-Witten invariants.  Nevertheless, it admits
a log symplectic structure by our construction.
\end{example}

\end{pp}

\section{Delzant correspondence}\label{sec5}

\begin{pp}
  In this section, we establish an analogue of the Delzant
  correspondence for a class of toric Hamiltonian log symplectic
  manifolds.  The correspondence differs from Delzant's 
  in several important ways. First, the analog of the Delzant
  polytope is a log affine polytope $\Delta$ sitting in a tropical welded space; it need not be contractible. Second,
  the $n$-dimensional polytope $\Delta$ is decorated by an $n$-tuple
  of integral classes in $H^2(\Delta)$. Finally, with the above data
  fixed, the equivariant isomorphism class of the log symplectic form
  is unique only up to addition of a magnetic term coming from the
  logarithmic cohomology of $\Delta$.
\end{pp}

\begin{pp}
  We prove an analogue of the Delzant classification for compact orientable
  log symplectic manifolds $(M,Z,\omega)$, under the assumption that $Z$ is 
  a real divisor of normal crossing type and such that $\del M\subset Z$.
  We assume that there is an effective action of $T^n$ preserving the 
  structure which is \emph{Lagrangian}, meaning that the infinitesimal
  action $\rho:\Tt\to C^\infty(M,TM)$ satisfies $\omega(\rho(a),\rho(b))=0$ 
  for all $a,b\in\Tt$.  

	\begin{theorem} \label{delzant}
	 
		Let $(M,Z,\omega)$ be an orientable compact and connected log symplectic manifold 
		with $Z$ of normal crossing type and $\del M\subset Z$,
		equipped	 with an effective Lagrangian $T^n$-action.  
		Then the following hold:
	\begin{enumerate}
	\item[1.]		
		The quotient $M/T^n$ defines a compact and connected Delzant log affine polytope $(\Delta,D,\xi)$ with $D$ of normal crossing type.  Furthermore, the quotient map 
		$\mu: M\to \Delta$ is a tropical momentum map. 
			
	\item[2.] If we fix the equivalence class of the log affine polytope $\Delta$ and the $n$-tuple of Chern classes of the symplectic uncut $\widetilde M$ of $M$ (see Definition~\ref{defn: sympl uncut}), then the space of equivariant isomorphism classes of $(M,Z,\omega,\mu)$ is an affine space modeled on the vector space $H^2(\Delta,\log D)$.
	\end{enumerate}
	If, in addition, the action is Hamiltonian in the sense of Definition~\ref{torlog}, so that $(\Delta,D,\xi)$ has trivial affine monodromy, then we also have:
\begin{enumerate}
	\item[3.] 
	$\Delta$ is equivalent to a convex log affine polytope in a tropical welded space as in Section~\ref{weld}.   
\end{enumerate}
In particular, $(M,Z,\omega)$ is equivariantly isomorphic to 
	a toric Hamiltonian log symplectic manifold of the type 
	constructed in Corollary~\ref{sympcutcon}.

  \end{theorem}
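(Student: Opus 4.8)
The plan is to adapt the classical proof of the Delzant correspondence---local normal forms near every orbit, descent of the contracted form to the orbit space, and reconstruction by symplectic cutting---working throughout with the Lie algebroid $TM(-\log Z)$ in place of $TM$ and tracking the extra cohomological data via the Mazzeo--Melrose decomposition~(\ref{Mazzeo-Melrose}). The first step is a local normal form for the Lagrangian $T^n$-action near each orbit. On $M\setminus Z$ the action is a genuine Hamiltonian toric action on a symplectic manifold, so the Marle--Guillemin--Sternberg/Delzant model applies: near an orbit of isotropy rank $k$ the local quotient is a standard corner $\RR_{\geq 0}^k\times\RR^{n-k}$, and the $k$ local boundary defining functions have linear parts spanning a rank-$k$ direct summand of $\ZZ^n\subset\Tt$. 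Near a smooth point of $Z$ one verifies a logarithmic Darboux form $\omega = t^{-1}dt\wedge d\theta_1+\sum_{i\ge 2}dx_i\wedge d\theta_i$ in which $T^n$ acts by translation in the angles and $Z=\{t=0\}$, so that $\mu$ is locally $t\mapsto\log|t|$ in the first coordinate, producing the reflection picture of a log affine manifold across $D$; iterating at a $k$-fold crossing realizes $Z$ locally as $\mu^{-1}$ of a union of coordinate hyperplanes. Since $\partial M\subset Z$, the manifold-with-corners boundary of $M$ is accounted for entirely by these degeneracy models. Patching these charts shows that $\mu:M\to\Delta:=M/T^n$ is a proper, surjective, smooth algebroid submersion onto a compact (from compactness of $M$) connected (from connectedness of $M$) manifold with corners, each of whose codimension-$1$ strata is either the image of a component of $Z$ (a singular face) or is invariant under a linear hyperplane in $\Tt$, hence an interior or log face.

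Next, the contracted form $i_\rho\omega\in\Omega^1(M,\log Z)\otimes\Tt^*$ is closed (Cartan's formula plus invariance of $\omega$) and $T^n$-basic (horizontal by the Lagrangian condition), so it descends along $\mu$ to a closed logarithmic $1$-form $\xi\in\Omega^1(\Delta,\log D)\otimes\Tt^*$ with $D=\mu(Z)$. Nondegeneracy of $\omega$ and the Lagrangian condition identify $\mu^*T\Delta(-\log D)$ with the $\omega$-annihilator of the vertical bundle and hence with the trivial bundle $\Tt^*$, so $\xi$ is pointwise nondegenerate and $(\Delta,D,\xi)$ is a log affine manifold in the sense of Definition~\ref{logafin}; $\mu$ then satisfies~\eqref{eq:21} tautologically and is therefore a tropical momentum map. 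The local integral bases from the first step are precisely the Delzant condition of Definition~\ref{delzantcondition}, and $\Delta$ is a log affine polytope by the face analysis above. This establishes Part 1.

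For Part 2 I would reconstruct $M$ by reversing the symplectic cuts of Section~\ref{scut}: undoing the cut along each nonsingular face of $\Delta$ attaches the corresponding half-space and yields the symplectic uncut $\widetilde M$, a principal $T^n$-bundle over an ambient log affine manifold $X\supset\Delta$. By Theorem~\ref{nontrivprinc} the toric log obstruction class~\eqref{eq:25} of $\widetilde M$ vanishes and its toric log symplectic structures form an affine space over $H^2(X,\log D)$; transporting this through the cuts, and using the Gysin sequence~\eqref{eq:30} to discard the part that acts trivially, identifies the residual magnetic ambiguity with $H^2(\Delta,\log D)$. Since cutting is canonical, the equivalence class of $(\Delta,D,\xi)$ together with the Chern classes $c_1(\widetilde M)\in H^2(X,\RR)\otimes\Tt$ determines $M$, and an equivariant Moser argument---valid because $\mu$ is proper---upgrades equality of these invariants to an equivariant symplectomorphism, giving Part 2. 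For Part 3, the Hamiltonian hypothesis forces $(\Delta,D,\xi)$ to have trivial affine monodromy (Remark after Definition~\ref{torlog}); since all strata of $\Delta$ are affine spaces and $D$ is of normal crossing type, the welding construction of Section~\ref{weld} realizes $X$ as a tropical welded space containing $\Delta$ as a compact log affine polytope. Convexity is checked stratum by stratum: on each affine component of $X\setminus D$ the set $\Delta$ is cut out by the affine linear inequalities of its interior and log faces, the local Delzant corners are convex, and an Atiyah--Guillemin--Sternberg-type connectedness argument promotes this to convexity of $\Delta\cap(X\setminus D)$, with compatibility across $D$ supplied by the reflection symmetry built into the welding. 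Combining Parts 2 and 3 with Corollary~\ref{sympcutcon} yields the final assertion.

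\textbf{Main obstacle.} The crux is the local analysis near the degeneracy locus in the first step: proving the logarithmic Darboux normal form for a Lagrangian $T^n$-action transverse to a normal crossing divisor, and checking that the orbit space is a manifold with corners carrying exactly the log affine polytope structure of Section~\ref{cuts} (including that the log affine structure on $\Delta$ is locally of welded type, i.e. has trivial local monodromy). This is the analog, for the algebroid $TM(-\log Z)$, of the local normal form and convexity theorem that are already the technical heart of the classical Delzant correspondence; once it is in hand, Parts 2 and 3 proceed along the classical lines with the cohomological bookkeeping supplied by Mazzeo--Melrose and the Gysin sequence.
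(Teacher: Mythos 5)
Your overall architecture matches the paper's: descend $i_\rho\omega$ to the quotient, read off the Delzant condition from local normal forms, reconstruct $M$ by symplectic cutting of a principal bundle classified by Theorem~\ref{nontrivprinc}, and treat Part 3 via the welding machinery. The main packaging difference in Parts 1--2 is that the paper does not work directly with a logarithmic Darboux chart near each orbit; instead it first forms the \emph{symplectic uncut} $(\widetilde M,\widetilde Z,\widetilde\omega)$ by iterated compressed blow-up along the codimension-$2$ circle fixed point sets (Appendix~\ref{symp uncut}), which converts the Lagrangian action into a principal one, exhibits $\Delta=M/T^n$ as a manifold with corners, and lets Theorem~\ref{nontrivprinc} be applied verbatim. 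The local normal form you flag as the ``main obstacle'' is exactly Lemma~\ref{lemma: normal form}, proved there by a Moser argument using the computation $H^2(U_x,\log Z)=H^2(K\cap U_x,\log(K\cap Z))$; so that gap is real but fillable along the lines you anticipate.

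The genuine gap is in your Part 3. You assert that ``since all strata of $\Delta$ are affine spaces and $D$ is of normal crossing type, the welding construction of Section~\ref{weld} realizes $X$ as a tropical welded space containing $\Delta$,'' but this is precisely what must be proved, and it is not automatic: a priori each component of $\Delta\setminus D$ only carries a flat affine structure (a local $\RR^n$-action), and nothing yet says its developing map into $\Tt^*$ is injective, nor that the local reflection data across the components of $D$ assemble consistently into a welded space. The paper's proof of Part 3 does real work here: it builds the ambient welded space $X$ from the residues $v_i=i_\rho\Res_{Z_i}(\omega)$ and the incidence combinatorics of the $Z_i$, then constructs a developing map $\iota:\Delta\to X$ by analytic continuation along paths using the local uniqueness statement of Theorem~\ref{umplogaf}, proves path-independence by integrating $\xi$ around loops and invoking trivial affine monodromy, and finally uses compactness to see $\iota$ is a covering map and trivial monodromy again to see it is $1$-sheeted. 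Your appeal to an Atiyah--Guillemin--Sternberg connectedness argument for convexity does not substitute for this: the issue is not convexity of a subset of a fixed affine space but the existence and injectivity of the map into an affine (welded) model in the first place. Without the developing-map argument, Part 3 --- and hence the final identification with the output of Corollary~\ref{sympcutcon} --- is not established.
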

\end{pp}

\begin{proof}
\mbox{}

\vspace{1ex}
\noindent {\it Part 1:}
The notion of symplectic uncut was introduced in~\cite{Meinrenken1} to give a simpler proof of the Delzant correspondence for toric symplectic manifolds with proper momentum maps. It was studied carefully in \cite{Karshon1} in order to characterize non-compact toric symplectic manifolds.  We modify this construction in Appendix~\ref{symp uncut} in order to apply it to log symplectic manifolds.

The symplectic uncut $(\widetilde{M}, \widetilde{Z}, \widetilde{\omega})$, as described in Definition~\ref{defn: sympl uncut}, is a modification of $(M,Z,\omega)$ along its submaximal orbit strata which renders the 
$T^n$-action into a principal action. It follows from Lemma~\ref{lemma: unfolded blow-up} that $\Delta$ is a compact manifold with corners and that $D=Z/T^n$ is a normal crossing divisor. 

Since the infinitesimal action $\rho$ is Lagrangian, it follows that the logarithmic 1-form  
$$
\iota_{\rho}\widetilde{\omega}\in \Omega^1(\widetilde M, \log \widetilde Z)\otimes\Tt^*
$$
is basic, defining a closed logarithmic 1-form $\xi \in \Omega^1(\Delta, \log D) \otimes \Tt^*$ which endows $\Delta$ with the structure of a log affine manifold.

To see that $\Delta$ is a log affine polytope, note that each component 
of $\del \Delta$, not contained in $D$, is the $T^n$-quotient of an exceptional boundary component of $\widetilde{M}$. 
By the normal form given in Lemma~\ref{lemma: normal form}, and by the computation in Example \ref{example: local}, we see that each exceptional boundary component is the zero locus of an affine linear function with linear part given by a primitive generator of the integral lattice in $\Tt$.
Finally, Proposition \ref{prop: normal form} implies that $\Delta$ satisfies the Delzant condition required by Definition~\ref{delzantcondition}.

\vspace{1ex}
\noindent{\it Part 2:}
The uncut construction is such that $(M,Z,\omega)$ may be obtained from $(\widetilde{M}, \widetilde{Z}, \widetilde{\omega})$ by symplectic cutting along the nonsingular faces of $\Delta$.
By Theorem~\ref{nontrivprinc}, fixing the equivalence class of $\Delta$ and the $n$-tuple of Chern classes of $\widetilde M$, the  equivariant isomorphism class of the uncut is determined up to translation of the class $[\widetilde{\omega}]\in H^2(\widetilde M,\log\widetilde{Z})$ by basic classes in $H^2(\Delta,\log D)$. Since equivariant isomorphisms descend under symplectic cutting, we obtain the result.

\vspace{1ex}
\noindent{\it Part 3:}
We first construct a tropical welded space $(X, D_X, \xi_X)$ which contains the polytope $(\Delta, D, \xi)$. As in Paragraph~\ref{pp:dualcubcx}, we associate a cubic complex $\sf{C}_{\Delta, D}$ to $(\Delta, D)$: its closed vertices correspond to the (affine) components of $\Delta \setminus D$, its edges correspond to the codimension 1 affine strata of $D$, and so on. By decorating the edges of $\sf{C}_{\Delta, D}$ with the residue vectors of $(\Delta, D, \xi)$ as in Paragraph~\ref{pp:admcubcx}, we obtain an admissible decorated cubic complex as in Definition~\ref{def:admcubcx}. By 
Proposition~\ref{prop: grandweld}, we construct the tropical welded space $(X, D_X, \xi_X)$ from the admissible decorated cubic complex.

We now construct an embedding $\iota$ of $\Delta$ into $X$ as a log affine submanifold; the embedding is unique up to an overall translation. 
Choose a basepoint $x_0$ in a component $\Delta_0$ of $\Delta \setminus D$, and choose $y_0$ in the interior of the corresponding component of $X \setminus D_X$.

We define $\iota(x_0)=y_0$, which fixes the translation ambiguity. For any other point $x \in \Delta \setminus D$, we choose a path $\gamma$ from $x_0$ to $x$ which, for convenience, is transverse to each component of $D$.   

We cover $\gamma$ by open sets in each of which we have 
coordinates such that $\xi$ is in normal form~\eqref{xinormform}. 
Applying Theorem~\ref{umplogaf}, we obtain a unique extension of 
$\iota$ to the union $U_\gamma$ of this cover of $\gamma$,
\begin{equation} \label{eq: local embed}
	\iota_\gamma: U_\gamma \to X,
\end{equation}
which is a local diffeomorphism onto its image and preserves the
log affine structure. We write $y = \iota_\gamma(x) \in X$.

We now show that the construction above is path-independent.  
Let $\gamma'$ be another such path from $x_0$ to $x$, yielding
a possibly different point $y' = \iota_{\gamma'}(x) \in X$. Then $\iota_{\gamma'}(\gamma') \cdot \iota_\gamma(\gamma^{-1})$ is a path from $y$ to $y'$.
Because $\Delta$ has trivial affine monodromy, the loop $\gamma' \cdot \gamma^{-1}$ has zero regularized length, implying 
\begin{equation} \label{eq: int}
	\int_{\iota_{\gamma'}(\gamma') \cdot \iota_\gamma(\gamma^{-1})} \xi_X = \int_{\gamma' \cdot \gamma^{-1}} \xi = 0.
\end{equation}
However, $y$ and $y'$ lie in the same affine stratum $A$ of $X$ (by the construction of $X$), so we can choose a path $\delta\subset A$ from $y$ to $y'$. Using the affine
structure on $A$, we have hence
$$
	y'-y = \int_{\delta} \xi_A = \int_{\iota_{\gamma'}(\gamma') \cdot \iota_\gamma(\gamma^{-1})} \xi_X = 0,
$$
where $\xi_A$ is the 1-form defining the affine structure on $A$,
obtained from $\xi$ by reduction and pull back as in Paragraph~\ref{pulbk}.
In this way, we obtain a local diffeomorphism of log affine manifolds onto its image
\begin{equation} \label{eq: embed}
	\iota: \Delta \to X.
\end{equation}

Because $\Delta$ is compact, \eqref{eq: embed} is a covering map, so it suffices to show that it is a 1-sheeted cover. Take $y_0 \in \iota(\Delta) \cap (X \setminus D_X)$ and $x_0, x'_0 \in \iota^{-1}(y_0)$. Because $x_0$ and $x'_0$ are in the same component of $\Delta \setminus D$, we may choose a path $\lambda$ from $x_0$ to $x'_0$ contained in that component. Since $\Delta$ has trivial monodromy, it follows that
$$
	x'_0 - x_0 = \int_\lambda \xi = 0,
$$
as required.  
\end{proof}

\appendix 
\section{Appendix}
\subsection{Local normal forms for log affine manifolds}\label{normaff}

\begin{pp}
  Let $(X,D,\xi)$ be a log affine manifold with $D$ of normal crossing
  type.  If $x\in X$ is such that precisely $k$ components $D_1,\ldots,
  D_k$ of $D$ meet $x$, then we may
  choose coordinates $(y_1,\ldots, y_n)$ in a neighborhood $U$ centered at $x$ such that $D$
  is the vanishing locus of the monomial $y_1\cdots y_k$.  The
  coordinates also provide a natural trivialization of the algebroid
  $TX(-\log D)$, i.e., the sections of $TX(-\log D)$ are generated by the vector fields
  $$
    y_1\del_{y_1}, \ldots, y_k\del_{y_k}, \del_{y_{k+1}},\ldots, \del_{y_{n}}.
  $$

\begin{prop}\label{coordsaff}
  There exist coordinates $(x_1,\ldots, x_n)$ such that $D$ is the vanishing locus of $x_1\cdots x_k$ 
  and 
 
  \begin{equation}
    \label{eq:33}
    \xi = \sum_{i=1}^k  x_i^{-1}dx_i\otimes v_i + \sum_{i=k+1}^n dx_{i}\otimes  v_{i},
  \end{equation}
  for $\{v_1,\ldots v_n\}$ a constant basis of $\RR^n$. 
\end{prop}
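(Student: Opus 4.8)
The plan is to straighten $\xi$ into the normal form \eqref{eq:33} by combining the nondegeneracy of $\xi$ with the logarithmic Poincar\'e lemma, starting from the coordinates $(y_1,\ldots,y_n)$ and the frame \eqref{eq:31} already in hand. The desired constant basis of $\RR^n$ will not be chosen freely: it will be manufactured from the residues of $\xi$. Since $\xi$ is closed, its residue $\Res_{D_i}(\xi)$ along each branch $D_i$ is a closed $\RR^n$-valued function on the connected set $D_i\cap U$, hence a constant vector $v_i\in\RR^n$ for $i=1,\ldots,k$. Moreover $\Res_{D_i}(\xi)$ is the restriction to $D_i$ of the contraction of $\xi$ with the logarithmic Euler field $y_i\del_{y_i}$, so its value at $x$ is $\xi_x(y_i\del_{y_i})$; as $\xi$ is a Lie algebroid isomorphism in the sense of Definition~\ref{logafin}, $\xi_x$ is injective, and the $y_i\del_{y_i}|_x$ are part of the frame \eqref{eq:31}, so $v_1,\ldots,v_k$ are linearly independent. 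I extend them arbitrarily to a basis $(v_1,\ldots,v_n)$ of $\RR^n$ and write $\xi=\sum_{i=1}^n\eta_i\otimes v_i$ with $\eta_i\in\Omega^1(U,\log D)$ closed.

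Next I read off the residue information: comparing $\Res_{D_i}(\xi)=v_i$ with this decomposition forces $\Res_{D_i}(\eta_j)=\delta_{ij}$ for all $i\le k$ and all $j$. Thus for $j>k$ the form $\eta_j$ has vanishing residue along every component of $D$, hence is an ordinary smooth closed $1$-form on $U$; shrinking $U$ to a star-shaped coordinate neighbourhood and applying the ordinary Poincar\'e lemma gives $\eta_j=dx_j$ for a smooth function $x_j$. For $j\le k$ the form $\eta_j-y_j^{-1}dy_j$ has vanishing residue along every component, hence is smooth and closed, so equals $dh_j$ for a smooth $h_j$; then $\eta_j=y_j^{-1}dy_j+dh_j=(y_je^{h_j})^{-1}d(y_je^{h_j})$, and I set $x_j=y_je^{h_j}$, which satisfies $\{x_j=0\}=\{y_j=0\}$ and $\eta_j=x_j^{-1}dx_j$ as logarithmic forms.

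It remains to check that $(x_1,\ldots,x_n)$ is a genuine coordinate system near $x$ and that $D=\{x_1\cdots x_k=0\}$. The latter is immediate, since each $x_j$ with $j\le k$ differs from $y_j$ by the nonvanishing factor $e^{h_j}$, so $x_1\cdots x_k$ and $y_1\cdots y_k$ have the same zero locus. For the former I examine the differentials at $x$: for $j\le k$ one has $dx_j|_x=e^{h_j(x)}\,dy_j|_x$ because $y_j(x)=0$, while the $\eta_i$ form a basis of $T_x^*X(\log D)$; passing to the quotient $T_x^*X(\log D)/\IP{y_1^{-1}dy_1,\ldots,y_k^{-1}dy_k}\cong T_x^*X/\IP{dy_1,\ldots,dy_k}$ shows that $\{dy_1,\ldots,dy_k,dx_{k+1},\ldots,dx_n\}$, hence also $\{dx_1,\ldots,dx_n\}$, is a basis of $T_x^*X$. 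By the inverse function theorem, after a further shrinking the $x_i$ are coordinates, and by construction $\xi=\sum_{i=1}^k x_i^{-1}dx_i\otimes v_i+\sum_{i=k+1}^n dx_i\otimes v_i$, which is \eqref{eq:33}.

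The main obstacle is the logarithmic input used twice above: that a closed logarithmic $1$-form on a star-shaped neighbourhood with all residues zero is the differential of a smooth function. This is the degree-one case of the logarithmic Poincar\'e lemma, and I would either cite it or give the short argument: vanishing of the residues makes the form smooth (a term $a\,y_i^{-1}dy_i$ with $a|_{y_i=0}=0$ equals $(a/y_i)\,dy_i$ with $a/y_i$ smooth), after which the ordinary Poincar\'e lemma applies. Everything else---the identification $\Res_{D_i}(\xi)(x)=\xi_x(y_i\del_{y_i})$ and the inverse-function-theorem verification---is routine.
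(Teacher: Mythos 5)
Your proposal is correct and follows essentially the same route as the paper: both identify the constant basis from the residues of the closed form $\xi$, subtract the singular model so that the remainder is a smooth closed (hence locally exact) form, and then perform the coordinate change $x_j = y_j e^{h_j}$ for $j\le k$ and $x_j$ a primitive for $j>k$. The only differences are cosmetic — you work component-by-component with the $\eta_j$ and verify the invertibility of the coordinate change more explicitly, while the paper treats $\xi$ as a single $\RR^n$-valued form and writes the correction as $dF$.
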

\begin{proof}
Begin with coordinates $(y_1,\ldots, y_n)$ in the neighborhood $U$ as above,  so that $D$ is given by $y_1\cdots y_k = 0$ and  
\[
\xi = \sum_{i=1}^k y_i^{-1} dy_i\otimes \alpha_i  + \sum_{i=k+1}^n dy_i \otimes\alpha_i,
\]
where $\alpha_i: U\to \RR^n$ are smooth vector-valued functions.   It follows from $d\xi=0$ that 
for each $i\leq k$, $\alpha_i$ is constant along the hypersurface $y_i=0$.  Denoting the basis $\{\alpha_1(0),\ldots, \alpha_n(0)\}$ by 
$\{v_1,\ldots, v_n\}$, we conclude
that 
\[
\xi - \left(\sum_{i=1}^k y_i^{-1} dy_i\otimes v_i  + \sum_{i=k+1}^n dy_i\otimes v_i \right) = dF,
\]
for $F:U\to \RR^n$ smooth and $F(0) = dF(0) = 0$. 

We now expand $F$ in terms of the basis $\{v_1,\ldots, v_n\}$,
\[
F = \sum_{i=1}^n f_i v_i,
\] 
and we define new coordinates on a possibly smaller neighborhood given by
\begin{equation}
  \label{eq:40}
  x_i = \begin{cases}

y_i e^{f_i} & \text{for} \quad i\leq k,\\
y_i + f_i   & \text{for} \quad i > k.
\end{cases}
\end{equation}
In these coordinates, we have the required expression~\eqref{eq:33}.
\end{proof}

\end{pp}

\begin{pp}
  In the coordinates provided by Proposition~\ref{coordsaff}, the 
  $\RR^n$-action on $X$ may be written as
  \begin{equation}
    \label{eq:34}
    u\cdot (x_1,\ldots x_n) = (e^{v_1^*(u)}x_1,\ldots e^{v_k^*(u)}x_k, x_{k+1} + v^*_{k+1}(u), \ldots, x_n + v^*_n(u)),
  \end{equation}
  where $\{v_1^*,\ldots v_n^*\}$ is the dual basis to $\{v_1,\ldots, v_n\}$.
\end{pp}

\begin{pp}

  We see from~\eqref{eq:33} that for each $i\leq k$, $v_i$
  is the residue of $\xi$ along $D_i$, and so is an invariant of the
  log affine manifold.  We also see that these residues are linearly
  independent, since they are contained in the basis $\{v_1,\ldots, v_n\}$.
  The remaining basis elements $\{v_i \mid i>k\}$, however, are not
  invariant and may be adjusted by a change of coordinates, as
  follows.
  
  If $\{v_1,\ldots, v_k, w_{k+1},\ldots, w_n\}$ is any other basis,
  define $\{r_{ij}, s_{ij}\}$ via
  \begin{equation}
  v_j = \sum_{i>k} r_{ij} w_i + \sum_{i\leq k} s_{ij} v_i,  \text{ for all } j>k.
  \end{equation}
  We then make the coordinate change 
  \begin{equation}
    \label{eq:36}
    \tilde x_i =    \begin{cases}
      x_i \exp({\sum_{j>k} s_{ij}  x_j}) & \text{ for }  i \leq k,\\
      \sum_{j>k} r_{ij} x_j & \text{ for } i>k. 
    \end{cases}
  \end{equation}
  As a consequence, we obtain 
  \begin{equation}
    \sum_{i=1}^k  x_i^{-1}dx_i\otimes v_i + \sum_{i=k+1}^n dx_{i}\otimes  v_{i}=
    \sum_{i=1}^k  \tilde x_i^{-1}d\tilde x_i\otimes v_i + \sum_{i=k+1}^n d\tilde x_{i}\otimes  w_{i}.
  \end{equation}

\end{pp}

\begin{prop}\label{normlog}
  Let $(X,D,\xi)$ be a log affine $n$-manifold and suppose that $x\in X$
  lies in
  exactly $k$ components $D_1,\ldots, D_k$ of the normal crossing
  divisor $D$. Then the residues $v_i\in \RR^n$ of $\xi$ along $D_i$ are
  linearly independent. In addition, for any extension $\{v_1,\ldots, v_k, v_{k+1},\ldots v_n\}$ 
  to a basis, there are coordinates near $x$ such that 
\begin{equation}\label{xinormform}
  \xi = \sum_{i=1}^k  x_i^{-1}dx_i\otimes v_i + \sum_{i=k+1}^n dx_{i}\otimes  v_{i}.
\end{equation}
\end{prop}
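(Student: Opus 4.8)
The plan is to obtain Proposition~\ref{normlog} by combining Proposition~\ref{coordsaff} with the basis-change computation recorded in~\eqref{eq:36}, so that very little new work is required.

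\emph{Step 1: apply Proposition~\ref{coordsaff} and identify the residues.} First I would use Proposition~\ref{coordsaff} to produce coordinates $(y_1,\ldots,y_n)$ centered at $x$ in which $D$ is cut out by $y_1\cdots y_k$ and
\[
\xi = \sum_{i=1}^k y_i^{-1}dy_i\otimes b_i + \sum_{i=k+1}^n dy_i\otimes b_i
\]
for a constant basis $(b_1,\ldots,b_n)$ of $\RR^n$. The proof of that proposition already shows that, for $i\le k$, the vector $b_i$ is the value along $D_i$ of the coefficient of $y_i^{-1}dy_i$, which is exactly the residue $\Res_{D_i}\xi$; since the residue of a logarithmic $1$-form along a smooth component of the divisor is intrinsically defined, $b_i=v_i$ for $i\le k$ regardless of the chart we started from. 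Because $(v_1,\ldots,v_k)$ is then part of the basis $(b_1,\ldots,b_n)$, the residues are linearly independent, which is the first assertion of the Proposition.

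\emph{Step 2: adjust the non-canonical part of the basis.} It remains to replace the a priori non-invariant vectors $b_{k+1},\ldots,b_n$ by the prescribed extension $v_{k+1},\ldots,v_n$. Here I would invoke the identity of $1$-forms verified in the paragraph containing~\eqref{eq:36}, applied with its ``source'' basis taken to be the target basis $(v_1,\ldots,v_k,v_{k+1},\ldots,v_n)$ and its ``output'' basis taken to be $(v_1,\ldots,v_k,b_{k+1},\ldots,b_n)$. Since both are bases extending $(v_1,\ldots,v_k)$, one may write, for $j>k$,
\[
v_j = \sum_{i>k} r_{ij}\, b_i + \sum_{i\le k} s_{ij}\, v_i,
\]
and the block-triangular shape of this change of basis forces the matrix $(r_{ij})_{i,j>k}$ to be invertible. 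The substitution~\eqref{eq:36} associated with these coefficients is then a diffeomorphism of a neighbourhood of the origin onto its image: for $i\le k$ it multiplies $x_i$ by a positive smooth function, so it preserves the locus $x_1\cdots x_k=0$, while on the remaining coordinates it is the invertible linear map $(r_{ij})$. Setting $(x_1,\ldots,x_n)$ to be the preimage of $(y_1,\ldots,y_n)$ under this diffeomorphism and reading~\eqref{eq:36} in reverse, we conclude that in the coordinates $(x_1,\ldots,x_n)$ the form $\xi$ is exactly~\eqref{xinormform} with the prescribed basis, as required.

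\emph{Expected difficulty.} I do not anticipate a genuine obstacle: the statement is essentially a repackaging of Proposition~\ref{coordsaff} together with the already-executed computation~\eqref{eq:36}. The only point demanding some care is the bookkeeping of which basis plays the ``source'' and which the ``output'' role in~\eqref{eq:36}, together with the verification that the substitution there is a bona fide diffeomorphism preserving $D$; once that is pinned down, the claimed normal form is immediate from that computation and from the intrinsic description of the residues.
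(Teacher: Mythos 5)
Your proposal is correct and follows the paper's own route: Proposition~\ref{normlog} is stated there precisely as a summary of the two preceding paragraphs, namely Proposition~\ref{coordsaff} together with the observation that the $v_i$ for $i\le k$ are the intrinsically defined residues (hence linearly independent), and the basis-adjustment coordinate change~\eqref{eq:36}. Your bookkeeping of which basis is the ``source'' and which the ``output'' in~\eqref{eq:36}, and the check that the substitution is a local diffeomorphism preserving $\{x_1\cdots x_k=0\}$, are both correct.
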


\begin{theorem}\label{umplogaf}
  Let $(X,D,\xi)$ and $p\in U\subset X$ be as above.  If we choose a
  basepoint $q\in U\setminus D$ and map it to any point $\phi(q)\in
  \RR^n$, then there is a unique extension of $\phi$ to a map   \begin{equation}
    \phi: U\to X_A
  \end{equation}
with values in the
log affine manifold $X_A$ constructed in~\eqref{eq: X_A},
where $A=\{v_1,\ldots, v_k\}$ is the set of residues of $\xi$ in
  $U$. This map is an isomorphism of log affine manifolds onto its
  image.
\end{theorem}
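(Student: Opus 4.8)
The plan is to reduce the statement to the local normal form of Proposition~\ref{normlog} and then to observe that the model target $\widetilde{(\RR^n)}_A$ carries a log affine structure which, in suitable global coordinates, is \emph{literally} that same normal form; the map $\phi$ is then the resulting coordinate comparison, adjusted by a residual translation. Concretely, after shrinking $U$ and fixing an extension of the residue set $A=\{v_1,\dots,v_k\}$ to a basis $(v_1,\dots,v_n)$ of $\RR^n$, Proposition~\ref{normlog} supplies coordinates $(x_1,\dots,x_n)$ on $U$, centred at $p$, with $D\cap U=\{x_1\cdots x_k=0\}$ and
\[
\xi=\sum_{i=1}^k x_i^{-1}dx_i\otimes v_i+\sum_{j=k+1}^n dx_j\otimes v_j.
\]

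Next I would make the target $\widetilde{(\RR^n)}_A=(\RR^n\times\RR^k)/\RR^k$ of~\eqref{eq:12} explicit. Writing $(v_1^*,\dots,v_n^*)$ for the dual basis and $(\lambda_1,\dots,\lambda_k)$ for the $\RR^k$-factor coordinates, the functions $t_i=\lambda_i e^{v_i^*(x)}$ for $i\le k$ and $x_j=v_j^*(x)$ for $j>k$ are invariant under the action~\eqref{eq:2} and descend to global coordinates identifying $\widetilde{(\RR^n)}_A$ with $\RR^n$. Under this identification the degeneracy divisor is $\{t_1\cdots t_k=0\}$, the embedded affine stratum $\RR^n$ is the region $\{t_i>0\ \forall i\}$ with affine coordinates $(\log t_1,\dots,\log t_k,x_{k+1},\dots,x_n)$, the residual $\RR^n$-action of~\eqref{eq:4} reads $t_i\mapsto e^{v_i^*(u)}t_i$, $x_j\mapsto x_j+v_j^*(u)$, and the tautological logarithmic form~\eqref{eq:8} (extended to $\widetilde X_A$ in the obvious way) becomes
\[
\sum_{i=1}^k t_i^{-1}dt_i\otimes v_i+\sum_{j=k+1}^n dx_j\otimes v_j,
\]
again the normal form of~\eqref{xinormform}. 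Checking this last identity — unwinding the quotient and matching the residues — is the one genuinely computational point, and it follows directly from the description of $\xi$ on $\ol X_A$ recorded before Definition~\ref{blocks}.

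With both sides now in identical normal form, I would define $\phi\colon U\to\widetilde{(\RR^n)}_A$ to be the coordinate identity composed with the unique residual $\RR^n$-translation carrying the image of $q$ to the prescribed point $\phi(q)\in\RR^n$ (such a translation exists and is unique because the residual action is simply transitive on the stratum $\{t_i>0\}\cong\RR^n$). Then $\phi$ is a diffeomorphism of $U$ onto an open neighbourhood of the codimension-$k$ stratum point it hits, it sends $D\cap U$ onto the model divisor, and $\phi^*\xi=\xi$ by construction, so $\phi$ is an isomorphism of log affine manifolds onto its image.

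Finally, uniqueness. Any morphism of log affine manifolds $\phi'$ on $U$ with $\phi'(q)=\phi(q)$ satisfies $(\phi')^*\xi=\xi$; comparing $v_i$-components in the normal coordinates forces $d\log|\phi'_i|=d\log|x_i|$ for $i\le k$, hence $\phi'_i=c_ix_i$ with $c_i$ locally constant and nonzero, and forces $\phi'_j=x_j+d_j$ with $d_j$ constant for $j>k$. Since $U\setminus D$ is disconnected — it has $2^k$ ``quadrant'' components — one cannot conclude on all of $U$ merely by developing from $q$; instead smoothness of $\phi'$ across the hyperplanes of $D$ forces each $c_i$ to be a single constant on all of $U$, and the requirement $\phi'(q)=\phi(q)\in\{t_i>0\}$ then fixes the signs of the $c_i$ and, together with the numerical value of $\phi(q)$, all the remaining $n$ parameters $c_1,\dots,c_k,d_{k+1},\dots,d_n$. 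Hence $\phi'=\phi$. (The map $\phi$ is moreover independent of the auxiliary basis extension: a different extension changes the coordinates on $U$ and on $\widetilde{(\RR^n)}_A$ by the \emph{same} transformation~\eqref{eq:36}, leaving the underlying map unchanged.) Thus the only real work is the bookkeeping of the previous paragraph — identifying the model target and checking its residues — together with this care over the disconnectedness of $U\setminus D$; the rest is soft.
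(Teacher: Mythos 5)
Your proposal is correct and follows essentially the same route as the paper's proof: put $\xi$ into the normal form of Proposition~\ref{normlog}, observe that $\widetilde{(\RR^n)}_A$ carries the same normal form in adapted coordinates, and read off $\phi$ as the coordinate comparison with constants fixed by the basepoint condition. The only difference is that you spell out two points the paper leaves implicit — the explicit coordinates $t_i=\lambda_i e^{v_i^*(x)}$ identifying the target's log form, and the smoothness-across-$D$ argument pinning down the constants despite the $2^k$ components of $U\setminus D$ — both of which are worthwhile additions but not a different method.
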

\begin{proof}
  Choose local coordinates $(x_1,\ldots, x_n)$ in $U$ so that $\xi$ is
  given by~\eqref{eq:33} and choose similar adapted coordinates
  $(y_1,\ldots, y_n)$ for $X_A$, so that its log 1-form $\xi_A$ has the form 
  \begin{equation}
    \label{eq:35}
    \xi_A = \sum_{i=1}^k  y_i^{-1}dy_i\otimes v_i + \sum_{i=k+1}^n dy_{i}\otimes  w_{i},
  \end{equation}
  for a basis $\{v_1,\ldots, v_k, w_{k+1},\ldots w_n\}$ of $\RR^n$.
  Using Proposition~\ref{normlog}, we change coordinates so that 
  \begin{equation}
    \label{eq:37}
    \xi_A =  \sum_{i=1}^k  \tilde y_i^{-1}d\tilde y_i\otimes v_i + \sum_{i=k+1}^n d\tilde y_{i}\otimes  v_{i}.
  \end{equation}
Therefore, the map $\phi$, which must satisfy $\phi^* \xi_A = \xi$, is given by 
  \begin{equation}
    \label{eq:38}
    \tilde y_i = \begin{cases}
      e^{c_i} x_i & \text{ for } i\leq k,\\
      x_i + c_i & \text{ for }  i>k,
    \end{cases}
  \end{equation}
  where the constants $\{c_i\in \RR\}$ are fixed by $\phi(p) = q$, as required.
\end{proof}

\subsection{Log symplectic reduction}

\begin{pp}
  Let $(M,Z,\omega)$ be a log symplectic manifold with an $S^1$-action
  generated by the vector field $\del_\theta$, and suppose it is
  Hamiltonian, in the sense that there is a function $\mu:M\to \RR$
  such that
  \begin{equation}\label{eq:44}
  i_{\del_\theta} \omega = - d\mu.
  \end{equation}
  We impose the following transversality condition: along $\widetilde M =
  \mu^{-1}(0)$, the composition $\widetilde{d\mu}$ of $d\mu$ with the anchor map
  $TM(-\log Z)\to TM$ 
  \begin{equation}
    \label{eq:43}
    \xymatrix{TM(-\log Z)|_{\widetilde M}\ar[r]^-{\widetilde{d\mu}}& \mu^*T_0\RR}
  \end{equation}
  must be surjective. In particular, this implies that $0$ is a
  regular value of $\mu$ and so $\widetilde M\subset M$ is a smooth
  hypersurface.  If $Z$ is a normal crossing divisor, \eqref{eq:43} is
  surjective if and only if $0$ is a regular value, not only for
  $\mu$, but also for the restriction of $\mu$ to any stratum of $Z$.
  As a result, the intersection $\widetilde Z = Z\cap \widetilde M$ is
  a normal crossing divisor in $\widetilde M$, and the kernel of the
  morphism~\eqref{eq:43} is the Lie algebroid $T\widetilde M(-\log
  \widetilde Z)$.

  If we assume further that $S^1$ acts freely and properly on $\widetilde
  M$, we obtain a normal crossing divisor $Z_0 = \widetilde Z/S^1$ in the
  quotient manifold $M_0=\widetilde M/S^1$. The quotient map $\pi:
  \widetilde M\to M_0$ induces a Lie algebroid morphism and an exact
  sequence over $\widetilde M$
  \begin{equation}
    \label{eq:444}
    \xymatrix{0\ar[r] & \IP{\del_\theta}\ar[r] & T\widetilde M(-\log \widetilde Z)\ar[r]^{\pi_*} & TM_0(-\log Z_0)\ar[r] & 0.}
  \end{equation}
  As in the usual Marsden-Weinstein symplectic reduction, the corank 1
  subalgebroid $T\widetilde M(-\log \widetilde Z)\subset TM(-\log Z)$
  is coisotropic with respect to the log symplectic form, whose
  restriction has rank 1 kernel generated by $\del_\theta$.
  Therefore, the pull back of $\omega$ to $\widetilde M$ is basic
  relative to $\pi$, and may be expressed as the pull back of a unique
  log symplectic form $\omega_0\in \Omega^2(M_0,\log Z_0)$, defining
  the logarithmic symplectic reduction.  We state a slight extension
  of this result for arbitrary free divisors.

\begin{prop}\label{logred}
  Let $(M,Z,\omega)$ be a log symplectic manifold, let $\mu$ be a
  Hamiltonian generating a circle action via~\eqref{eq:44}, and assume
  that~\eqref{eq:43} is surjective, which implies that $\mu^{-1}(0)$
  is smooth, with free divisor $\widetilde Z = \mu^{-1}(0) \cap Z$.
  Assume that the $S^1$-action on $\mu^{-1}(0)$ is free and proper, with
  quotient $M_0$ containing the free divisor $Z_0 = \widetilde Z/S^1$.
  Then the pull back of $\omega$ to $\mu^{-1}(0)$ is basic in the sense
  of~\eqref{eq:444}, and defines a logarithmic symplectic structure
  $(M_0,Z_0,\omega_0)$, called the \emph{log symplectic quotient}.
\end{prop}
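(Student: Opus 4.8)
The plan is to run the Marsden--Weinstein coisotropic reduction, but with the tangent bundle replaced throughout by the Lie algebroid $TM(-\log Z)$, on which $\omega$ is a fibrewise symplectic pairing.

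\emph{Step 1: the reduced domain.} First I would record that surjectivity of $\widetilde{d\mu}$ in~\eqref{eq:43} forces the ordinary differential $d\mu\colon T_pM\to T_0\RR$ to be surjective at each $p\in\mu^{-1}(0)$ (a linear map that is surjective on the image of the anchor is surjective), so $0$ is a regular value and $\widetilde M=\mu^{-1}(0)$ is a smooth hypersurface. Because $-d\mu\in\Omega^1(M)\subset\Omega^1(M,\log Z)$, the Hamiltonian vector field $\partial_\theta=\omega^{-1}(-d\mu)$ is automatically a section of $TM(-\log Z)$, and it is tangent to $\widetilde M$ since $\mathcal{L}_{\partial_\theta}\mu=-i_{\partial_\theta}i_{\partial_\theta}\omega=0$. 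A local analysis of the free divisor along $\widetilde M$ --- identical to the normal crossing computation carried out just before the statement --- shows that $\widetilde Z=\widetilde M\cap Z$ is again a free divisor and that the kernel of~\eqref{eq:43} over $\widetilde M$ is exactly the subalgebroid $T\widetilde M(-\log\widetilde Z)\subset TM(-\log Z)|_{\widetilde M}$. Since the $S^1$-action is free and proper on $\widetilde M$ and preserves $\widetilde Z$, the quotient $\pi\colon\widetilde M\to M_0$ is a principal $S^1$-bundle, $Z_0=\widetilde Z/S^1$ is a free divisor, and $\pi$ induces the exact sequence~\eqref{eq:444}.

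\emph{Step 2: coisotropic reduction.} Pairing log forms with log vector fields via the anchor, the identity $i_{\partial_\theta}\omega=-d\mu$ reads $\omega(\partial_\theta,v)=-\widetilde{d\mu}(v)$, so over $\widetilde M$ we have $T\widetilde M(-\log\widetilde Z)=\ker\widetilde{d\mu}=\langle\partial_\theta\rangle^{\perp_\omega}$. Taking $\omega$-orthogonals and using nondegeneracy of $\omega$ on the algebroid gives $\bigl(T\widetilde M(-\log\widetilde Z)\bigr)^{\perp_\omega}=\langle\partial_\theta\rangle$; thus $T\widetilde M(-\log\widetilde Z)$ is coisotropic and the restriction of $\omega$ to it has one-dimensional radical generated by $\partial_\theta$. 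Pulling $\omega$ back along the algebroid map $T\widetilde M(-\log\widetilde Z)\to TM(-\log Z)|_{\widetilde M}$ yields a \emph{closed} log $2$-form $\widetilde\omega\in\Omega^2(\widetilde M,\log\widetilde Z)$ (closed because $\omega$ is) with $i_{\partial_\theta}\widetilde\omega=0$, hence also $\mathcal{L}_{\partial_\theta}\widetilde\omega=d\,i_{\partial_\theta}\widetilde\omega=0$ by Cartan's formula. So $\widetilde\omega$ is basic relative to $\pi$ in the sense of~\eqref{eq:444}, and descends to a unique $\omega_0\in\Omega^2(M_0,\log Z_0)$ with $\pi^*\omega_0=\widetilde\omega$.

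\emph{Step 3: $\omega_0$ is log symplectic.} Closedness follows from $\pi^*d\omega_0=d\widetilde\omega=0$ together with injectivity of $\pi^*$ on log forms (as $\pi_*$ is a surjective algebroid morphism). Nondegeneracy is the fibrewise statement that a symplectic pairing restricts on the coisotropic subbundle $T\widetilde M(-\log\widetilde Z)$ to a form whose quotient by its radical $\langle\partial_\theta\rangle\cong\ker\pi_*$ is again symplectic --- standard symplectic linear algebra applied fibre by fibre, using the isomorphism $TM_0(-\log Z_0)\cong T\widetilde M(-\log\widetilde Z)/\langle\partial_\theta\rangle$ from~\eqref{eq:444}. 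The step I expect to require the most care is the claim in Step 1 for \emph{arbitrary} free divisors: that transversality (surjectivity of $\widetilde{d\mu}$) guarantees $\widetilde Z$ really is a free divisor with $T\widetilde M(-\log\widetilde Z)=\ker\widetilde{d\mu}$, and likewise that $Z_0$ is free. For normal crossing divisors this is exactly the coordinate computation already given before the statement; in general one checks locally, e.g.\ via Saito's criterion, that a transverse slice of a free divisor is a free divisor and that the quotient of a free divisor by a free proper action preserving it is free.
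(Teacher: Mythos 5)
Your proposal is correct and follows essentially the same route as the paper: transversality of $\widetilde{d\mu}$ gives the smooth level set with its free divisor and identifies $\ker\widetilde{d\mu}$ with $T\widetilde M(-\log\widetilde Z)$, which is then the $\omega$-orthogonal of $\langle\partial_\theta\rangle$, so the standard Marsden--Weinstein coisotropic reduction applies fibrewise to the algebroid and the pulled-back form descends to a closed nondegenerate $\omega_0$ on $M_0$. The one point you flag as delicate --- that for an \emph{arbitrary} free divisor the transverse slice $\widetilde Z$ and the quotient $Z_0$ are again free divisors --- is exactly the point the paper also leaves implicit (it verifies only the normal crossing case and calls the general statement a ``slight extension''), so your honest identification of that gap matches the state of the paper's own argument.
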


\end{pp}

\subsection{Symplectic uncut} \label{symp uncut}

Let $(M,Z,\omega)$ be a compact orientable log symplectic manifold with corners and with $Z$ of normal crossing type, equipped with an effective $T^n$-action. We construct a log symplectic principal $T^n$-bundle 
$(\widetilde{M}, \widetilde{Z}, \widetilde{\omega})$ over the quotient $M / T^n$, called the symplectic uncut (Definition \ref{defn: sympl uncut}), by applying the compressed  blow-up operation to $M$ along its submaximal orbit type strata, following an idea of Meinrenken~\cite{Meinrenken1}.  This operation is inverse to the symplectic cut introduced in Section~\ref{scut}, which may be 
applied to $\widetilde{M}$ along $\widetilde{Z}$ to recover $M$.

\subsubsection{Compressed blow-up}

To define the compressed blow-up operation (Definition \ref{defn: unf blow-up}), we first recall the blow-up operation and the boundary compression construction.

\begin{pp}

For $M$ a manifold and $K$ a submanifold of codimension $2$, we denote the \emph{real oriented blow-up} of $M$ along $K$ by $\blowup{M}{K}$. We denote the exceptional divisor by $E$, which is a boundary component of $\blowup{M}{K}$. The blow-down map
\begin{equation}
	p: \blowup{M}{K} \to M
\end{equation}
restricts to a diffeomorphism on $\blowup{M}{K} \setminus E$ and expresses $E$ as an $S^1$-fiber bundle over $K$.

\end{pp}

\begin{pp}
We now define a \emph{compression} operation closely related to the unfolding defined in \cite{MR1748286}. Let $M$ be a smooth manifold with corners and let $Z$ be a smooth component of the boundary. Let $U \cong Z \times [0, \epsilon)$ be a tubular neighborhood of $Z$, and let $V = M \setminus Z$. Then $M$ is the fibered coproduct, or gluing, of $U$ and $V$ along $U \cap V$. Consider the open embedding
\begin{equation}
	\iota: U \cap V \cong Z \times (0, \epsilon) \to \widetilde{U} = Z \times \left[0, \tfrac{1}{2}\epsilon^2\right), \qquad (p, r) \mapsto \left( p, \tfrac{1}{2}r^2 \right).
\end{equation}
The fibered coproduct of $\widetilde{U}$ and $V$ along $U \cap V$ is a smooth manifold with corners $\widetilde{M}$. The identity map on $V$ extends to a smooth homeomorphism $q: M \to \widetilde{M}$ such that the restriction to $Z$ is a diffeomorphism onto its image $\widetilde{Z} = q(Z)$. The triple $(\widetilde{M}, \widetilde{Z}, q)$ is called the \emph{compression} of $(M, Z)$.

\end{pp}

We now compress the blow-up along its exceptional divisor to obtain the following. 

\begin{defn} \label{defn: unf blow-up}

Let $M$ be a manifold with corners and let $K \subset M$ be a submanifold of codimension $2$. Then the \textbf{compressed blow-up} of $M$ along $K$ is the compression $(\widetilde{M}_K, \widetilde{E}, q)$ of $(\blowup{M}{K}, E)$.

If $S \subset M$ is a submanifold that intersects $K$ cleanly, the \textbf{compressed proper transform} of $S$ is the submanifold
$$
	\widetilde{S} = \overline{(q\circ p^{-1})(S \setminus K)} \subset \widetilde{M}_K.
$$

\end{defn}

\begin{example} \label{example: local blow-up} Let $M=\RR^2$ and $K=\{0\}\subset M$. The real oriented blow-up is then $\blowup{M}{K}=[0,\infty)\times S^1$; using standard coordinates $(r,\theta)$ for $\blowup{M}{K}$ and the complex coordinate on $M$, the blow-down map is $p(r,\theta) = r e^{i\theta}$.   The compressed blow-up is also given by $\widetilde{M}_K = [0,\infty)\times S^1$, and receives a smooth homeomorphism from the blow-up $q(r,\theta) = (\tfrac{1}{2}r^2,\theta)$. That is, $q: \blowup{M}{K} \to \widetilde{M}_K$ is smooth with a continuous inverse.
\end{example}

\begin{pp} \label{codim 2 strata}
We apply now the compressed blow-up operation to the submaximal strata 
of an effective $T^n$-action.
Let $M$ be a $2n$-dimensional compact orientable manifold with corners equipped with an effective $T^n$-action.  By \cite[Corollary B.48]{MR1929136}, the fixed point set of a circle subgroup $H \subset T^n$ is a disjoint union of closed submanifolds of codimension at least $2$.

Let $H_1, H_2, \ldots, H_m \subset T^n$ be the circle subgroups such that the fixed point set $M^{H_i}$ contains at least one codimension $2$ component (there are finitely many $H_i$, because $M$ is compact, and therefore has finitely many orbit type strata). Let $K_i$ be the union of the codimension $2$ components of $M^{H_i}$.

Let $\widetilde{M}_{K_1}$ be the compressed blow-up of $M$ along $K_1$. The compressed proper transform
$$
\widetilde{K_2} \subset \widetilde{M}_{K_1}
$$
is a codimension $2$ submanifold of $\widetilde{M}_{K_1}$, so we may apply the compressed blow-up operation to $\widetilde{M}_{K_1}$ along $\widetilde{K_2}$. Continuing in this way, we obtain the following.

\begin{defn} \label{it unf blow-up}
Let $M$ be a $2n$-dimensional compact orientable manifold with corners with an effective $T^n$-action.  The \textbf{iterated compressed blow-up} $\widetilde{M}$ is the manifold with corners obtained from $M$ by successively applying the compressed blow-up operation to the fixed point sets $\{K_1, K_2, \ldots, K_m\}$.
\end{defn}

The iterated compressed blow-up $\widetilde{M}$ receives the compression map
$$
	q: \blowup{M}{K} \to \widetilde{M},
$$
where $\blowup{M}{K}$ is the iterated real oriented blow-up of $M$ along $\{K_1, K_2, \ldots, K_m\}$.

\end{pp}

\begin{lemma} \label{lemma: unfolded blow-up}
	Let $M$ be a $2n$-dimensional compact orientable manifold with corners endowed with an effective $T^n$-action. The iterated compressed blow-up $\widetilde{M}$ in Definition \ref{it unf blow-up} is a principal 
$T^n$-bundle over the quotient $\Delta = M / T^n$, and therefore $\Delta$ is a compact orientable manifold with corners. Furthermore, the following diagram commutes
\begin{equation} \label{eq: 56}
	\begin{aligned}
	\xymatrix{
		\blowup{M}{K} \ar[d]_-{q} \ar[r]^-{p} & M \ar[d] \\
		\widetilde{M} \ar[r] & \Delta
	}
	\end{aligned}
\end{equation}
and the blow-down and folding maps $p, q$  are $T^n$-equivariant. 

In addition, if $Z$ is a $T^n$-invariant normal crossing divisor in $M$, then the compressed proper transform $\widetilde{Z}$ is a normal crossing divisor in $\widetilde{M}$, and $D = Z / T^n$ is a normal crossing divisor in $\Delta$.
\end{lemma}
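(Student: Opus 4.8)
The plan is to reduce the statement to a local computation around an arbitrary point of $M$, using the equivariant slice theorem, and then to glue the local pictures together. The key input is a normal form for the effective $T^n$-action. Given $p\in M$ with stabilizer $T_p$, the slice theorem (applicable since $T^n$ is compact and connected, hence preserves the corner stratification of $M$) provides a $T^n$-invariant neighbourhood $T^n\times_{T_p}W$, with $W$ the slice representation. Effectiveness forces $T_p$ to act effectively on $W$: as $T^n$ is abelian, $t\in T^n$ acts trivially on $T^n\times_{T_p}W$ precisely when $t\in T_p$ and $t$ acts trivially on $W$. A dimension count ($\dim(T^n\times_{T_p}W)=2n$) gives $\dim W=n+\dim T_p$, so writing $k=\dim T_p$ the $T^k$-representation $W$ has trivial isotypic summand of dimension $n-k$; effectiveness then means the nontrivial weights form a $\ZZ$-basis of the character lattice of $T^k$, so after an automorphism of $T^n$ we may assume $W\cong\CC^k\oplus\RR^{n-k}$ with $T^k$ acting by the standard representation on $\CC^k$ and trivially on $\RR^{n-k}$ (with $\RR^{n-k}$ replaced by $\RR_+^{j}\times\RR^{n-k-j}$ near a corner, since $\partial M$ is invariant). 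In particular all stabilizers are subtori, and the circle subgroups $H$ for which $M^H$ has a codimension $2$ component through $p$ are exactly the $k$ coordinate circles $S^1_1,\dots,S^1_k\subset T^k$, with $K_i$ given locally by $T^n\times_{T^k}\{z_i=0\}$.

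Next I would run the blow-up computation in this model. The $K_1,\dots,K_k$ meet near $p$ as distinct coordinate subspaces, hence pairwise cleanly, so the iterated real oriented blow-up is order-independent and well-defined. Iterating replaces $\CC^k$ by $(\RR_+\times S^1)^k=\RR_+^k\times T^k$, with $T^k$ now acting by translation on the $T^k$ factor; thus near $p$ the iterated blow-up is $T^n\times_{T^k}(\RR_+^k\times T^k\times\RR^{n-k})\cong T^n\times(\RR_+^k\times\RR^{n-k})$, a trivial principal $T^n$-bundle over a corner. The compression step is harmless for the present statement: it is a $T^n$-equivariant homeomorphism, smooth, a diffeomorphism away from the exceptional divisors and a diffeomorphism onto its image along them, so it preserves being a principal $T^n$-bundle, being a manifold with corners, and the orbit space.

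Globally, the local model shows the $T^n$-action on $\widetilde M$ is free everywhere; since $T^n$ is compact it is proper, so $\widetilde M\to\widetilde M/T^n$ is a principal $T^n$-bundle with quotient a manifold with corners. Compactness is preserved by blow-up along the closed submanifolds $K_i$ and by compression, and orientability is inherited (real oriented blow-up along an oriented codimension $2$ submanifold of an oriented manifold is oriented, the connected group $T^n$ preserves orientation, and a free orientation-preserving action has oriented quotient); hence $\widetilde M$ and $\Delta:=\widetilde M/T^n$ are compact and orientable. To identify this quotient with $M/T^n$, note that the equivariant maps $p$ and $q$ each induce a continuous bijection on orbit spaces — away from the $K_i$ they are equivariant diffeomorphisms, while over any orbit $\mathcal O$ inside the blown-up loci the local model shows $p^{-1}(\mathcal O)$ becomes a single free $T^n$-orbit — which, being a continuous bijection of compact Hausdorff spaces, is a homeomorphism; this simultaneously yields the commuting square~\eqref{eq: 56} and the equivariance of $p,q$. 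For the divisor: near any point, a $T^n$-invariant smooth hypersurface component of $Z$ through $p$ has conormal line at $p$ fixed by $T^k$, hence supported on the $\RR^{n-k}$ directions (the standard representation on $\CC^k$ has no invariant covector), so it is a coordinate hyperplane of ``$\RR^{n-k}$ type'' transverse to every $K_i$; thus $Z$, the $K_i$, and the corner of $M$ jointly form a normal-crossing configuration of coordinate subspaces, and blowing up a codimension $2$ coordinate subspace and compressing sends such a configuration to another one with the exceptional divisor a new coordinate hyperplane. Hence $\widetilde Z$, together with the exceptional divisors constituting the new boundary of $\widetilde M$, is normal crossing in $\widetilde M$; it is $T^n$-invariant, so under the local product structure it descends to a normal crossing divisor $D$ in $\Delta$, with $D=Z/T^n$ since $p,q$ are equivariant diffeomorphisms away from the codimension $2$ loci $K_i$. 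Finally, $T^n$ acts effectively on each component $Z_i$ of $Z$: its principal stratum is a free $T^n$-orbit (it is nonempty, since $Z_i\not\subseteq M^H$ for any nontrivial $H$ as $\dim M^H\le 2n-2<\dim Z_i$), and any $t$ fixing $Z_i$ pointwise fixes this free orbit, hence is trivial.

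I expect the main obstacle to be the local normal form of the first paragraph: deducing from effectiveness alone — with no symplectic hypothesis — that every slice representation is the standard toric model with weights a $\ZZ$-basis, and doing so compatibly with the corners so that $\partial M$, $Z$, and the circle-fixed loci all sit inside one coordinate-subspace configuration. Once this is in hand, the blow-up computation and the global assembly are essentially bookkeeping with clean-intersection and orbit-space arguments.
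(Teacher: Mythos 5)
Your proposal follows the same route as the paper's proof: establish a local model for the action near each intersection $K_{i_1}\cap\cdots\cap K_{i_k}$, compute the iterated blow-up and compression in that model to see that the action becomes free (replacing $\CC^k$ by $\RR_+^k\times T^k$), and deduce the principal bundle structure together with the normal crossing property of $\widetilde Z$ from transversality of $Z$ to the $K_i$. The one genuinely different ingredient is your treatment of the divisor: you argue that the conormal line of an invariant component of $Z$ through $x$ is a stabilizer-invariant line in the slice, hence lies in the trivial isotypic summand and so is transverse to every $K_i$, whereas the paper proves transversality of $T_xL$ to $T_xK_i$ by an induction showing that $T^n$ acts effectively on every intersection of components of $Z$. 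Both arguments work once the local model is available, and yours is arguably more direct.

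The gap is exactly where you suspect it, and it is real. Effectiveness plus orientability do not yield the normal form $W\cong\CC^k\oplus\RR^{n-k}$ with weights a $\ZZ$-basis: your dimension count gives $\dim W=n+k$, but nothing forces the nontrivial isotypic part of $W$ to have complex dimension exactly $k$ (one can have $m>k$ nontrivial weights generating the character lattice, producing circle-fixed loci of codimension greater than $2$ that no $K_i$ detects), nor does it force stabilizers to be connected. For instance, $T^2\times_{\ZZ_m}S^2$, with $\ZZ_m\subset T^2$ acting on $S^2$ by rotation, is a compact orientable $4$-manifold with an effective $T^2$ action, finite cyclic isotropy along two tori, and no circle-fixed points at all; here every $K_i$ is empty, $\widetilde M=M$, and the action is not free, so the conclusion fails for merely effective actions. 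To be fair, the paper's own proof simply asserts the local model $(U_x\cap K_{i_1}\cap\cdots\cap K_{i_k})\times\CC^k$ with the standard $U(1)^k$ action without deriving it from the stated hypotheses; in the intended application the model is supplied by the log symplectic normal form of Proposition~\ref{prop: normal form}, which uses the Lagrangian torus action and a Moser argument. So your proposal reproduces the paper's argument faithfully, including its unjustified step; a complete proof needs either a local standardness hypothesis added to the statement or the symplectic normal form imported at this point.
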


\begin{proof}
	Because $M$ is orientable, $T^n$ acts freely on the principal orbit stratum, and every point with non-trivial stabilizer is contained in some $K_i$.

	For any point $x \in K_{i_1} \cap K_{i_2} \cap \ldots \cap K_{i_k}$, there exists a $T^n$-invariant neighborhood $U_x$ that is equivariantly isomorphic to a neighborhood of
$$
	(x, 0)\in (U_x \cap K_{i_1} \cap K_{i_2} \cap \ldots \cap K_{i_k}) \times \CC^k,
$$
where $\CC^k$ is endowed with the standard $T^k$-action. It follows that
$q ( p^{-1}(x))$ is equivariantly isomorphic to a $k$-torus $T^k$, and $q ( p^{-1}(U_x))$ is equivariantly isomorphic to a neighborhood of
$$
	q ( p^{-1}(x)) \cong \{(x, 0)\} \times T^k \in (U_x \cap K_{i_1} \cap K_{i_2} \cap \ldots \cap K_{i_k}) \times  \RR_+^k \times T^k,
$$
where $\RR_+ = [0, \infty)$. Therefore, the induced $T^n$-action on 
$\widetilde{M}$ is free and proper, and diagram~\eqref{eq: 56} commutes.

Since $M$ is orientable, it follows that the iterated compressed blow-up $\widetilde{M}$ is orientable, and therefore $\Delta$ is orientable.

Let $Z_1, Z_2, \ldots, Z_l$ be components of $Z$. The intersection
$$
	L = \bigcap_{i =1, \ldots, l} Z_i
$$
is $T^n$-invariant. By induction on $l$, the $T^n$-action on $L$ is effective.

The isotropy action of $H_i$ on $T_xM$ descends to the normal space $N_xK_i$. Up to a linear transformation, $H_i$ acts on $N_xK_i \cong \CC$ by the standard $S^1$-action. Because the $T^n$-action on $L$ is effective, it follows that the $T^n$-invariant subspace $T_xL \subset T_xM$ is transverse to $T_xK_i$. Therefore $\widetilde{Z}$ is a normal crossing divisor in $\widetilde{M}$ and $D$ is a normal crossing divisor in $\Delta$. 
\end{proof}

\subsubsection{Local normal form}

Given a log symplectic manifold $(M, Z, \omega)$ equipped with an effective $T^n$-action, we show that intersections among the codimension 2 circle fixed point sets $\{K_1, K_2, \ldots, K_m\}$ are log symplectic submanifolds. We also show the existence of local normal forms for neighborhoods of points on such submanifolds. We begin with a convenient definition of log symplectic submanifold.

\begin{defn}
Let $(M,Z,\omega)$ be a log symplectic manifold. An embedded submanifold $i:N \hookrightarrow M$ is called a \textbf{log symplectic submanifold} if
$Ti:TN\to TM$ is transverse to the anchor map
$TM(-\log Z) \to TM$ and $\omega$ is non-degenerate 
on the resulting intersection.
\end{defn}

If $Z$ is of normal crossing type, then the 
above transversality condition
is equivalent to the condition that $N$ be transverse
to all possible intersections among components of $Z$.  This means that $Z_N = N\cap Z$ is a normal crossing divisor in $N$, and then $N$ is log symplectic when $\omega$ is non-degenerate when pulled back to the subbundle 
\[
TN(-\log Z_N)\hookrightarrow TM(-\log Z).
\]

\begin{lemma} \label{lemma: normal form}

Let $(M,Z,\omega)$ be a $2n$-dimensional compact orientable log symplectic manifold with $Z$ of normal crossing type, equipped with an effective 
$T^n$-action. Let $K$ be a codimension $2$ component of the fixed point set $M^H$ of a circle subgroup $H \subset T^n$. 

Then $K$ is a log symplectic submanifold. For every point $x \in K$, there exists a $T^n$-invariant neighborhood $U_x$ such that $U_x$ is isomorphic to a neighborhood of
\begin{equation}
	(x, 0) \in M^H \times \CC,
\end{equation}
where $\CC$ is equipped with the symplectic structure
$$
	\frac{i}{2}dz\wedge d\overline{z}
$$
and $H$ acts on $\CC$ by the standard $S^1$-action.
\end{lemma}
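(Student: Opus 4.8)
The plan is to deduce the statement from an equivariant, logarithmic version of the Moser trick relative to the submanifold $K$. First I would record the local structure near $K$. As in the proof of Lemma~\ref{lemma: unfolded blow-up}, effectiveness of the $T^n$-action forces the generic stabilizer along $K$ to be exactly $H$, and $H$ acts on the rank $2$ normal bundle $N_K\to K$ with weight $\pm 1$; fixing the sign convention, $N_K$ becomes a Hermitian line bundle on which $H$ acts by standard rotation. By the same argument (a rotation-invariant subspace of $N_xK\cong\CC$ is either $0$ or everything), every component of $Z$ meeting $K$, and every intersection of such components, is transverse to $K$, so $Z_K = Z\cap K$ is a normal crossing divisor in $K$. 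Consequently the logarithmic tangent bundle splits $H$-equivariantly along $K$ as $TM(-\log Z)|_K = TK(-\log Z_K)\oplus N$, with $N$ the nontrivial $H$-weight summand, which the anchor identifies with $N_K$. Since $\omega$ is $H$-invariant and the two summands carry distinct $H$-weights, they are $\omega$-orthogonal, so $\omega$ is nondegenerate on each; this already shows that $K$ is a log symplectic submanifold. Moreover $\omega|_N$ is a nonzero $H$-invariant $2$-form on the fibre $\CC$, hence a real multiple of $\tfrac{i}{2}\,dz\wedge d\bar z$, and after rescaling the fibre coordinate (orientability of $M$ pinning down the sign) we may take $\omega|_N = \tfrac{i}{2}\,dz\wedge d\bar z$.

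Next I would set up the comparison with the model. By the equivariant tubular neighbourhood theorem, $U_x$ is $T^n$-equivariantly diffeomorphic to a neighbourhood of the zero section of $N_K\to K$, that is, to a neighbourhood of $(x,0)$ in $M^H\times\CC$ (near $x$ the fixed locus $M^H$ coincides with $K$), with the product action of a chosen complement $T^{n-1}\subset T^n$ on $M^H$ and of $H$ by rotation on $\CC$. Under this identification $Z$ corresponds to $Z_K\times\CC$, and the model form $\omega_0$ --- the pullback to $M^H\times\CC$ of $\omega|_{M^H}$ plus $\tfrac{i}{2}\,dz\wedge d\bar z$ --- is a $T^n$-invariant logarithmic symplectic form for this divisor which, by the $\omega$-orthogonal splitting above, agrees with $\omega$ along the zero section $K$. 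I would then run the Moser method along the path $\omega_t = \omega_0 + t(\omega-\omega_0)$, $t\in[0,1]$, which consists of logarithmic symplectic forms in a possibly smaller neighbourhood of $K$ since nondegeneracy is an open condition and holds along $K$; the difference $\omega-\omega_0$ is a closed logarithmic $2$-form vanishing on $K$.

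To produce a primitive, note that the radial contraction $r_s(k,z)=(k,sz)$ retracts $U_x$ onto $K$ moving only in the $\CC$-fibre directions, which are transverse to every component of $Z_K\times\CC$; the associated homotopy operator therefore carries closed logarithmic forms to logarithmic forms, and since $r_s$ is stationary over $K$ it yields a primitive $\sigma\in\Omega^1(U_x,\log(Z_K\times\CC))$ with $d\sigma = \omega-\omega_0$ and $\sigma|_K = 0$; averaging $\sigma$ over $T^n$ makes it invariant without disturbing these properties. Defining $X_t$ by $i_{X_t}\omega_t = -\sigma$ via the nondegeneracy of $\omega_t$ on the logarithmic tangent bundle, $X_t$ is a $T^n$-invariant section of $TM(-\log Z)$, hence tangent to $Z$, and it vanishes on $K$. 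After shrinking $U_x$ its flow $\phi_t$ is defined for $t\in[0,1]$, is $T^n$-equivariant, fixes $K$ pointwise, preserves $Z$, and satisfies $\tfrac{d}{dt}\phi_t^*\omega_t = \phi_t^*\big(d\,i_{X_t}\omega_t + (\omega-\omega_0)\big) = \phi_t^*(-d\sigma + d\sigma) = 0$, so $\phi_1^*\omega = \omega_0$. Composing $\phi_1$ with the tubular neighbourhood identification produces the asserted isomorphism of logarithmic symplectic $T^n$-manifolds.

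The main obstacle is the relative logarithmic Poincar\'e lemma invoked above: one must exhibit a primitive that is at once a genuine logarithmic $1$-form (no worse than simple poles along $Z_K\times\CC$) and flat along $K$, and that survives $T^n$-averaging. What makes this possible is precisely that the retraction onto $K$ is carried out in the $\CC$-fibre directions, which meet every component of $Z$ transversely, so the usual fibrewise homotopy operator stays within the logarithmic de Rham complex; the remaining ingredients --- openness of nondegeneracy, tangency of $X_t$ to $Z$, equivariance after averaging, and the Moser identity --- are routine.
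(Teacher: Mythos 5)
Your argument is correct and reaches the same conclusion by the same overall skeleton (transversality of $K$ to the strata of $Z$, a $T^n$-equivariant product neighbourhood, and a Moser deformation from $\omega$ to the split model), but the technical heart differs from the paper's proof. The paper produces the primitive of $\omega_0-\omega_1$ cohomologically: it shrinks $U_x$ until the Mazzeo--Melrose decomposition gives $H^2(U_x,\log Z)\cong H^2(K\cap U_x,\log(K\cap Z))$, concludes $[\omega_0]=[\omega_1]$, and then corrects the resulting primitive $\tau'$ by an explicit closed logarithmic $1$-form so that it merely vanishes \emph{at the point} $x$ before averaging. You instead prove a relative logarithmic Poincar\'e lemma directly, via the fibrewise homotopy operator of the radial contraction $(k,z)\mapsto(k,sz)$; since this contraction moves only in the normal $\CC$-directions, which are transverse to every component of $Z_K\times\CC$, the operator stays inside the logarithmic complex and yields a primitive vanishing along \emph{all} of $K$. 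This buys you a Moser flow that fixes $K$ pointwise (slightly stronger and cleaner than the paper's normalization at the single point $x$) and avoids the cohomological computation altogether; it also makes explicit the $H$-weight argument showing that the splitting $TM(-\log Z)|_K=TK(-\log Z_K)\oplus N$ is $\omega$-orthogonal, which the paper leaves implicit in the assertion that the $H$-invariant subspace is symplectic. The only points to make sure you state carefully are the rescaling of the fibre coordinate by a function of $k$ to normalize $\omega|_N$ to $\tfrac{i}{2}dz\wedge d\bar z$ (with orientability fixing the sign), and the observation that $T^n$-averaging preserves both the logarithmic character of $\sigma$ and its vanishing along $K$ because $K$ and $Z$ are $T^n$-invariant; both are routine.
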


\begin{proof}
Let $Z_1, Z_2, \ldots, Z_\ell$ be components of $Z$. As shown in the proof of Lemma~\ref{lemma: unfolded blow-up}, $K$ is transverse to the intersection
$$
	\bigcap_{i =1, \ldots, \ell} Z_i.
$$
This implies that $K \cap Z$ is a normal crossing divisor in $K$. For every point $x\in K$, the log tangent space
$$
	T_x(M, - \log Z)
$$
is a symplectic vector space, and the $H$-invariant subspace
$$
	T_x(K, - \log (K \cap Z) )
$$
is a symplectic subspace. This shows that $K$ is a log symplectic submanifold.

Taking a smaller neighborhood, if necessary, we may assume that $U_x$ is contractible and
$$
	U_x \cong (U_x \cap K) \times D,
$$
where $D \subset \CC$ is an open disk. We use the Moser method to show that $(U_x, \omega)$ is equivariantly symplectomorphic to $(U_x \cap K, \omega_K) \times (D, \sigma)$, where $\omega_K$ is the induced log symplectic structure on $U_x \cap K$ and $D \subset \CC$ is equipped with the symplectic structure $\sigma = \frac{i}{2}dz\wedge d\overline{z}$ and the standard $S^1$-action. We write
$$
	\omega_t \coloneqq (1-t)\omega + t (\omega_K \oplus \sigma), \qquad 0\leq t \leq 1.
$$
Because $K \pitchfork (Z_i \cap Z_j)$, for small enough $U_x$, we have
$$
	\bigoplus_{i, j} H^0(U_x \cap Z_i \cap Z_j) = \bigoplus_{i, j} H^0(K \cap U_x \cap Z_i \cap Z_j),
$$
and therefore
$$
	H^2(U_x, \log Z) = H^2(K \cap U_x, \log (K \cap Z)).
$$
This implies $[\omega_0] = [\omega_t] \in H^2(U_x, \log (U_x \cap Z))$, and so
$$
	\omega_0 - \omega_1 = t d \tau'
$$
for some $\tau' \in \Omega^1(U_x, \log (U_x \cap Z))$.

Choose a chart centered at $x$ such that
\begin{equation} \label{eq: 42}
	\tau' |_x = \sum_{i = 1}^\ell c_i \frac{d x_i}{x_i} + \sum_{i = \ell+1}^{2n} c_i d x_i.
\end{equation}
We use the right hand side of \eqref{eq: 42} to define a closed logarithmic 1-form $\tau_0$ on $U_x$. Consequently,
$$
	\omega_0 - \omega_1 = t d \tau' = t d (\tau' - \tau_0) = t d \tau,
$$
where $\tau \coloneqq \tau' - \tau_0$ has the property that $\tau |_x = 0$. We average $\tau$ over $T^n$, getting a new map, also called $\tau$,
which is $T^n$-equivariant. The usual Moser method yields the desired result.
\end{proof}

By Lemma~\ref{lemma: normal form}, we deduce that $K$ is a compact orientable log symplectic submanifold of dimension $2n-2$ with an induced effective action by the torus $T^n / H$, so we obtain the following result by induction on dimension.

\begin{prop} \label{prop: normal form}
Let $(M,Z,\omega)$ be a $2n$-dimensional compact orientable log symplectic manifold with $Z$ of normal crossing type, equipped with an effective 
$T^n$-action. Let $\{K_1,\ldots, K_m\}$ be the submaximal orbit type strata as described in \ref{codim 2 strata}. Then for $1 \leq i_1, i_2, \ldots, i_k \leq m$, where $i_j$'s are distinct,
$$
	N = K_{i_1} \cap K_{i_2} \cap \ldots \cap K_{i_k}
$$
is a log symplectic submanifold with the induced action by the torus
$$
	T^n ~/~ H_{i_1} H_{i_2} \cdots H_{i_k}.
$$
For a point $x \in N$, there exists a $T^n$-invariant neighborhood $U_x$ that is equivariantly isomorphic to a neighborhood of
\begin{equation}
	(x, 0) \in (N \cap U_p) \times \CC^k,
\end{equation}
where $\CC^k$ is equipped with the symplectic structure
$$
	\frac{i}{2} \left( dz_1\wedge d\overline{z}_1 + dz_2\wedge d\overline{z}_2 + \ldots + dz_k\wedge d\overline{z}_k \right)
$$
and each $H_{i_p}$ acts only on the $p^{th}$ factor of $\CC^k$ by the standard $S^1$-action.
\end{prop}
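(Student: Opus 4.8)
The plan is to proceed by induction on $n$ (equivalently, on $\dim M$), using Lemma~\ref{lemma: normal form} as the base case. If only one stratum is intersected ($k=1$) the statement is exactly Lemma~\ref{lemma: normal form} together with the observation recorded immediately after it: $K:=K_{i_1}$ is a compact orientable log symplectic submanifold of dimension $2n-2$ with $K\cap Z$ of normal crossing type, carrying an effective action of the torus $T^n/H_{i_1}$, and near each $x\in K$ a $T^n$-invariant neighbourhood of $x$ in $M$ is equivariantly and log-symplectically modelled on $(x,0)\in(K\cap U_x)\times\CC$, with form $\frac{i}{2}dz_1\wedge d\overline{z}_1$ on $\CC$ and $H_{i_1}$ acting there by the standard weight. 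Since the ambient dimension strictly drops in the inductive step below, this also serves as the base of the induction.

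For the inductive step, set $K:=K_{i_1}$ and $N:=K\cap K_{i_2}\cap\cdots\cap K_{i_k}$, and fix $x\in N$. Applying the equivariant slice theorem exactly as in the proof of Lemma~\ref{lemma: unfolded blow-up}, a $T^n$-invariant neighbourhood of $x$ in $M$ is equivariantly isomorphic to a neighbourhood of $(x,0)\in(N\cap U_x)\times\CC^k$ on which $H_{i_p}$ acts by the standard weight on the $p$-th $\CC$-factor and trivially on the other factors and on $N\cap U_x$. In particular $K_{i_p}=M^{H_{i_p}}$ is locally $(N\cap U_x)\times\bigoplus_{q\ne p}\CC$, so for $j\ge2$ the image of $H_{i_j}$ in $T^n/H_{i_1}$ is a nontrivial circle and $K_{i_j}\cap K$ is a codimension-$2$ component of its fixed-point set in $K$; moreover $H_{i_1}$ fixes $K=K_{i_1}$ pointwise, while each $H_{i_j}$ with $j\ge2$ acts trivially on the $z_1$-direction. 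This decoupling of the isotropy weights is exactly what makes the two product normal forms below compatible.

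Now apply the base case (Lemma~\ref{lemma: normal form}) to $(M,Z,\omega)$ with the circle $H_{i_1}$: this gives the model of a neighbourhood of $x$ in $M$ as $(K\cap U_x)\times\CC$ described above, and exhibits $(K,\,K\cap Z,\,\omega|_K)$ as a compact orientable log symplectic $(2n-2)$-manifold of normal crossing type with effective $T^n/H_{i_1}$-action. By the inductive hypothesis applied to $K$ with the submaximal strata $\{K_{i_j}\cap K\}_{j\ge2}$, the intersection $N=\bigcap_{j\ge2}(K_{i_j}\cap K)$ is a log symplectic submanifold of $K$, inherits an effective action of $(T^n/H_{i_1})/(H_{i_2}\cdots H_{i_k})=T^n/H_{i_1}\cdots H_{i_k}$, and near $x$ a $T^n$-invariant neighbourhood of $x$ in $K$ is equivariantly and log-symplectically modelled on $(x,0)\in(N\cap U_x)\times\CC^{k-1}$ with form $\frac{i}{2}\sum_{p=2}^k dz_p\wedge d\overline{z}_p$ and $H_{i_j}$ acting by the standard weight on its own factor. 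Feeding this decomposition of $K$ into the model $M\approx K\times\CC$ — which is $T^n$-equivariant, splits the log symplectic form orthogonally, carries $H_{i_1}$ trivially on the $\CC^{k-1}$ and each $H_{i_j}$ ($j\ge2$) trivially on the extra $\CC$ summand, by the previous paragraph — produces a $T^n$-equivariant, log-symplectic identification of a neighbourhood of $x$ in $M$ with a neighbourhood of $(x,0)$ in $(N\cap U_x)\times\CC^k$, on which the form is $\omega|_N\oplus\frac{i}{2}\sum_{p=1}^k dz_p\wedge d\overline{z}_p$ and $H_{i_p}$ acts by the standard weight on the $p$-th factor; relabelling gives the asserted normal form. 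Finally $N$ is closed in the compact $M$, hence compact; its normal bundle in $M$ is the complex bundle $\CC^k$, which is canonically oriented, so $N$ is orientable; and $N$ is a log symplectic submanifold of $M$ because it is one of $K$ and $K$ is one of $M$, transversality to the intersections of components of $Z$ being transitive and nondegeneracy of $\omega$ on the intersection being visible in the normal form. This completes the induction.

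The main obstacle is the equivariant bookkeeping at the end of the inductive step: one must verify that the isotropy weights genuinely decouple along the nested intersections — that $H_{i_1}$ acts trivially on the normal directions already split off inside $K$, and that each $H_{i_j}$ keeps acting with the standard weight on precisely one $\CC$-factor and trivially on the others — so that the product normal form for $N\subset K$ (from the inductive hypothesis) and the one for $K\subset M$ (from Lemma~\ref{lemma: normal form}) compose cleanly into the $\CC^k$ model. This is precisely what the slice-theorem computation underlying Lemma~\ref{lemma: unfolded blow-up} supplies; marrying that equivariant picture with the Moser-theoretic symplectic normal form of Lemma~\ref{lemma: normal form}, so that the induction carries both structures simultaneously, is the only point that requires care, the rest being routine.
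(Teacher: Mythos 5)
Your proof is correct and follows essentially the same route as the paper, which simply notes that Lemma~\ref{lemma: normal form} exhibits $K_{i_1}$ as a compact orientable log symplectic $(2n-2)$-manifold with an effective $T^n/H_{i_1}$ action and then concludes "by induction on dimension." You have merely written out the equivariant bookkeeping that the paper leaves implicit.
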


\begin{example} \label{example: local}
	We continue with Example~\ref{example: local blow-up}.  Equip $M=\RR^2$ with the standard symplectic form ($\sigma=\tfrac{i}{2}dz\wedge d\bar z$, in 
complex coordinates)  which is invariant by rotations of the plane. The blow-up at the origin then has induced presymplectic structure $rdr\wedge d\theta$, while the compressed blow-up is endowed with the symplectic structure $\widetilde{\sigma} = {d} t \wedge {d} \theta$ in standard coordinates $(t,\theta)$ for $\widetilde{M}_K = [0,1)\times S^1$. The Hamiltonian function $t$ generates the induced $S^1$-action on 
$\widetilde{M}_K$. Finally, the symplectic cut of $(\widetilde{M}_K, \widetilde\sigma )$ with respect to the function $t$ is equivariantly isomorphic to $(M, \sigma)$.
\end{example}

\subsubsection{Symplectic uncut}

We use Proposition \ref{prop: normal form} to show that there is an induced $T^n$-invariant log symplectic structure on the compressed blow-up $\widetilde{M}$. By Lemma \ref{lemma: unfolded blow-up}, $\widetilde{M}$ defines a log symplectic principal $T^n$-bundle, which we call the 
symplectic uncut (see Definition \ref{defn: sympl uncut} below).

\begin{prop} \label{prop: sympl uncut}
	Let $(M,Z,\omega)$ be a $2n$-dimensional compact orientable log symplectic manifold with corners, where $Z$ is of normal crossing type, equipped with an effective $T^n$-action. Let $\Delta = M / T^n$ and 
	$D = Z / T^n$.

Then the iterated compressed blow-up $(\widetilde{M}, \widetilde{Z})$ in Lemma \ref{lemma: unfolded blow-up}, as a principal $T^n$-bundle over 
$(\Delta, D)$, is endowed with a unique $T^n$-invariant log symplectic structure $\widetilde{\omega}$ such that the pull back of $\widetilde{\omega}$ to $\blowup{M}{K}$ coincides with the pull back of $\omega$ to $\blowup{M}{K}$, i.e., we have $p^*\omega = q^*\widetilde{\omega}$ in diagram~\eqref{eq: 56}.
\end{prop}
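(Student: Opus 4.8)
The plan is to define $\widetilde{\omega}$ away from the exceptional divisor simply by transporting $\omega$, and then to check --- using the local normal form of Proposition~\ref{prop: normal form} --- that it extends across $\widetilde{E}$ to a logarithmic symplectic form for $\widetilde{Z}$. Several properties are formal. The folding map $q\colon \blowup{M}{K}\to\widetilde{M}$ is a homeomorphism restricting to a diffeomorphism $\blowup{M}{K}\setminus E\to\widetilde{M}\setminus\widetilde{E}$, and since $\widetilde{M}\setminus\widetilde{E}$ is dense, $q^*$ is injective on continuous differential forms of every degree; hence $\widetilde{\omega}$ is uniquely determined by the requirement $q^*\widetilde{\omega}=p^*\omega$, it is closed because $q^*d\widetilde{\omega}=d\,p^*\omega=p^*d\omega=0$, and it is $T^n$-invariant because $\omega$ is and $p,q$ are $T^n$-equivariant by Lemma~\ref{lemma: unfolded blow-up}. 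Since $p$ restricts to a diffeomorphism $\blowup{M}{K}\setminus E\to M\setminus K$, the composite $q\circ p^{-1}$ is a diffeomorphism of $M\setminus K$ onto $\widetilde{M}\setminus\widetilde{E}$, and we define $\widetilde{\omega}$ on $\widetilde{M}\setminus\widetilde{E}$ as the pushforward of $\omega|_{M\setminus K}$; there it is manifestly log symplectic for $\widetilde{Z}$, and $q^*\widetilde{\omega}=p^*\omega$ holds on $\blowup{M}{K}\setminus E$. It remains to extend $\widetilde{\omega}$ across $\widetilde{E}$; by the density argument such an extension is unique if it exists, so it patches to a global form, and the problem becomes local around points of the strata $N=K_{i_1}\cap\cdots\cap K_{i_k}$.

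Fix $x\in N$ and apply Proposition~\ref{prop: normal form}: a $T^n$-invariant neighbourhood of $x$ is equivariantly isomorphic to a neighbourhood of $(x,0)$ in $(N\cap U_x)\times\CC^k$ with the product log symplectic form $\omega_N\oplus\frac{i}{2}\sum_{p=1}^k dz_p\wedge d\bar{z}_p$, where $\omega_N$ is the induced log symplectic structure on $N$, with $Z$ corresponding to $(Z\cap N)\times\CC^k$ and each circle $H_{i_p}$ acting only on the $p$-th $\CC$ factor. In this chart the iterated compressed blow-up along the $K_i$ is the product of $N\cap U_x$ with the factorwise compressed blow-up of $\CC^k$ along its coordinate hyperplanes, which by Example~\ref{example: local} is $(N\cap U_x)\times\RR_+^k\times T^k$ with coordinates $(t_p,\theta_p)$. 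The pullback of $\omega$ to the ordinary blow-up is $\omega_N\oplus\sum_p r_p\,dr_p\wedge d\theta_p$ in polar coordinates --- a smooth $2$-form, logarithmic for the proper transform of $Z$, degenerate along each $E_p$ --- and the change of variables $t_p=\tfrac{1}{2} r_p^2$ of the compression turns $r_p\,dr_p$ into $dt_p$, so it descends to $\omega_N\oplus\sum_p dt_p\wedge d\theta_p$. This is the local expression of $\widetilde{\omega}$: it is smooth, it is a logarithmic $2$-form for $\widetilde{Z}$ (which in this chart involves only the $Z\cap N$ directions, so $\partial_{t_p}$ and $\partial_{\theta_p}$ are honest generators of $T\widetilde{M}(-\log\widetilde{Z})$), and it is nondegenerate since $\omega_N$ is log symplectic on $N\cap U_x$ and $\sum_p dt_p\wedge d\theta_p$ is symplectic on $\RR_+^k\times T^k$. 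This provides the required extension and finishes the proof.

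The main obstacle is precisely this last point: \emph{a priori} $p^*\omega$ descends through the compression only to a smooth closed $2$-form, so one must check that it becomes genuinely nondegenerate across $\widetilde{E}$ rather than remaining degenerate there. This is where Proposition~\ref{prop: normal form} is indispensable, since it shows that near the circle fixed loci $\omega$ is a product with no cross terms between the blown-up $\CC^k$ directions and the base $N$, reducing the verification to the one-variable computation of Example~\ref{example: local}.
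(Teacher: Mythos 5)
Your proposal is correct and follows essentially the same route as the paper: both reduce to the local normal form of Proposition~\ref{prop: normal form} and the computation of Example~\ref{example: local}, where the compression $t_p=\tfrac12 r_p^2$ converts $r_p\,dr_p\wedge d\theta_p$ into the nondegenerate form $dt_p\wedge d\theta_p$. Your additional remarks on uniqueness via density of $\widetilde{M}\setminus\widetilde{E}$ and on the patching of local extensions make explicit what the paper leaves implicit, but the substance of the argument is identical.
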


\begin{proof}
For a point
$$
	x \in N = K_{i_1} \cap K_{i_2} \cap \ldots \cap K_{i_k} \subset M,
$$
let $U_x \subset M$ be a $T^n$-invariant neighborhood that is isomorphic to a neighborhood of
$$
	(x, 0) \in (N \cap U_x) \times \CC^k.
$$
as in Proposition \ref{prop: normal form}. In the proof of Lemma \ref{lemma: unfolded blow-up}, $q (p^{-1}(U_x)) \subset \widetilde{M}$ is equivariantly isomorphic to a neighborhood of 
$$
	(x, 0) \times T^k \subset (N \cap U_x ) \times \left(\RR_+ \times S^1 \right)^k,
$$
where $\RR_+ = [0, \infty)$. In these coordinates, the $T^n$-invariant symplectic form
$$
	\widetilde{\omega} = \omega_N \oplus \left( d t_1 \wedge d\theta_1 + d t_2 \wedge d\theta_2 + \ldots + d t_k\wedge d\theta_k \right)
$$
satisfies $p^*\omega = q^*\widetilde{\omega}$.
\end{proof}

\begin{defn} \label{defn: sympl uncut}
Let $(M,Z,\omega)$ be a compact orientable log symplectic manifold with corners with $Z$ of normal crossing type, equipped with an effective 
$T^n$-action. The triple $(\widetilde{M}, \widetilde{Z}, \widetilde{\omega})$, as a log symplectic principal $T^n$-bundle over $(\Delta, D)$, is called the \textbf{symplectic uncut} of $(M, Z, \omega)$.
\end{defn}

\subsection{Logarithmic de Rham cohomology}\label{Mazzeo-Melrose}

\begin{pp}
Let $M$ be a smooth manifold and $Z$ a codimension $1$ closed hypersurface
of $M$.
Let $TM(-\log Z)$ be the log tangent algebroid. Then
  the residue exact sequence of de Rham complexes
\[
\xymatrix{0\ar[r]& \Omega^k (M)\ar[r]& \Omega^k(M,\log Z)\ar[r]^-{\Res} & \Omega^{k-1} (Z) \ar[r] & 0}
\]
smoothly splits as a sequence of complexes, giving a natural decomposition of cohomology groups  
\begin{equation}
  \label{eq:42}
  H^k(M,\log Z)\cong  H^k(M)\oplus H^{k-1}(Z),
\end{equation}
recovering the well-known isomorphism of Mazzeo-Melrose~\cite{MR1734130}.
\end{pp}

\begin{pp}
  If $Z$ is the degeneracy locus of a log symplectic structure, then
  $Z$ has a foliation $F$ by codimension 1 symplectic leaves, and the
  Poisson Lie algebroid is isomorphic to $TM(-\log Z,F)$, the sheaf of
  vector fields tangent to $Z$ and to $F$.  A local computation shows
  that the morphism of complexes dualizing the morphism of algebroids
  $TM(-\log Z, F)\to TM(-\log Z)$, namely,
  \[
  \Omega^\bullet(M, \log Z)\to \Omega^{\bullet}(M, \log Z,F),
  \]
  is a quasi-isomorphism\footnote{A quasi-isomorphism of chain complexes is a morphism between the complexes that induces an isomorphism of their homologies.} and so the Poisson cohomology coincides with
  the logarithmic cohomology of the hypersurface $Z$.
\end{pp}

\begin{pp}
  If there are several smooth hypersurfaces $Z_1,\ldots,
  Z_k$ which intersect in a normal crossing fashion, for example the
  boundary of a manifold with corners, the Mazzeo-Melrose theorem
  easily generalizes to give (for the free divisor $Z = Z_1+\ldots + Z_k$)
  \begin{equation}
  H^k(M,\log Z) \cong H^k(M) \oplus \sum_i H^{k-1}(Z_i) \oplus \sum_{i<j}
  H^{k-2}(Z_i\cap Z_j)\oplus \cdots.
  \end{equation}
Furthermore, if $Z$ is the degeneracy locus of a log symplectic structure $\omega \in \Omega^2(M, \log Z)$, 
  then the corresponding Poisson structure $\pi = \omega^{-1} \in C^\infty(M, \wedge^2 TM(-\log Z))$ defines a morphism of Lie algebroids
$$
	\pi: T^* M \to TM(-\log Z), \qquad \alpha \mapsto i_\omega \pi,
$$
which dualizes to a quasi-isomorphism of cochain complexes:
$$
	(\Omega^\bullet(M, \log Z), d)\to (\mathfrak{X}^{\bullet}(M), [\pi, \cdot]).
$$
It follows that the Poisson cohomology is isomorphic to the logarithmic de Rham cohomology of the hypersurface arrangement. This allows a computation of the Poisson cohomology by way of the Mazzeo-Melrose isomorphism.
\end{pp}

\subsection{Poisson toric variety and the tropicalization map} \label{sec: CaineToric}

We outline here the relationship between the tropical momentum map introduced in this paper and the notion of extended tropicalization which exists in tropical geometry. To see the connection, we use a canonical real Poisson structure on any smooth complex toric variety introduced by Caine \cite{MR2859234}.

\begin{pp}

First, we recall the construction of Caine's Poisson toric variety. Let 
$\mathfrak{t}^n$ be the Lie algebra of the real $n$-torus $T^n$.
Let $\Sigma$ be a complete simplicial fan with $d$ 1-dimensional cones. We assume that $\Sigma$ is dual to a Delzant polytope, so that the resulting toric variety is smooth. We label each 1-dimensional cone with its integral generating vector $v_i \in \mathfrak{t}^n$, 
$i=1, \ldots, d$. The linear projection
defined by 
$\mathfrak{t}^d\ni e_i \mapsto v_i \in \mathfrak{t}^n$ for 
$i = 1, \ldots, d$, where $\{e_1, \ldots, e_d\}$ is the standard
basis in $\mathfrak{t}^d \cong \mathbb{R}^d$, induces the short exact sequence of Lie algebras
$$
	0 \longrightarrow \mathfrak{n} \longrightarrow \mathfrak{t}^d \longrightarrow \mathfrak{t}^n \longrightarrow 0.
$$

Now let $T_\CC^d$ be the complex torus with the standard action on $\CC^d$, and let $N$ be the $(d-n)$-dimensional torus integrating the Lie algebra $\mathfrak{n}$. The complex subtorus $N_\CC \subset T_\CC^d$ acts effectively on $\CC^d$, and there is an open dense orbit stratum $U \subset \CC^d$ such that $N_\CC$ acts freely. The compact holomorphic toric variety $\mathcal{X}$ of the simplicial fan $\Sigma$ is the quotient
$$
	\mathcal{X} = U / N_\CC,
$$
which is equipped with the residual $T_\CC^n$-action.

Now, if we consider the real Poisson structure
$$
	\Pi = \sum_{k=1}^d i \overline{z}_k \frac{\partial}{\partial \overline{z}_k} \wedge z_k \frac{\partial}{\partial z_k}
$$
on $\CC^d$, which is invariant under the $N_\CC$-action, then $\Pi$ descends to a Poisson structure $\pi$ on $\mathcal{X}$. The Poisson manifold $(\mathcal{X}, \pi)$ is what Caine called Poisson toric variety. Caine proved that $\pi$ is generically non-degenerate and has quadratic degeneracy along the submaximal orbits.

Because the simplicial fan $\Sigma$ is due to a Delzant polytope, it means that each $n$-dimensional cone of $\Sigma$ can be mapped to the standard positive orthant of $\mathfrak{t}^n$ by an automorphism of the integral lattice $\ZZ^n \subset \mathfrak{t}^n$. If  the standard positive orthant of $\mathfrak{t}^n$ is indeed a cone of $\Sigma$, then the Caine Poisson structure $\pi$ takes the form
$$
	\pi = \sum_{k=1}^n i \overline{Z}_k \wedge Z_k,
$$
where $\left\{Z_1, Z_2, \ldots, Z_n \right \}$ are the holomorphic vector fields on $\mathcal{X}$ that generate the action of $T^n_\CC \cong \left(\CC^* \right)^n$. 
\end{pp}

\begin{pp}
Our key observation is that if we carry out a real oriented blow-up along the submaximal orbits of $\mathcal{X}$, then the blow-up space is a smooth manifold with corners $\widetilde{\mathcal{X}}$, and $\pi$ lifts to a Poisson structure $\widetilde{\pi}$ on $\widetilde{\mathcal{X}}$ such that $\widetilde{\omega} = \widetilde{\pi}^{-1}$ is log symplectic and degenerates along the boundary $\partial \widetilde{\mathcal{X}}$. Furthermore, the induced $T^n$-action on $\widetilde{\mathcal{X}}$ is free, proper, and Lagrangian, so by Theorem~\ref{delzant}, the quotient $X = \widetilde{\mathcal{X}} / T^n$ is equipped with a log affine structure $\xi$ which degenerates along the boundary $\partial X$, and the quotient map
\begin{equation} \label{eq: trop}
	\mu: \left(\widetilde{\mathcal{X}}, \partial \widetilde{\mathcal{X}}, \widetilde{\omega}\right) \to (X, \partial X, \xi).
\end{equation}
is a tropical momentum map as in Definition~\ref{torlog}. By Proposition~\ref{prop: semi-local}, the log affine manifold $(X, \partial X, \xi)$ defines a labeled simplicial fan, which is simply the labeled simplicial fan $\Sigma$ of the toric variety $\mathcal{X}$.
\end{pp}

\begin{pp}
Now let $U_\mathcal{X} \cong T_\CC^n$ be the open dense orbit of $\mathcal{X}$. We have the natural tropicalization map $\nu: U_\mathcal{X} \to \mathfrak{t}^n$. Upon choosing a basis of $\mathfrak{t}^n$, the tropicalization map, see, e.g., Definition 1.1 of \cite{MR2102998}, takes the form
$$
	\nu: U_\mathcal{X} \cong (\CC^*)^n \to \mathfrak{t}^n \cong \RR^n, \qquad (z_1, \ldots, z_n) \mapsto (\log |z_1|, \ldots, \log |z_n|).
$$
The extended tropicalization map introduced by Kajiwara~\cite{MR2428356} and Payne~\cite{MR2511632} is the natural extension:
$$
	\nu: \mathcal{X} \to \mathbf{Trop}(\mathcal{X}),
$$
where the codomain $\mathbf{Trop}(\mathcal{X})$, called the 
\textit{tropicalization} of $\mathcal{X}$, is a natural compactification of $\mathfrak{t}^n$. From our perspective, if we regard the the simplicial fan $\Sigma$ of $\mathcal{X}$ as an admissible decorated cubic complex, then $\mathbf{Trop}(\mathcal{X})$ is nothing but the log affine manifold $X$, which is constructed from $\Sigma$ by the welding procedure in Proposition~\ref{prop: grandweld}.

We then have the following commutative diagram:
\begin{equation} \label{eq: trop triangle}
	\xymatrix{
		\widetilde{\mathcal{X}} \ar[dr]_-{\mu} \ar[r]^-{p}& \mathcal{X} \ar[d]^-{\nu} \\
		& X \cong \mathbf{Trop}(\mathcal{X}),
	}
\end{equation}
where $p: \widetilde{\mathcal{X}} \to \mathcal{X}$ is the blow-down map.

\begin{example}
The complex projective space $\CC P^1$ may be thought as the quotient of $\CC^2$ by the diagonal $\CC^*$-action. The residual anti-diagonal 
$\CC^*$-action renders $\CC P^1$ into a toric variety. If we use the homogeneous coordinates $z_1$ and $z_2$ on $\CC P^1$ with $z_1 = \frac{1}{z_2}$, then
$$
	\pi =  i \overline{z}_1 \frac{\partial}{\partial \overline{z}_1} \wedge z_1 \frac{\partial}{\partial z_1} = i \overline{z}_2 \frac{\partial}{\partial \overline{z}_2} \wedge z_2 \frac{\partial}{\partial z_2}.
$$
Let $\widetilde{\mathcal{X}}$ be the blow-up of $\CC P^1$ along the two points $z_1 = 0$ and $z_2 = 0$. We cover $\widetilde{\mathcal{X}}$ by the charts
$$
	\begin{aligned}
		(r_k = |z_k|, \theta_k = \arg z_k), \qquad r_k \geq 0, \quad \theta_k \in \RR / 2\pi \ZZ, \quad k = 1, 2,\\
	\end{aligned}
$$
the maps in the commutative diagram \eqref{eq: trop triangle} are as follows:
$$
	\begin{aligned}
		& p: \widetilde{\mathcal{X}} \to \CC P^1, \qquad (r_k, \theta_k) \mapsto z_k = r_k e^{i\theta_k}, \quad k = 1, 2; \\
		& \nu: \CC P^1 \to X, \qquad z_k \mapsto r_k = |z_k|, \quad k = 1, 2; \\
		& \mu: \widetilde{\mathcal{X}} \to X, \qquad (r_k, \theta_k) \mapsto r_k, \quad k = 1, 2.
	\end{aligned}
$$
\end{example}

\end{pp}

\bibliographystyle{hyperamsplain} 
\bibliography{TropicMoment} 

\providecommand{\bysame}{\leavevmode\hbox to3em{\hrulefill}\thinspace}
\providecommand{\MR}{\relax\ifhmode\unskip\space\fi MR }
\providecommand{\MRhref}[2]{%
  \href{http://www.ams.org/mathscinet-getitem?mr=#1}{#2}
}
\providecommand{\href}[2]{#2}
\begin{thebibliography}{10}

\bibitem{MR642416}
M.~F. Atiyah, \emph{Convexity and commuting {H}amiltonians},
  \href{http://dx.doi.org/10.1112/blms/14.1.1}{Bull. London Math. Soc.
  \textbf{14} (1982)}, no.~1, 1--15.

\bibitem{MR2859234}
A.~Caine, \emph{Toric {P}oisson structures}, Mosc. Math. J. \textbf{11} (2011),
  no.~2, 205--229, 406.

\bibitem{MR1748286}
A.~Cannas~da Silva, V.~Guillemin, and C.~Woodward, \emph{On the unfolding of
  folded symplectic structures},
  \href{http://dx.doi.org/10.4310/MRL.2000.v7.n1.a4}{Math. Res. Lett.
  \textbf{7} (2000)}, no.~1, 35--53.

\bibitem{MR998124}
T.~J. Courant, \emph{Dirac manifolds},
  \href{http://dx.doi.org/10.1090/S0002-9947-1990-0998124-1}{Trans. Amer. Math.
  Soc. \textbf{319} (1990)}, no.~2, 631--661.

\bibitem{MR984900}
T.~Delzant, \emph{Hamiltoniens p\'eriodiques et images convexes de
  l'application moment}, Bull. Soc. Math. France \textbf{116} (1988), no.~3,
  315--339.

\bibitem{MR2289207}
M.~Einsiedler, M.~Kapranov, and D.~Lind, \emph{Non-{A}rchimedean amoebas and
  tropical varieties}, \href{http://dx.doi.org/10.1515/CRELLE.2006.097}{J.
  Reine Angew. Math. \textbf{601} (2006)}, 139--157.

\bibitem{GLP1}
M.~Gualtieri, S.~Li, and B.~Pym, \emph{The Stokes groupoids},
  \href{http://arxiv.org/abs/arXiv:1305.7288}{{\tt arXiv:1305.7288}}.

\bibitem{MR3100779}
M.~Gualtieri and B.~Pym, \emph{Poisson modules and degeneracy loci},
  \href{http://dx.doi.org/10.1112/plms/pds090}{Proc. Lond. Math. Soc. (3)
  \textbf{107} (2013)}, no.~3, 627--654.

\bibitem{MR664117}
V.~Guillemin and S.~Sternberg, \emph{Convexity properties of the moment
  mapping}, \href{http://dx.doi.org/10.1007/BF01398933}{Invent. Math.
  \textbf{67} (1982)}, no.~3, 491--513.

\bibitem{MR1929136}
V.~Guillemin, V.~Ginzburg, and Y.~Karshon,
  \href{http://dx.doi.org/10.1090/surv/098}{\emph{Moment maps, cobordisms, and
  {H}amiltonian group actions}}, Mathematical Surveys and Monographs, vol.~98,
  American Mathematical Society, Providence, RI, 2002. Appendix J by Maxim
  Braverman.

\bibitem{MR3250302}
V.~Guillemin, E.~Miranda, and A.~R. Pires, \emph{Symplectic and {P}oisson
  geometry on {$b$}-manifolds},
  \href{http://dx.doi.org/10.1016/j.aim.2014.07.032}{Adv. Math. \textbf{264}
  (2014)}, 864--896.

\bibitem{Guillemin3}
V.~Guillemin, E.~Miranda, A.~R. Pires, and G.~Scott, \emph{Toric actions on
  {b}--symplectic manifolds},
  \href{http://arxiv.org/abs/arXiv:1309.1897v3}{{\tt arXiv:1309.1897v3}}.

\bibitem{MR0375320}
W.~Heil, \emph{Elementary surgery on {S}eifert fiber spaces}, Yokohama Math. J.
  \textbf{22} (1974), 135--139.

\bibitem{MR1344739}
G.~Hetyei, \emph{On the {S}tanley ring of a cubical complex},
  \href{http://dx.doi.org/10.1007/BF02570709}{Discrete Comput. Geom.
  \textbf{14} (1995)}, no.~3, 305--330.

\bibitem{MR2428356}
T.~Kajiwara, \href{http://dx.doi.org/10.1090/conm/460/09018}{\emph{Tropical
  toric geometry}}, Toric topology, Contemp. Math., vol. 460, Amer. Math. Soc.,
  Providence, RI, 2008, pp.~197--207.

\bibitem{Karshon1}
Y.~Karshon and E.~Lerman, \emph{Non-compact symplectic toric manifolds},
  \href{http://dx.doi.org/10.3842/SIGMA.2015.055}{SIGMA Symmetry Integrability
  Geom. Methods Appl. \textbf{11} (2015)}, Paper 055, 37.

\bibitem{MR0364552}
B.~Kostant, \emph{On convexity, the {W}eyl group and the {I}wasawa
  decomposition}, Ann. Sci. \'Ecole Norm. Sup. (4) \textbf{6} (1973), 413--455
  (1974).

\bibitem{MR1338784}
E.~Lerman, \emph{Symplectic cuts},
  \href{http://dx.doi.org/10.4310/MRL.1995.v2.n3.a2}{Math. Res. Lett.
  \textbf{2} (1995)}, no.~3, 247--258.

\bibitem{MR1734130}
R.~Mazzeo and R.~B. Melrose, \emph{Pseudodifferential operators on manifolds
  with fibred boundaries}, Asian J. Math. \textbf{2} (1998), no.~4, 833--866,
  \href{http://arxiv.org/abs/arXiv:math/9812120v1}{{\tt arXiv:math/9812120v1}}.

\bibitem{MR1707526}
J.~D. McCarthy, \emph{On the asphericity of a symplectic {$M^3\times S^1$}},
  \href{http://dx.doi.org/10.1090/S0002-9939-00-05571-4}{Proc. Amer. Math. Soc.
  \textbf{129} (2001)}, no.~1, 257--264.

\bibitem{Meinrenken1}
E.~Meinrenken, \emph{Symplectic geometry lecture notes},
  \url{http://www.math.toronto.edu/mein/teaching/sympl.pdf}.

\bibitem{MR2102998}
G.~Mikhalkin, \href{http://dx.doi.org/10.1007/0-306-48658-X_6}{\emph{Amoebas of
  algebraic varieties and tropical geometry}}, Different faces of geometry,
  Int. Math. Ser. (N. Y.), vol.~3, Kluwer/Plenum, New York, 2004, pp.~257--300.

\bibitem{MR2511632}
S.~Payne, \emph{Analytification is the limit of all tropicalizations},
  \href{http://dx.doi.org/10.4310/MRL.2009.v16.n3.a13}{Math. Res. Lett.
  \textbf{16} (2009)}, no.~3, 543--556.

\bibitem{MR2801777}
{\'A}.~Pelayo and S.~V{\~u}~Ng{\d{o}}c, \emph{Symplectic theory of completely
  integrable {H}amiltonian systems},
  \href{http://dx.doi.org/10.1090/S0273-0979-2011-01338-6}{Bull. Amer. Math.
  Soc. (N.S.) \textbf{48} (2011)}, no.~3, 409--455.

\bibitem{Pym:2015fk}
B.~Pym, \emph{Elliptic singularities on log symplectic manifolds and
  Feigin--Odesskii Poisson brackets},
  \href{http://arxiv.org/abs/1507.05668}{{\tt 1507.05668}}.

\bibitem{MR586450}
K.~Saito, \emph{Theory of logarithmic differential forms and logarithmic vector
  fields}, J. Fac. Sci. Univ. Tokyo Sect. IA Math. \textbf{27} (1980), no.~2,
  265--291.

\bibitem{Torres01062004}
D.~M. Torres, \emph{Global Classification of Generic Multi-Vector Fields of Top
  Degree}, \href{http://dx.doi.org/10.1112/S0024610703005143}{Journal of the
  London Mathematical Society \textbf{69} (2004)}, no.~3, 751--766.

\end{thebibliography}

\addresses

\end{document}